\newcommand{\R}{\mathbb{R}}
\newcommand{\CC}{\mathbb{C}}
\newcommand{\Q}{\mathbb{Q}}
\newcommand{\Z}{\mathbb{Z}}
\newcommand{\sE}{\mathcal{E}}
\newcommand{\sA}{\mathcal{A}}
\newcommand{\ddbar}{\partial\bar{\partial}}
\newcommand{\wh}{\widehat}
\newcommand{\wt}{\widetilde}
\newcommand{\cA}{\mathcal{A}}
\newcommand{\cF}{\mathcal{F}}
\newcommand{\cC}{\mathcal{C}}
\newcommand{\cG}{\mathcal{G}}
\renewcommand{\O}{\mathcal{O}}
\newcommand{\ep}{\varepsilon}
\renewcommand{\epsilon}{\varepsilon}
\newcommand{\Id}{\mathrm{Id}}
\renewcommand{\ker}{\mathrm{Ker} \,}
\newcommand{\ol}{\overline}
\renewcommand{\leq}{\leqslant}
\renewcommand{\geq}{\geqslant}
\newcommand{\Jac}{\mathrm{Jac}}
\newcommand{\codim}{\mathrm{codim}}
\newcommand{\Tr}{\mathrm{Tr}}
\newcommand{\ddc}{dd^c}
\newcommand{\Supp}{\mathrm {Supp}}
\newcommand{\Vol}{\mathrm{Vol}}
\newcommand{\End}{\mathrm{End}}
\newcommand{\Hom}{\mathrm{Hom}}
\newcommand{\rk}{\mathrm{rk}}
\newcommand{\Tor}{\mathrm{Tor}}
\newcommand{\dbar}{\bar \partial}
\newtheorem{thm}{Theorem}[section]
\newtheorem{lemme}[thm]{Lemma}
\newtheorem{proposition}[thm]{Proposition}
\newtheorem{cor}[thm]{Corollary}
\theoremstyle{remark}
\newtheorem{remark}[thm]{Remark}
\numberwithin{equation}{thm}
\title{Hermite--Einstein metrics in singular settings}
\author[Cao]{Junyan Cao}
\address{Laboratoire de Mathématiques J.A. Dieudonné UMR 7351 CNRS, Université Côte d'Azur Parc Valrose 06108, NICE CEDEX 2, France}
\email{junyan.cao@unice.fr}
\author[Graf]{Patrick Graf}
\address{Lehrstuhl f\"ur Mathematik I, Universit\"at Bayreuth, 95440 Bayreuth, Germany}
\email{patrick.graf@uni-bayreuth.de}
\author[Naumann]{Philipp Naumann}
\address{Lehrstuhl f\"ur Mathematik VIII, Universit\"at Bayreuth, 95440 Bayreuth, Germany}
\email{philipp.naumann@uni-bayreuth.de}
\author[Paun]{Mihai Paun}
\address{Lehrstuhl f\"ur Mathematik VIII, Universit\"at Bayreuth, 95440 Bayreuth, Germany}
\email{mihai.paun@uni-bayreuth.de}
\author[Peternell]{Thomas Peternell}
\address{Lehrstuhl f\"ur Mathematik I, Universit\"at Bayreuth, 95440 Bayreuth, Germany}
\email{thomas.peternell@uni-bayreuth.de}
\author[Wu]{Xiaojun Wu}
\address{Lehrstuhl f\"ur Mathematik VIII, Universit\"at Bayreuth, 95440 Bayreuth, Germany}
\email{xiaojun.wu@uni-bayreuth.de}
\begin{document}

\maketitle

\section{Preliminaries and main Results}\label{intro}

\subsection{Abstract}

Let $(X, \omega_X)$ be a normal Kähler space in the sense of Grauert, i.e.~the Kähler metric $\omega_X$ has local potentials which are induced by smooth, strictly plurisubharmonic functions defined on the (local) ambient space.
In this article we pursue the following main goals.
\smallskip

 \noindent $\bullet$ We establish the existence of Hermite--Einstein metrics for $\omega_X$-stable reflexive coherent sheaves $\cF$ on $X$.
Our method relies on a recent, very important work of Guo, Phong and Sturm~\cite{Phong}.
It is explained in Sections~1--5.
\smallskip

 \noindent $\bullet$ Assume that the space $X$ has at most klt singularities.
Then it has quotient singularities in the complement of a set $Z\subset X$ of codimension at least three, by the work of~\cite{GKKP11}.
If moreover $\cF|_{X \setminus Z}$ admits a $\Q$-vector bundle structure, then we obtain a more precise result.
Namely, we show that given any open subset $U \subset X$ containing $Z$, the said Hermite--Einstein metric on $\cF$ is compatible with the quotient structure when restricted to $X \setminus U$.
These results are established in Section~6 and 7.
\smallskip

 \noindent $\bullet$ The main motivation for the questions treated in this article was a conjecture formulated by Campana, Höring and Peternell in~\cite[Conj.~0.1]{CHP22}.
We discuss a particular case of it in Section~8.
\medskip

\subsection{Acknowledgements} It is our great pleasure to thank D.H. Phong and his collaborators for very enjoyable, interesting 
e-exchanges and ample explanations concerning their work. Many thanks to H. Guenancia and S. Sun for their pertinent remarks about earlier versions of the current article.

 \medskip


\subsection{Hermitian metrics on reflexive sheaves} 
 
 \noindent In order to state our main results, we start by recalling the notion of metric for a coherent, torsion free sheaf in the spirit of 
 Grauert and Riemenschneider in \cite{GR70}.
\medskip

\noindent Let $(X, \omega_X)$ be a compact normal K\"ahler space. We fix a finite covering $(A_\alpha)_\alpha$ of $X$ such that for each index $\alpha$ the corresponding analytic set $A_\alpha$ admits an embedding in the unit ball $U_\alpha$ in $\CC^{N_\alpha}$. Also, here we adopt the definition according to which the metric $\displaystyle \omega_X|_{A_\alpha}$ is equal to the $dd^c$ of a smooth, strictly psh function defined on $U_\alpha$ restricted to $X$.
\smallskip

\noindent We denote by $\cF$ a reflexive sheaf on $X$; we can assume that we have the exact sequence
\begin{equation}\label{mt1}
 {\mathcal O}_{A_\alpha}^{p_\alpha}\to {\mathcal O}_{A_\alpha}^{q_\alpha}\to \cF|_{A_\alpha}\to 0
\end{equation}
for each index $\alpha$, where $p_\alpha$ and $q_\alpha$ are positive integers. We are using these maps in order to construct a Hermitian metric $h_{\cF, 0}$ on $\cF$ as follows. 
\smallskip

\noindent Let $(\rho_\alpha)_\alpha$ be a partition of unit subordinate to 
$(A_\alpha)_\alpha$ and let $s_1, s_2$ be two local sections of $\cF$; we define their scalar product $h_{\cF, 0}$ by the formula 
\begin{equation}\label{mt2}
\langle s_1, s_2\rangle := \sum \rho_\alpha \langle \sigma_{\alpha 1},  \sigma_{\alpha 2}\rangle_\alpha
\end{equation}
where $\displaystyle \langle \cdot, \cdot \rangle_\alpha$ is the -trivial- scalar product on 
${\mathcal O}_{A_\alpha}^{q_\alpha}$ and $\sigma_i$ are sections of ${\mathcal O}_{A_\alpha}^{q_\alpha}$ such that 
$$\pi_\alpha(\sigma_{\alpha i})= s_i,\qquad \langle \sigma_{\alpha i}, \tau\rangle_\alpha= 0$$
for any section $\tau$ in the image of the first map of the sequence \eqref{mt1}.
We denote by $\pi_\alpha$ the second application in \eqref{mt1}. 
\smallskip

\noindent For example, consider the maximum ideal sheaf
$\displaystyle \cF:= (z_1, z_2)\subset \mathcal O_{\mathbb C^2}$, together with the maps
\[\alpha(f):= (z_2f, -z_1f), \qquad \beta(f_1, f_2):= z_1f_1+ z_2f_2.\] 
They induce an exact sequence
$\displaystyle 0\to \mathcal  O\to \mathcal  O^2\to \cF\to 0$, from which one can compute the metric. If $s:= z_1$, then we get
\[ |s|^2_{h_\cF}= \frac{|z_1|^2}{|z_1|^2+ |z_2|^2}.\]
\smallskip

\noindent Another indication showing that the metric $h_{\cF, 0}$ is natural is given by its connection with the desingularisation of the sheaf 
$\cF$, as follows. An important result due to H. Rossi, cf. \cite{Ros68}, shows the existence of a birational map $\pi:\wt X\to X$ such that 
$\wt X$ is non-singular and such that the inverse image $E:= \pi^\star(\cF)/\Tor$ of our sheaf $\cF$ modulo its torsion is 
locally free. 

\noindent Let $Z$ be the union of the singular set of $X$ and $\cF$, and set $X_0 : =X \setminus Z$.
In sub-section \ref{Ross} we prove the following result (see Lemma \ref{lowerbound} as well as the remarks immediately afterwards).

\begin{thm}\label{RoS} The bundle $E$ admits a Hermitian metric $h_E$ such that there exists a constant $C> 0$ for which the inequality
\[\sqrt{-1}\Theta(E, h_E)\geq -C\pi^\star(\omega_X)\otimes \Id_E\]
is satisfied. Moreover, the restriction of the metric $h_E$ to the $\pi$-inverse image of $X_0$ is induced by a metric $h_{\cF}$ on $\cF$ constructed by a similar procedure as above.
\end{thm}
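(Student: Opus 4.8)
The plan is to construct $h_E$ on $\wt X$ as a Grauert--Riemenschneider type metric, and then to observe that the curvature inequality to be proved is a \emph{closed} condition on the compact manifold $\wt X$; this lets me reduce it to an estimate over the locally free locus $X_0$, where it becomes the statement of Lemma~\ref{lowerbound} — the step carrying the real content.

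\emph{Construction of $h_E$.} I would first invoke Rossi's theorem, arranging in addition that $\pi$ is an isomorphism over $X_0$. Put $V_\alpha:=\pi^{-1}(A_\alpha)$; applying $\pi^\star$ to \eqref{mt1} and dividing by torsion yields surjections $\O_{V_\alpha}^{q_\alpha}\twoheadrightarrow E|_{V_\alpha}$, which, since $E$ is locally free, are surjections of vector bundles with locally free kernel $S_\alpha$. Equip $\O_{V_\alpha}^{q_\alpha}$ with the flat metric, let $h_{E,\alpha}$ be the induced quotient metric on $E|_{V_\alpha}$, and set
\[ h_E:=\sum_\alpha(\pi^\star\rho_\alpha)\,h_{E,\alpha}, \]
with $(\rho_\alpha)_\alpha$ the partition of unity on $X$ subordinate to $(A_\alpha)_\alpha$ from \eqref{mt2}. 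Since $\overline{\Supp(\pi^\star\rho_\alpha)}\subset V_\alpha$ and each $h_{E,\alpha}$ is a smooth metric on $E|_{V_\alpha}$, $h_E$ is a smooth Hermitian metric on $E$ over all of $\wt X$. Over $\pi^{-1}(X_0)$ the construction is simply the pullback of the one performed on $X$, so $h_E|_{\pi^{-1}(X_0)}=\pi^\star h_\cF$ with $h_\cF:=\sum_\alpha\rho_\alpha h_{\cF,\alpha}$ precisely the metric built by \eqref{mt2}; this disposes of the last assertion.

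\emph{Reduction to $X_0$.} Both $\sqrt{-1}\Theta(E,h_E)$ and $\pi^\star(\omx)\otimes\Id_E$ are \emph{smooth} sections of the bundle of Hermitian forms on the compact $\wt X$, and semipositivity of a continuous Hermitian form is a closed condition; as $\pi^{-1}(X_0)$ is dense in $\wt X$, it is enough to prove $\sqrt{-1}\Theta(E,h_E)\ge -C\,\pi^\star(\omx)\otimes\Id_E$ over $\pi^{-1}(X_0)$. Via the isomorphism $\pi^{-1}(X_0)\cong X_0$ this is equivalent to $\sqrt{-1}\Theta(\cF,h_\cF)\ge -C\,\omx\otimes\Id_\cF$ on $X_0$, i.e.\ to Lemma~\ref{lowerbound}.

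\emph{The curvature of $h_\cF$ on $X_0$.} Two facts feed into this. First, each $h_{\cF,\alpha}$ is a quotient metric of a flat bundle, hence $\sqrt{-1}\Theta(\cF,h_{\cF,\alpha})\ge 0$ in the sense of Griffiths on $A_\alpha\cap X_0$. Second, the Chern curvature of the convex combination $h_\cF=\sum_\alpha\rho_\alpha h_{\cF,\alpha}$ can be written as
\[ \sqrt{-1}\Theta(\cF,h_\cF)=\sum_\alpha w_\alpha\,\sqrt{-1}\Theta(\cF,h_{\cF,\alpha})\;+\;R_\partial\;+\;R_{\mathrm{tr}}, \]
where $w_\alpha\ge 0$, $\sum_\alpha w_\alpha=1$, the term $R_\partial$ gathers the contributions of $\partial\rho_\alpha$ and $\partial\bar\partial\rho_\alpha$, and $R_{\mathrm{tr}}$ is a negative semidefinite expression quadratic in the covariant derivatives of the relative endomorphisms $h_{\cF,\alpha}^{-1}h_{\cF,\beta}$. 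The first sum is $\ge 0$. The term $R_\partial$ is bounded in absolute value by $C\,\omx$: the $\rho_\alpha$ extend to smooth functions on the ambient balls $U_\alpha$, and since the potentials $\phi_\alpha$ of $\omx$ are smooth and strictly plurisubharmonic, $\omx=dd^c\phi_\alpha|_X$ dominates $\sqrt{-1}\partial g\wedge\bar\partial g$ and $|dd^c g|$ for any fixed smooth $g$ on $U_\alpha$, up to a constant. The remaining — and crucial — point is to bound $R_{\mathrm{tr}}$ from below, i.e.\ to prove
\[ \sqrt{-1}\,D'(h_{\cF,\alpha}^{-1}h_{\cF,\beta})\wedge\overline{D'(h_{\cF,\alpha}^{-1}h_{\cF,\beta})}\;\le\;C\,\omx\otimes\Id_\cF\qquad\text{on }A_\alpha\cap A_\beta\cap X_0, \]
($D'$ the $(1,0)$-part of a Chern connection), uniformly up to $Z$. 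Here one exploits the explicit shape of the relative endomorphism of two quotient metrics: local holomorphic liftings $\Phi_{\alpha\beta}\colon\O^{q_\beta}\to\O^{q_\alpha}$ and $\Phi_{\beta\alpha}$ of $\pi_\alpha,\pi_\beta$ exist, they show that $h_{\cF,\alpha}$ and $h_{\cF,\beta}$ are mutually bounded by the operator norms of $\Phi_{\alpha\beta},\Phi_{\beta\alpha}$ — holomorphic, hence locally bounded across $Z$ — and a finer analysis of the minimisation defining the quotient norm, again using that $\omx$ controls the Euclidean metric on $U_\alpha\cap U_\beta$, promotes this to the required bound on first derivatives. Combining the three pieces gives $\sqrt{-1}\Theta(\cF,h_\cF)\ge -C\,\omx\otimes\Id_\cF$ on $X_0$, and hence the theorem.

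\emph{Main obstacle.} Everything except the last estimate is routine. The genuinely delicate point is the transition bound on $R_{\mathrm{tr}}$ near $Z$: the individual quotient metrics $h_{\cF,\alpha}$ need not be differentiable across $Z$, so one cannot differentiate them directly; the heart of Lemma~\ref{lowerbound} is that their mutual ratio is nevertheless regular enough, and controlling it requires the careful local argument sketched above.
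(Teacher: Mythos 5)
Your overall architecture (build the metric locally from the presentations, glue with the partition of unity, reduce the global inequality on $\wt X$ to a density argument over $\pi^{-1}(X_0)$) is sound, but the route you choose through the gluing step is different from the paper's, and it leaves a genuine gap exactly where you flag it. You glue the quotient metrics themselves by a convex combination $h_\cF=\sum_\alpha\rho_\alpha h_{\cF,\alpha}$ and must then control the term $R_{\mathrm{tr}}$, i.e.\ the first covariant derivatives of the relative endomorphisms $h_{\cF,\alpha}^{-1}h_{\cF,\beta}$, uniformly up to $Z$ and, after pull-back, in terms of the \emph{degenerate} form $\pi^\star\omega_X$ rather than a genuine metric on $\wt X$ (so the trivial compactness bound is useless). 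This is not a technicality one can pass over with ``a finer analysis of the minimisation defining the quotient norm'': the $C^0$ comparison of Lemma~\ref{prel1} (holomorphic liftings $\Phi_{\alpha\beta}$ and their operator norms) gives no control on derivatives, and the quotient metrics genuinely fail to be differentiable across $Z$ because the orthogonal projection onto the kernel of the presentation jumps where its rank drops. Nothing in your sketch produces the required first-order estimate, and it is not even clear that the convex combination of the quotient metrics satisfies the asserted bound.

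The paper sidesteps this entirely by dualizing. One equips $\cF^\star$ with the metrics $h_\alpha^\star$ dual to the quotient metrics; these are sub-bundle metrics inside the flat $\O^{q_\alpha}$, so $\log|\xi|^2_{h_\alpha^\star}$ is plurisubharmonic for every local holomorphic section $\xi$ (Griffiths). One then glues not the metrics but these logarithms, forming $\log\bigl(\sum_i\theta_i^2|\xi|^2_{h_i^\star}\bigr)$, and applies the gluing lemma of \cite[Lemma 3.5]{Dem92}: this yields $\sqrt{-1}\ddc\log\bigl(\sum_i\theta_i^2|\xi|^2_{h_i^\star}\bigr)\geq-C\omega_X$ using only the $C^0$ mutual boundedness of Lemma~\ref{prel1} and the smooth extension of the cutoffs to the ambient balls --- no derivative bounds on the $h_i^\star$ are needed, because the plurisubharmonicity of each individual $\log|\xi|^2_{h_i^\star}$ carries the curvature and the gluing only costs $-C\omega_X$. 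The metric of Theorem~\ref{RoS} is then the \emph{dual} of this glued metric on $\cF^\star$ (this is why the paper stresses that $h_\cF$ is the dual of $h_{\cF^\star,0}$, not $h_{\cF,0}$ itself), and the smooth extension across the exceptional divisor follows from identifying the local dual metrics with pull-backs of the globally smooth dual tautological metric on the Grassmannian via Rossi's graphs. To repair your argument, replace the convex combination of quotient metrics by this dual construction.
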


\noindent More precisely, the metric $h_\cF$ will be the dual of the metric $h_{\cF^\star, 0}$ defined on $\cF^\star$ as in \eqref{mt2}.

\medskip

\noindent  On the vector bundle $\displaystyle \cF|_{X_0}$ the metric  $h_{\cF, 0}$ is obtained by piecing together the quotient metrics induced by \eqref{mt1}.
We denote by $\phi_Z\leq 0$ a function with log-poles along the generators of the ideal of $Z$ and then we have the following statement.

\begin{thm}\label{HE} Let $\cF$ be a reflexive coherent sheaf on a normal Kähler space $(X, \omega_X)$. We assume that $\cF$ is stable with respect to the Kähler metric $\omega_X$. Then we can construct a metric $h_{\cF}= h_{\cF, 0}\exp(s)$ on $\cF|_{X_0}$ with the following properties:
\begin{enumerate}
\smallskip

\item[\rm (i)] It is smooth and it satisfies the Hermite-Einstein equation on $X_0$.
\smallskip

\item[\rm (ii)] There exists a constant $C>0$ such that the inequalities
$$\Tr \exp(s) \leq C,\qquad \exp(s)\geq  Ce^{N\phi_Z}\Id_{\cF}$$
hold pointwise on $X_0$, where $N> 0$ is a positive, large enough constant.
\smallskip

\item[\rm (iii)] We have $$\int_{X_0}
\frac{1}{-\phi_Z}|D's|^2dV\leq C, \quad \int_{X_0} |\Theta(\cF, h_{\cF})|^2dV< \infty$$
where the volume element $dV$ is induced by the metric $\omega_X$ and the norm $|\cdot|$ is with respect to $h_{\cF, 0}$.
\end{enumerate}
\end{thm}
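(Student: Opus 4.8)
The plan is to produce $s$ as the limit of solutions of the Hermite--Einstein Dirichlet problem on an exhaustion of the non-compact manifold $X_0$, following the template of Bando--Siu for reflexive sheaves but with the a priori estimates supplied by the recent work of Guo--Phong--Sturm \cite{Phong}. Let $\lambda$ be the Einstein constant fixed by the $\omx$-slope of $\cF$, so that the equation to solve reads
\[
\sqrt{-1}\,\Lambda_{\omx}\Theta\bigl(\cF,h_{\cF,0}e^{s}\bigr)=\lambda\,\Id_{\cF}\qquad\text{on } X_0,
\]
with $s$ a section of $\End(\cF)$ self-adjoint with respect to $h_{\cF,0}$. Since $\phi_Z\to-\infty$ along $Z$, the sublevel sets $X_c:=\{\phi_Z\ge -c\}$ are compact submanifolds with boundary of $X_0$ exhausting it as $c\to\infty$. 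On each $X_c$ the Hermite--Einstein Dirichlet problem with boundary value $h_{\cF,0}|_{\partial X_c}$ is solvable unconditionally by Donaldson's boundary value theory (if need be after first relaxing the right-hand side to $\lambda\,\Id_{\cF}-\tau s$ à la Donaldson--Kobayashi), yielding a smooth solution $h_c=h_{\cF,0}e^{s_c}$.

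The heart of the proof is a family of a priori estimates for $s_c$ which are uniform in $c$ (and in $\tau$), and this is exactly where \cite{Phong} enters. Applying a Bochner-type formula to $\log\Tr e^{s_c}$ and to $\log\Tr e^{-s_c}$ produces differential inequalities of the form $\Delta_{\omx}\log\Tr e^{\pm s_c}\ge -C-C|\Lambda_{\omx}\Theta|$, in which the curvature term is controlled through the equation; combining these with the ($c$-uniform) energy estimate coming from stability and feeding them into the auxiliary complex Monge--Ampère comparison of \cite{Phong} gives the pointwise bounds
\[
\Tr e^{s_c}\le C,\qquad e^{s_c}\ge C\,e^{N\phi_Z}\,\Id_{\cF},
\]
the second being equivalent to $\Tr e^{-s_c}\le C\,e^{-N\phi_Z}$. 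The weight $e^{N\phi_Z}$ is forced by the degeneration of the background geometry along $Z$ — in particular by the incompleteness of $\omx$ there — and choosing $N$ large is precisely what allows the error terms concentrated near $Z$ to be absorbed. Letting $c\to\infty$ (and $\tau\to0$) preserves these inequalities, which is assertion (ii).

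For (iii) I would pair the (regularized) Hermite--Einstein equation with $\frac{1}{-\phi_Z}s_c$ and use the Chern--Weil identity: this expresses $\int_{X_c}\frac{1}{-\phi_Z}|D's_c|^2\,dV$ and $\int_{X_c}|\Theta(\cF,h_c)|^2\,dV$ in terms of the Donaldson functional along the path $t\mapsto h_{\cF,0}e^{ts_c}$, the $C^0$ bounds of (ii), and boundary contributions which disappear in the limit since $\phi_Z\equiv -c$ on $\partial X_c$. Stability forces the Donaldson functional to be bounded below (indeed coercive), bounding these integrals uniformly in $c$ and giving (iii) after passage to the limit. It remains to run the classical Donaldson--Uhlenbeck--Yau dichotomy as $c\to\infty$: either the $s_c$ stay bounded in $L^\infty_{\mathrm{loc}}(X_0)$, and then interior Schauder estimates on $X_0$ extract a smooth subsequential limit $s$ solving the genuine Hermite--Einstein equation and inheriting (ii)--(iii), which is (i); or they blow up, and the standard weak-limit construction produces a $\bar\partial$-closed, $h_{\cF,0}$-bounded endomorphism-valued $(1,0)$-form whose image is a coherent subsheaf of $\cF|_{X_0}$, whose reflexive hull extends to a saturated coherent subsheaf $\cG\subset\cF$ on $X$ with $0<\rk\cG<\rk\cF$ and $\omx$-slope at least that of $\cF$ — contradicting stability.

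I expect the main obstacle to be the $c$-uniform $C^0$ estimate near $Z$: one has to control the solutions on the incomplete ends of $(X_0,\omx)$ and check that the Monge--Ampère comparison technique of \cite{Phong} survives this degeneration, and it is this analysis that simultaneously pins down the weight $\phi_Z$ and the constant $N$. A second, more technical point is to verify that in the blow-up scenario the weakly holomorphic destabilizing subsheaf obtained on $X_0$ genuinely extends across $Z$ as a coherent reflexive subsheaf of $\cF$; for this one invokes the Uhlenbeck--Yau removable-singularity theorem together with the normality of $X$, the reflexivity of $\cF$, and the fact that $Z$ has codimension at least two.
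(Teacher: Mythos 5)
Your architecture is genuinely different from the paper's: you stay on the non-compact locus $X_0$ with the fixed, incomplete metric $\omega_X$ and solve Hermite--Einstein Dirichlet problems on an exhaustion $X_c=\{\phi_Z\ge -c\}$, whereas the paper first passes to Rossi's modification $\pi:\wh X\to X$, where $E=\pi^\star(\cF)/\Tor$ is an honest vector bundle on a smooth \emph{compact} Kähler manifold, proves (via Toma) that $E$ is stable for the smooth perturbations $\omega_t=\pi^\star\omega_X+t\omega_{\wh X}$, applies Uhlenbeck--Yau on the closed manifold $(\wh X,\omega_t)$ for each $t>0$, and extracts a limit as $t\to 0$. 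Your route has the attraction that Donaldson's boundary-value theorem requires no stability, so stability enters only when ruling out blow-up; the paper's route buys a compact smooth ambient space on which all the analytic ingredients (a reference metric $h_E$ with a uniform curvature lower bound in terms of $\pi^\star\omega_X$, Simpson's destabilization scheme, the Guo--Phong--Sturm inequality) are available in their standard form.

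There is, however, a genuine gap at the central step, the $c$-uniform $C^0$ estimate, which you correctly flag as ``the main obstacle'' but do not close. The mean-value inequality of Guo--Phong--Sturm invoked here (Theorem \ref{phong}) concerns a family of Kähler metrics $\omega_t\in\{\chi+t\omega_{\wh X}\}$ on a \emph{fixed closed} manifold, and its constant is uniform only under the entropy hypothesis \eqref{new7}; verifying that hypothesis is itself a substantive step (Lemma \ref{entro1}) which uses the geometry of $\pi^\star\omega_X$ versus $\omega_{\wh X}$ on the resolution. In your setting there is no closed manifold and no family of background metrics: you would need a version of the inequality on compact manifolds with boundary that remains uniform as the boundary recedes into the incomplete end along $Z$ (where $X$ itself is singular, not merely $\cF$), together with an entropy-type input that you never formulate. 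Asserting that one ``feeds the differential inequalities into the Monge--Ampère comparison of \cite{Phong}'' does not produce the estimate; engineering a situation in which that comparison applies is precisely the content of Section \ref{MV}. A second, smaller gap sits in the blow-up alternative: your weakly holomorphic destabilizing subsheaf lives on $X_0$ and must be extended to a coherent subsheaf of $\cF$ across $Z\supset X_{\rm sing}$ with a controlled slope. The Uhlenbeck--Yau removable-singularity theorem is a statement on smooth manifolds, and both the extension across the singular locus of a normal space and the identification of the analytic degree (a Chern--Weil integral over $X_0$ against $\omega_X$) with the slope appearing in the stability hypothesis require arguments you do not supply. The paper sidesteps both points by running Simpson's scheme on the smooth compact $\wh X$, where the only nonstandard feature is that degrees are measured against the semipositive form $\pi^\star\omega_X$. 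Finally, your derivation of the weighted gradient bound in (iii) by pairing the equation with $\tfrac{1}{-\phi_Z}s_c$ is not how the weight arises: in the paper it comes from the unweighted integral identity of Lemma \ref{W1} combined with the pointwise lower bound \eqref{ineqpsi} for $\Phi(x,y)=\frac{e^{x-y}-1}{x-y}$, once the $C^0$ estimate $|\eta_t|\le C+\log\frac{1}{|\sigma_D|^2}$ is known; pairing with a weighted test endomorphism would introduce derivatives of the weight that are not obviously controllable.
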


\noindent Thus, our main contributions to the HE theme can be summarised as follows.
\begin{enumerate}

\item \emph{We allow both $X$ and $\cF$ to be singular.} Modulo the work of \cite{Phong}, our proof of Theorem
  \ref{HE} is  self-contained, in the sense that it only relies on the basic arguments of S. Donaldson \cite{Don85}, Uhlenbeck-Yau \cite{UY86} and C. Simpson \cite{Sim88}. The literature on this subject is abundant, and we only refer to the most recent preprints \cite{CW}, \cite{Chen23}, \cite{CT23} and the references therein for the construction of admissible Hermite-Einstein metrics, as well as for new and very promising points of view in this circle of ideas in \cite{Jon}.
\smallskip
  
\item \emph{We obtain estimates.} Indeed, the estimates in (ii), (iii) above are more precise than producing an admissible HE metric, cf. \cite{BS94} and the references quoted above.   
\end{enumerate}

\noindent In some sense, the proof of Theorem \ref{HE} is quite standard, and it will be done along the following lines. By using the aforementioned result of Rossi combined with 
a theorem due to M.~Toma, cf. \cite{Tom19}, the bundle $E$ is stable with respect to any positive, small enough perturbation $\omega_t$ of the inverse image metric $\pi^\star \omega_X$. We can therefore construct a Hermitian metric on $E$, which differs from $h_E$ obtained in 
Theorem \ref{RoS} above by a positive endomorphism of $E$ and which is Hermite-Einstein with respect to $\omega_t$. The tricky part is to extract a limit as $t\to 0$. It is at this point that the mean-value inequality in \cite{Phong} is playing a crucial role: thanks to it we obtain uniform estimates for our family of endomorphisms, which combine nicely with the approach of Simpson in \cite{Sim88}, and ultimately allow us to conclude.

\medskip  

\noindent Assume next that locally on some open subset 
$\sA\subset X$, the singularities of $X$ are quotient and the restriction $\cF|_A$ is a $\Q$-sheaf.
As consequence of Theorem \ref{HE}
we show that the endomorphism $s$ defining the HE metric $h_{\cF}|_A$ is bounded when restricted to $\sA$, cf. Theorem \ref{orbi, II}. Thus, the metric $h_{\cF}|_A$ that we construct is $\mathcal C^0$-close to the orbifold metric of $\cF$. Since we are unable to obtain higher order estimates,
this is not precise enough in order to allow us to evaluate e.g. the orbifold Chern classes of $\cF$.
\medskip  

\noindent This issue is partly addressed in our next result. Let $X$ be a compact K\"ahler space with at most klt singularities. Then by the results in \cite{GKKP11}
there exists an analytic subset $W\subset X$ such that $\codim_X W\geq 3$ and such that $X\setminus W$ has at most quotient singularities. 
\smallskip

\noindent It would be ideal to have at our disposal a Kähler metric on $X$ whose restriction to 
$X\setminus W$ accounts for the quotient singularities, i.e. its pull-back via the local uniformisations is 
quasi-isometric to the flat metric.
Unfortunately, we see no natural candidate for such a metric.  
Our substitute for it is contained in the following statement.
\begin{lemme}\label{omorb} Let $U\subset X$ be any open subset of $X$ such that $W\subset U$.  Let $U'$ be some open set such that $W \subset U' \Subset U$. 
	Then there exists a K\"ahler metric $\omega_{\rm orb}$
  whose restriction to $U'$ is smooth, and whose restriction to $X\setminus U$ has conic singularities, in the sense that its pull-back via the uniformising morphisms is quasi-isometric with the flat metric.
\end{lemme}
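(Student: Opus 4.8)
\emph{Plan of proof.} I would realise $\omega_{\rm orb}$ as $\omega_X+\ddc\Phi$ for a single bounded function $\Phi$ on $X$ that vanishes identically on a neighbourhood of $W$ and that, away from $U$, is built out of flat local potentials on the quotient charts so as to cancel the degeneracy of the pull-backs of $\omega_X$. Concretely, fix open sets $W\subset U'\Subset U_1\Subset U$. By \cite{GKKP11} the compact set $X\setminus U_1$ lies in $X\setminus W$ and hence has only quotient singularities; cover it by finitely many uniformising charts $\pi_j\colon\tilde V_j\to V_j=\tilde V_j/G_j$ ($1\le j\le m$) with $V_j\Subset X\setminus\overline{U'}$, each $\tilde V_j$ a ball in $\CC^n$ and $G_j\subset U(n)$ acting linearly (Cartan). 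Shrinking if necessary so that $\omega_X|_{V_j}=\ddc\psi_j$ with $\psi_j$ smooth and strictly plurisubharmonic on the ambient space, the function $\pi_j^\star\psi_j$ is smooth and plurisubharmonic on $\tilde V_j$; letting $b_j$ be the function on $V_j$ with $\pi_j^\star b_j=|w^{(j)}|^2$, the form $\omega_X+C\,\ddc b_j$ pulls back to $\ddc\bigl(\pi_j^\star\psi_j+C\,|w^{(j)}|^2\bigr)>0$, so it is an orbifold Kähler metric on $V_j$, quasi-isometric to the flat one for every $C>0$.

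\emph{Gluing and the model computation.} Pick a partition of unity $(\theta_j)_{0\le j\le m}$ on $X$ subordinate to $\{U_1\}\cup\{V_j\}_{j\ge 1}$ with $\theta_0\equiv 1$ near $\overline{U'}$, $\Supp\theta_0\Subset U_1$, $\Supp\theta_j\subset V_j$ and $\Supp\theta_j\cap\overline{U'}=\emptyset$ for $j\ge 1$ (so $\sum_{j\ge 1}\theta_j\equiv 1$ on $X\setminus U_1\supset X\setminus U$). Set $\Phi:=\sum_{j\ge 1}\theta_j(-\lambda\psi_j+C\,b_j)$ with constants $0<\lambda<1$ and $C>0$ to be fixed, and $\omega_{\rm orb}:=\omega_X+\ddc\Phi$. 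Since $\Phi\equiv 0$ near $\overline{U'}$, we have $\omega_{\rm orb}=\omega_X$ there; in particular its restriction to $U'$ is the smooth metric $\omega_X$. Expanding $\ddc(\theta_j\psi_j)$ and $\ddc(\theta_j b_j)$ and using $\sum_{j\ge 1}\theta_j\equiv 1$ on $X\setminus U$ one gets, on $X\setminus U$,
\[
\omega_{\rm orb}=(1-\lambda)\,\omega_X+C\sum_{j\ge 1}\theta_j\,\ddc b_j+E ,
\]
where the error $E$ is a bounded $(1,1)$-form, linear in $\lambda$ and $C$, supported on the locus where the $\theta_j$ have nonzero differential. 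After pulling back by any $\pi_j$, the form $\sum_{j\ge 1}\theta_j\,\ddc b_j$ is a convex combination of flat metrics, hence uniformly comparable to the flat metric on the compact set $X\setminus U$; choosing the charts and the $\theta_j$ so that the $b_j$-part of $E$ is dominated by it, and then $C$ large with $\lambda$ fixed, $\omega_{\rm orb}$ becomes quasi-isometric to the flat metric on every uniformising chart over $X\setminus U$ — i.e.\ it has conic singularities there. On the intermediate annulus $U\setminus U'$ the same expansion, now keeping the factor $(1-\lambda\sum_{j\ge 1}\theta_j)>0$ in front of $\omega_X$, together with the strict positivity of $\omega_X$ on $\xreg$ and the vanishing of $E$ near $\overline{U'}$, gives $\omega_{\rm orb}>0$, so $\omega_{\rm orb}$ is a Kähler metric on all of $X$.

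\emph{Where the work is.} The real content is the last step: forcing the glued object to be at once $d$-closed and strictly positive. Closedness rules out patching metrics directly and forces the correction to be a \emph{global} $\ddc\Phi$; positivity must then be verified on the uniformising covers, where $\pi_j^\star\omega_X$ degenerates exactly along the ramification loci, so it cannot by itself absorb the partition-of-unity error $E$. The point is that $\sum_{j\ge 1}\theta_j\,\ddc b_j$ is positive definite on those covers regardless of $\omega_X$, so only $E$ has to be controlled — which one arranges by working with sufficiently fine charts; a cleaner, if less elementary, route is first to produce an orbifold Kähler metric on a neighbourhood of $X\setminus U$ (which exists for spaces with quotient singularities) and then to interpolate it with $\omega_X$ by means of the regularised maximum, which preserves strict positivity automatically and localises the interpolation near $\overline{U'}$. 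The hypothesis $\codim_X W\ge 3$ enters only in that it gives enough room to switch the correction $\Phi$ off on a full neighbourhood of $W$.
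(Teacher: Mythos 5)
Your overall architecture is the right one and matches the paper's: correct $\omega_X$ by a single global $\ddc\Phi$, with $\Phi$ built on uniformizing charts from the invariant potential $\|w\|^2$ of the flat metric and switched off on a neighbourhood of $W$, so that positivity on the covers comes from $\ddc\|w\|^2$ rather than from the degenerate $\pi_j^\star\omega_X$. The gap is in the one step that carries all the weight: absorbing the partition-of-unity error $E$. On a uniformizing chart the terms $C\,\ddc\theta_j\cdot b_j$ and $C\,d\theta_j\wedge d^c b_j$ are of the \emph{same order} as the good term $C\sum_j\theta_j\,\ddc b_j$: writing everything relative to a chart of radius $r$ one has $b_j=O(r^2)$, $|db_j|=O(r)$, $|d\theta_j|=O(r^{-1})$, $|\ddc\theta_j|=O(r^{-2})$, while $\ddc b_j\simeq\omega_{\rm euc}$, so the ratio of $|E|$ to the positive term is a fixed constant (depending on the multiplicity of the cover and the distortion of the transition maps $b_j-b_k$), independent of $r$ and of $C$. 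Hence neither ``taking $C$ large'' nor ``working with sufficiently fine charts'' improves the comparison, and $(1-\lambda)\omega_X$ cannot absorb $E$ either, since $\pi_j^\star\omega_X$ degenerates along the ramification locus exactly where positivity is needed. As written, the positivity of $\omega_{\rm orb}$ on $X\setminus U'$ is therefore not established. Your fallback — interpolate by regularized maximum with an orbifold Kähler metric already defined near $X\setminus U$ — defers rather than solves the problem, since producing that orbifold metric is the same gluing question.

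The paper's proof closes exactly this gap by replacing the linear combination $\sum_j\theta_j(\cdots)$ with Demailly's gluing lemma \cite[Lemma~3.5]{Dem92}: it sets $\varphi=\log\bigl(\theta_0^2+\sum_{i\ge1}\theta_i^2(1+w_i)\bigr)$, where $w_i$ is the $\Gamma_i$-average of $\|z\|^2$, rescaled so that $0<w_i\le1$. The point of the logarithmic gluing is that the dangerous cross terms $d\theta_i\wedge d^c w_i$ cancel by Cauchy--Schwarz, and the remaining error involves only $\sqrt{-1}(\theta_i\partial\dbar\theta_i-\partial\theta_i\wedge\dbar\theta_i)$ together with the uniform two-sided bounds $C_{ij}^{-1}<(1+w_i)/(1+w_j)<C_{ij}$ on overlaps; this error is bounded below by $-\tfrac{C}{2}\omega_X$ with $\omega_X$ the \emph{global} Kähler form, and is then absorbed by defining $\omega_{\rm orb}:=C\omega_X+\sqrt{-1}\partial\dbar\varphi$. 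Near $W$ one has $\varphi\equiv0$ and $\omega_{\rm orb}=C\omega_X$; on the charts over $X\setminus U$ the term $\ddc w_i$ supplies the strict positivity quasi-isometric to $\omega_{\rm euc}$. If you substitute this gluing (or, equivalently, a regularized maximum of the local potentials $\psi_j+C^{-1}b_j$, which has the same cancellation built in) for your linear partition of unity, your argument goes through; the $-\lambda\psi_j$ term then becomes unnecessary.
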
  
\medskip

\noindent If $X$ is projective, the existence of $\omega_{\rm orb}$ is a consequence of the following ``orbifold partial resolution'' theorem, a very important result due to C. Li and G. Tian, cf.~\cite[Thm.~3]{LT19}:
\emph{there exists a projective variety $\wh X$ with at most quotient singularities, together with a birational map $p \colon \wh X \to X$ such that $p$ is biholomorphic when restricted to $X \setminus W$.}
Then we simply consider a K\"ahler metric with conic singularities on $\wh X$.
Its direct image, modified on $U$ by the usual maximum procedure, will be our desired $\omega_{\rm orb}$.
\medskip

\noindent Then we obtain the following more precise version of Theorem \ref{HE}.

\begin{thm}\label{HEquot} Let $X$ be a klt K\"ahler space and
  let $\cF$ be a stable $\Q$-sheaf on $(X, \omega_{\rm orb})$. There exists a metric $h_{\cF}= h_{\cF, 0}\exp(s)$ on $\cF|_{X_0}$ with the following properties:
\begin{enumerate}
\smallskip

\item[\rm (i)] It is smooth and it satisfies the Hermite-Einstein equation on $\displaystyle (X_{0}, \omega_{\rm orb}|_{X_0})$. Moreover, it has quotient singularities when restricted to
$X\setminus U$.  
\smallskip

\item[\rm (ii)] There exists a constant $C>0$ such that the inequalities
$$\Tr \exp(s) \leq C,\qquad \exp(s)\geq  Ce^{N\phi_Z}\Id_{\cF}$$
hold pointwise on $X_0$, where $N> 0$ is a positive, large enough constant.
\smallskip

\item[\rm (iii)] We have $$\int_{X_0}
\frac{1}{-\phi_Z}|D's|^2dV\leq C, \quad \int_{X_0} |\Theta(\cF, h_{\cF})|^2dV< \infty$$
where the volume element $dV$ is induced by the metric $\omega_{\rm orb}$  and the norm $|\cdot|$ is with respect to $h_{\cF, 0}$.
\end{enumerate}
\end{thm}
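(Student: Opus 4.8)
The plan is to combine Theorem \ref{HE} with the local orbifold structure, using the metric $\om_{\rm orb}$ from Lemma \ref{omorb} in place of $\om_X$. First I would observe that $\om_{\rm orb}$ is indeed a genuine Kähler metric on $X$ in the sense of Grauert (globally), so Theorem \ref{HE} applies verbatim with $\om_X$ replaced by $\om_{\rm orb}$: stability of $\cF$ is assumed with respect to $\om_{\rm orb}$, and we obtain a metric $h_\cF = h_{\cF,0}\exp(s)$ on $\cF|_{X_0}$ satisfying the Hermite--Einstein equation on $(X_0, \om_{\rm orb}|_{X_0})$ together with the estimates (ii) and (iii), where now the volume element $dV$ is the one induced by $\om_{\rm orb}$. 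This immediately gives all of the statement except the assertion in (i) that $h_\cF$ has quotient singularities when restricted to $X \setminus U$. So the whole content of the theorem beyond Theorem \ref{HE} is this local regularity statement.

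To prove it I would argue locally near a point $x \in X \setminus U$. Since $\cF$ is a $\Q$-sheaf and $X$ has quotient singularities there, pick a uniformising morphism $q \colon \wt{\sA} \to \sA$ from a smooth (or at least local-model) space, a finite group $G$ acting on $\wt{\sA}$, with $\sA = \wt{\sA}/G$, such that the pullback $q^\star \cF$ (modulo torsion) is a $G$-equivariant vector bundle $\wt{\cF}$ on $\wt{\sA}$ carrying a $G$-invariant smooth Hermitian metric $\wt h_{\rm orb}$, and such that $q^\star \om_{\rm orb}$ is quasi-isometric to the flat metric by the choice of $\om_{\rm orb}$ in Lemma \ref{omorb}. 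The pulled-back metric $q^\star h_\cF = q^\star(h_{\cF,0})\exp(q^\star s)$ is a Hermite--Einstein metric on $\wt{\cF}$ (restricted to the complement of the branch/singular locus) with respect to the quasi-isometrically-flat metric $q^\star\om_{\rm orb}$; writing $q^\star h_\cF = \wt h_{\rm orb}\exp(\wt s)$ for a Hermitian endomorphism $\wt s$, the HE equations for $q^\star h_\cF$ and for $\wt h_{\rm orb}$ (the latter not HE, but smooth) combine to an elliptic equation for $\wt s$. The estimate (ii), namely $\Tr\exp(s) \le C$ and $\exp(s) \ge C e^{N\phi_Z}\Id$, pulls back to show that $\wt s$ is bounded: indeed near $x$ the function $\phi_Z$ is bounded (since $x \notin Z$ after possibly shrinking, as $Z$ has codimension $\ge 2$ and we may arrange $x$ avoids it, or more carefully: the singular locus of $\cF$ meets $\wt{\sA}$ in a proper analytic subset across which $\wt s$ extends by the bounds), hence $q^\star s$ and therefore $\wt s$ are $L^\infty_{\rm loc}$. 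Then I would invoke the standard interior regularity theory for the Hermite--Einstein equation à la Uhlenbeck--Yau and Simpson (as already used for Theorem \ref{HE}): an $L^\infty$ solution of the HE equation with respect to a smooth background metric on a smooth manifold, which a priori solves the equation only off a proper analytic subset, extends smoothly across it and is smooth everywhere. This shows $\wt s$ is smooth and $G$-invariant on all of $\wt{\sA}$, i.e. $q^\star h_\cF$ is a smooth $G$-invariant metric on $\wt{\cF}$, which is precisely the statement that $h_\cF$ has quotient singularities on $X \setminus U$.

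I expect the main obstacle to be the removable-singularity step: one must be careful that the endomorphism $s$, which is only known to be smooth on $X_0$ (away from the singularities of $X$ and of $\cF$) and to satisfy the pointwise and integral bounds of (ii)--(iii), really does pull back to an $L^\infty_{\rm loc}$, or at worst $W^{1,2}_{\rm loc}$, section across the preimage of $\Sing X \cup \Sing\cF$ inside the local uniformisation, so that Uhlenbeck--Yau-type extension applies. The integral bound $\int_{X_0} |\Theta(\cF,h_\cF)|^2\, dV < \infty$ in (iii) is exactly the kind of curvature control that feeds into such an extension result, after checking it is preserved under the quasi-isometry $q^\star\om_{\rm orb} \sim \om_{\rm flat}$; combined with the $C^0$-bound on $\exp(s)$ and its lower bound $Ce^{N\phi_Z}$, this should localize to give $\wt s \in L^\infty$ near the exceptional set and then bootstrap. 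A secondary technical point is to make sure the two HE structures are measured with the same metric: the background metric $h_{\cF,0}$ used in (ii)--(iii) is the Grauert-type metric from \eqref{mt2}, which is only bounded (not smooth) near $\Sing\cF$, so comparing $\exp(s)$ with the genuinely smooth orbifold metric $\wt h_{\rm orb}$ requires an auxiliary comparison $h_{\cF,0} \sim q_\star\wt h_{\rm orb}$ up to bounded factors on $X \setminus U$, which holds because both induce the same reflexive sheaf and differ by a global endomorphism bounded above and below on the relatively compact locus $X \setminus U$.
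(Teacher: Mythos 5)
Your reduction of the theorem to ``Theorem \ref{HE} applies verbatim with $\omega_X$ replaced by $\omega_{\rm orb}$'' is exactly the step that fails, and it is where the paper spends all of Sections 6 and 7. The metric $\omega_{\rm orb}$ is \emph{not} a K\"ahler metric in the sense required by Theorem \ref{HE}: its potential $\varphi$ is only continuous (it is built from the functions $w_i$ obtained by averaging $\Vert z\Vert^2$ over the local group actions), and its pull-back $\pi^\star\omega_{\rm orb}$ to the resolution $\wh X$ has both zeroes \emph{and poles} along the exceptional divisor, whereas $\pi^\star\omega_X$ only degenerates. This breaks the proof of Theorem \ref{HE} at its very first technical input: the uniform entropy bound of Lemma \ref{entro1} for the family $\omega_t=\pi^\star\omega_X+t\omega_{\wh X}$ rests on arranging $\omega_t\le \omega_{\wh X}$ and on $\omega_X$ being locally quasi-isometric to a Euclidean metric, neither of which holds for $\pi^\star\omega_{\rm orb}+t\omega_{\wh X}$ (which is not even smooth). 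Without that bound the Guo--Phong--Sturm mean-value inequality cannot be applied uniformly, and the whole chain of $\mathcal C^0$ estimates collapses. The paper's substitute is to prove an $L^{1+\ep_0}$ bound on the densities $\Psi_t$ (Lemma \ref{lp} and \eqref{new51}), then solve a two-parameter family of Monge--Amp\`ere equations \eqref{ma12} with regularized right-hand sides to produce genuinely smooth metrics $\omega_\eta$ in the classes $\{\pi^\star\omega_{\rm orb}+t\omega_{\wh X}\}$ with uniformly bounded potentials and entropy (Corollary \ref{rich}), and only then rerun the Donaldson--Uhlenbeck--Yau--Simpson scheme with respect to $\omega_\eta$ (Theorem \ref{Estconic, I}). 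A second point you skip entirely: excluding the degenerate case requires producing a destabilizing subsheaf whose slope is computed against the \emph{singular} form $\pi^\star\omega_{\rm orb}$, and the Chern--Weil/degree identity \eqref{deg1} is not standard there; the paper justifies it via Bedford--Taylor theory using that the potentials of $\omega_{\rm orb}$ are bounded, cf.\ \eqref{deg2}--\eqref{degcep}.

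Your second part --- pulling back via a local uniformization over $X\setminus U$, where $p^\star\omega_{\rm orb}$ is smooth, using the $\mathcal C^0$ bounds to get an $L^\infty$ endomorphism, and then extending the HE equation across the exceptional set by a removable-singularity and elliptic bootstrap argument --- is essentially the paper's Theorem \ref{reg} (together with Theorems \ref{orbi, I}, \ref{orbi, II} and Corollary \ref{order0}, which supply the two-sided $\mathcal C^0$ comparison with a smooth orbifold metric and the $L^2$ bound on $D'H$ needed to pass the equation weakly across the singular set). That part of your outline is sound in spirit, but it presupposes the existence of the HE metric and the estimates (ii)--(iii) with respect to $\omega_{\rm orb}$, which is precisely what your first step does not deliver.
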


\noindent In particular, the Hermite--Einstein metric $h_{\cF}$ becomes non-singular when pulled back via the local uniformization maps providing the orbifold structure of $X \setminus U$.
This is the main reason for which we are using the metric $\omega_{\rm orb}$:
we show that the metric $h_{\cF}$ satisfying the estimates in Theorem~\ref{HEquot} and the Hermite--Einstein equation with respect to $\omega_{\rm orb}$ is automatically smooth in orbifold sense, cf.~Theorem~\ref{reg}.
\smallskip

\noindent Theorem \ref{HEquot} is proved along the same lines as Theorem \ref{HE}. However, we have to deal with an additional layer of 
difficulties: we are obliged to work with the metric $\omega_{\rm orb}$ (whose inverse image 
on a desingularization of $X$ and $\cF$
has both poles and zeroes), for the reasons just mentioned. Then, prior to applying the result \cite{Phong} we have to construct 
a family of smooth metrics approximating (the pull-back of) $\omega_{\rm orb}$, such that the hypothesis in \cite{Phong} are satisfied.
This is done by using an appropriate family of Monge-Ampère equations, cf. Sections 6 and 7. 
\smallskip

\begin{remark}
Assume that $X$ is projective, and let $H_1,\dots H_{n-2}$ be a set of hyperplane sections, such that $S:= \cap H_i$ has only quotient singularities
and $S\cap W= \emptyset.$ Then we can choose our set $U$ such that it does not intersects $S$. By Theorem \ref{HEquot}, the resulting metric $h_\cF$ restricted to $S$ has quotient singularities, i.e. it becomes smooth by pull-back via the local uniformisations. 
The conclusion is that the metric $h_\cF$ we construct in Theorem \ref{HEquot} matches the requirements in algebraic geometry, cf. e.g. \cite{GK20}.
\end{remark}
\medskip

\noindent We present next a few applications of Theorems \ref{HE} and \ref{HEquot}. Given a coherent, torsion-free sheaf $\cF$ on a compact Kähler space with at most klt singularities, one can 
consider a metric $h$ as in \eqref{mt2} and define the Chern forms $\theta_k(\cF, h)$ of $\cF|_{X_0}$ by the usual Chern-Weil formulas
(so that in the non-singular context $\theta_k(\cF, h)$ gives a representative for the $k^{\rm th}$ Chern class of $\cF$). In particular, let 
$$\Delta(\cF, h):= \theta_2(\cF, h)- \frac{r-1}{2r}\theta_1^2(\cF, h)$$ 
be the discriminant $(2, 2)$--form defined on $X_0$. 
\medskip

\noindent In Section 5 we establish the following result.
\begin{thm}\label{Chern1} We assume that $(X, \omega_X)$ and $\cF$ are satisfying the hypothesis of Theorem \ref{HE}. Let $h_{\cF}$ be the metric on $\cF$ which verifies the 
HE equation with respect to the metric $\omega_X$. Then the following hold:
\begin{enumerate}
\smallskip

\item[\rm (1)] For any smooth $(n-2, n-2)$-form $\eta$ on $X$ the (improper) integral
$$\int_{X\setminus Z}\Delta(\cF, h_{\cF})\wedge \eta< \infty$$
is convergent.
\smallskip

\item[\rm (2)] Assume that $\dim(X)= 3$ and let $\tau$ be any $(1, 0)$ smooth form with support in $X\setminus W$. Then the equality
$$\int_{X\setminus Z}\Delta(\cF, h_{\cF})\wedge \dbar \tau= 0$$ 
holds.
\smallskip

\item[\rm (3)] Assume that $\dim(X)= 3$. Then we have 
$$\int_{X\setminus Z}\Delta(\cF, h_{\cF})\wedge (\omega_X+ \ddc\varphi)\geq 0$$
where $\varphi$ is any smooth function on $X$ such that $\omega_X+ \ddc\varphi$ is equal to zero locally near $W$.
\end{enumerate}

\end{thm}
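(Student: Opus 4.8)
On $X_0 = X\sm Z$ the sheaf $\cF$ is a holomorphic vector bundle, $h_{\cF}$ is a smooth metric, and the Chern--Weil forms of the Chern connection of $(\cF|_{X_0}, h_{\cF})$ are smooth and $d$-closed; hence $\Delta(\cF, h_{\cF})$ is a smooth $d$-closed $(2,2)$-form on $X_0$. Writing $\Theta := \Theta(\cF, h_{\cF})$ and $\Theta^{\circ}$ for its trace-free part, one has $\Delta(\cF, h_{\cF}) = \tfrac{1}{8\pi^2}\tr(\Theta^{\circ}\wedge\Theta^{\circ})$, so $|\Delta(\cF, h_{\cF})|_{\omx}\le C\,|\Theta|^2_{\omx, h_{\cF,0}}$ pointwise, and — since $h_{\cF}$ is Hermite--Einstein for $\omx$ on $X_0$, whence $\sqrt{-1}\,\Lambda_{\omx}\Theta^{\circ} = 0$ — the pointwise Bogomolov--L\"ubke identity $\Delta(\cF, h_{\cF})\wedge\tfrac{\omx^{n-2}}{(n-2)!} = \tfrac{1}{8\pi^2}|\Theta^{\circ}|^2\,\tfrac{\omx^{n}}{n!}\ge 0$ holds. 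Part~$(1)$ follows at once: for a smooth form $\eta$ on $X$ (bounded with respect to $\omx$) one has $|\Delta(\cF, h_{\cF})\wedge\eta|\le C'\,|\Theta|^2\,dV$, and $\int_{X_0}|\Theta|^2\,dV<\infty$ by Theorem~\ref{HE}(iii).

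For $(2)$ and $(3)$ (so $n = 3$) we transfer the problem to the Rossi desingularization $\pi\colon\wX\to X$ of Theorem~\ref{RoS}: on the compact threefold $\wX$ the sheaf becomes a vector bundle $E = (\pi^{\star}\cF/\Tor)^{\vee\vee}$ carrying a \emph{smooth} metric $h_E$, with $\pi$ an isomorphism over $X_0$ and $\pi^{\star}h_{\cF} = h_E\,e^{\tilde s}$ there, $\tilde s = \pi^{\star}s$. Set $\Xi := \pi_{\star}\Delta(E, h_E)$, a $d$-closed current on $X$ whose restriction to $X_0$ is $\Delta(\cF, h_{\cF,0})$. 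By the Bott--Chern transgression, $\Delta(\cF, h_{\cF}) = \Delta(\cF, h_{\cF,0}) + \ddc\Psi$ on $X_0$, where $\Psi$ is a universal form, linear in $s$ and in the curvature of $h_{\cF,0}$ and quadratic in $D's$. Testing against a smooth closed form $\beta$ on $X$ with logarithmic cut-offs $\theta_{\epsilon}$ vanishing near $Z$, and using that $\Delta(\cF,h_{\cF})\wedge\beta\in L^1$ by $(1)$, one obtains
\[
\int_{X\sm Z}\Delta(\cF, h_{\cF})\wedge\beta \;=\; \langle \Xi, \beta\rangle \;+\; \lim_{\epsilon\to 0}\int_{X_0}\theta_{\epsilon}\,\ddc\Psi\wedge\beta ,
\]
and the crux is that the second term vanishes whenever $\Supp\beta\subset X\sm W$.

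Granting this vanishing, both assertions follow. For $(2)$, take $\beta = \dbar\tau$; since $\Delta\wedge\partial\tau = 0$ by type on a threefold,
\[
\int_{X\sm Z}\Delta(\cF, h_{\cF})\wedge\dbar\tau = \langle\Xi, \dbar\tau\rangle = \int_{\wX}\Delta(E, h_E)\wedge\dbar(\pi^{\star}\tau) = \int_{\wX}d\big(\Delta(E, h_E)\wedge\pi^{\star}\tau\big) = 0
\]
by Stokes on the compact manifold $\wX$ and $d\Delta(E, h_E) = 0$. For $(3)$, take $\beta = \omega_{\varphi} := \omx + \ddc\varphi$, which vanishes near $W$ by hypothesis; then $\int_{X\sm Z}\Delta(\cF, h_{\cF})\wedge(\omx + \ddc\varphi) = \langle\Xi, \omega_{\varphi}\rangle = \langle\Xi, \omx\rangle$, because $\Xi$ is closed and $[\omega_{\varphi}] = [\omx]$, and $\langle\Xi, \omx\rangle = \int_{\wX}\Delta(E, h_E)\wedge\pi^{\star}\omx = [\Delta(E)]\cdot\pi^{\star}[\omx]$. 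This number is $\ge 0$: by Toma's theorem~\cite{Tom19} together with Theorem~\ref{RoS}, $E$ is stable for all small K\"ahler perturbations $\omega_t$ of $\pi^{\star}\omx$; the associated $\omega_t$-Hermite--Einstein metrics $h_{E,t}$ satisfy the pointwise Bogomolov--L\"ubke inequality $\Delta(E, h_{E,t})\wedge\omega_t\ge 0$ on $\wX$ (here $n-2=1$), and $[\Delta(E)]\cdot\pi^{\star}[\omx] = \lim_t\int_{\wX}\Delta(E, h_{E,t})\wedge\omega_t\ge 0$.

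The step I expect to be the main obstacle is precisely the vanishing of $\lim_{\epsilon}\int_{X_0}\theta_{\epsilon}\,\ddc\Psi\wedge\beta$. After one integration by parts it becomes $-\lim_{\epsilon}\int_{X_0}d\theta_{\epsilon}\wedge d^c\Psi\wedge\beta$, and one must estimate the resulting boundary integrals over the shrinking annuli around $Z$, pulled back to $\wX$: these are controlled by $|s|$, $|\Theta_{h_{\cF,0}}|$, $|D's|$ and $|\dbar D's|$. Theorem~\ref{HE}(ii) gives $|s|\le C(-\phi_Z)$, and the \emph{weighted} estimate $\int_{X_0}\tfrac{1}{-\phi_Z}|D's|^2\,dV\le C$ of Theorem~\ref{HE}(iii) is exactly what absorbs the logarithmic blow-up of $|d\theta_{\epsilon}|$ near $Z$ and forces these boundary terms to $0$ — but only away from $\pi^{-1}(W)$, where the degeneracy of $\pi^{\star}\omx$ along the exceptional locus makes the pulled-back estimates too weak; this is why $\Supp\tau$, respectively $\Supp\omega_{\varphi}$, must avoid $W$. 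Carrying this out — organizing the integration by parts so that only the quantities bounded in Theorem~\ref{HE}(ii)--(iii) enter, and verifying the boundary estimates uniformly over the strata of $Z$ contained in $X\sm W$, including the exceptional divisor of $\pi$ — is the technical heart of the argument.
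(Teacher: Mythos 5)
Part (1) of your proposal is fine and coincides with the paper's argument (pointwise $|\Delta|\le C|\Theta|^2$ plus the $L^2$ curvature bound of Theorem \ref{HE}(iii)). For (2) and (3), however, your argument has a genuine gap, and you name it yourself: everything is reduced to the vanishing of $\lim_{\ep\to 0}\int_{X_0}\theta_\ep\,\ddc\Psi\wedge\beta$ for the Bott--Chern transgression $\Psi$ between $h_{\cF,0}$ and $h_\cF$, and this is precisely where the whole difficulty of the theorem sits -- it is not carried out. The estimates you propose to use are not obviously (and probably not actually) strong enough: $\Psi$ contains terms of the type $\Tr\big(s\cdot\Theta\big)$ and $D's\wedge\dbar s$, and Theorem \ref{HE} only gives $|s|\lesssim -\phi_Z$, $\Theta(\cF,h_\cF)\in L^2$ and the weighted bound $\int(-\phi_Z)^{-1}|D's|^2<\infty$; after the integration by parts that moves $\ddc$ onto $\theta_\ep\beta$, the boundary terms involve $|d\theta_\ep|$ (which blows up logarithmically on shrinking neighbourhoods of $Z$) multiplied by quantities that are merely $L^2$ or log-growing, and no cancellation is exhibited. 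Your heuristic for why the argument should work away from $W$ is also off target: the exceptional locus of the Rossi map $\pi$ and the degeneracy of $\pi^\star\omega_X$ occur over \emph{all} of $Z$, not only over $W$, so the distinction ``$\Supp\beta\subset X\sm W$'' buys nothing within your framework. What actually makes $X\sm W$ special in the paper is the quotient-singularity/$\Q$-bundle structure there: passing to local smooth covers $p:B\to V$, the HE metric satisfies the two-sided bound $C^{-1}h_0\le h_{\sE}\le Ch_0$ (Corollary \ref{order0}, via Moser iteration) and the connection coefficients lie in a weighted $L^2$ space (Corollary \ref{order1}); the vanishing of the cut-off terms is then proved by writing $\Theta(\sE,h_\sE)=\dbar A$ locally, Cauchy--Schwarz, and an explicit estimate of the truncation integrals (Poincar\'e--Lelong for the hypersurface equation of $X$ in $\CC^4$ and the Newton potential in $\CC^3$). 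None of these inputs appear in your proposal, and without them the claimed vanishing is unsubstantiated.

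Apart from this, the reduction itself is sound and partly attractive: the identity $\int_{X\sm Z}\Delta(\cF,h_{\cF,0})\wedge\beta=\int_{\wh X}\Delta(E,h_E)\wedge\pi^\star\beta$, the Stokes argument for $\beta=\dbar\tau$, and the positivity $[\Delta(E)]\cdot\pi^\star[\omega_X]\ge 0$ obtained from Toma's stability of $E$ for $\omega_t$ and the Bogomolov--L\"ubke inequality for the $\omega_t$-HE metrics as $t\to 0$ are all correct, and the latter is a genuinely different (cohomological) treatment of the main term in (3) than the paper's, which instead uses the pointwise inequality $\Delta(\cF,h_\cF)\wedge\omega_X\ge 0$ for the singular HE metric itself, truncation around the finite set $W$, the integration by parts justified by part (2), and the $L^2$ curvature bound to kill the boundary term (using that $\varphi$ can be chosen to vanish to second order at $W$). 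But as it stands, both (2) and (3) rest on the unproven transgression/boundary estimate, so the proposal does not yet constitute a proof.
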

\smallskip

\begin{remark} The shortcoming of this result is that we ignore for the moment wether the linear map 
induced by $\Delta(\cF, h)$ on the space of forms with compact support on $X\setminus W$ thanks to point (2) depends on $h$. \end{remark}

\begin{remark} We believe that the equality in (2) above should hold regardless to the dimension of $X$. 
\end{remark}
\medskip



\noindent Anyway, we show that Theorem~\ref{HEquot} can be used in order to establish the following statement, which represents a particular case of the conjecture formulated in~\cite[Conj.~0.1]{CHP22}.
\begin{thm}\label{Chern2} Let $X$ be a compact klt Kähler threefold. We assume that $X$ admits a partial desingularisation as in \cite{LT19}. 
\footnote{This is claimed in the recent article \cite{O}}
Then the inequality at the point $(3)$ above holds true if the discriminant is defined by using the metric constructed in Theorem \ref{HEquot}.
\end{thm}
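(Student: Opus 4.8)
The plan is to deduce Theorem~\ref{Chern2} from Theorem~\ref{HEquot} by combining the three items of Theorem~\ref{Chern1}, transposed to the orbifold setting provided by $\omega_{\rm orb}$. First I would fix a klt Kähler threefold $X$ together with its partial desingularisation $p\colon \wh X\to X$ as in \cite{LT19}, the analytic set $W$ with $\codim_X W\ge 3$, and an open neighbourhood $U\supset W$ as in Lemma~\ref{omorb}, so that $\omega_{\rm orb}$ is smooth on some $U'$ with $W\subset U'\Subset U$ and has conic singularities on $X\setminus U$. Since we are in dimension three and $\codim W\ge 3$, the set $W$ is a finite set of points, which will be the crucial simplification: the ``bad locus'' $U$ can be taken to be an arbitrarily small union of coordinate balls around these points, and $\om_X+\ddc\vp$ in statement~(3) of Theorem~\ref{Chern1} is supported away from those points.

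Next I would verify that the metric $h_\cF$ from Theorem~\ref{HEquot}, which is Hermite--Einstein with respect to $\omega_{\rm orb}$ on $X_0$ and has the estimates (ii), (iii) there, satisfies the integrability statements of Theorem~\ref{Chern1}(1) and (2) with $\om_X$ replaced by $\om_{\rm orb}$. The argument for (1) is local near $Z\cup W$ and should go through verbatim using the bounds $\Tr\exp(s)\le C$, $\exp(s)\ge Ce^{N\phi_Z}\Id_\cF$, and $\int_{X_0}|\Theta(\cF,h_\cF)|^2\,dV<\infty$, exactly as in the proof of Theorem~\ref{Chern1}; the only point to check is that the conic singularities of $\om_{\rm orb}$ along $X\setminus U$ do not spoil convergence, and this follows because in orbifold charts $h_\cF$ is honestly smooth (by the regularity statement, Theorem~\ref{reg}), so $\Delta(\cF,h_\cF)$ is a smooth orbifold form there and pairs against any smooth $\eta$ with a finite integral, while near the finitely many points of $W$ one uses that these have (real) codimension six and the relevant forms are $L^1$. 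Statement (2), the closedness $\int_{X\setminus Z}\Delta(\cF,h_\cF)\wedge\dbar\tau=0$ for $\tau$ supported in $X\setminus W$, is again the same integration-by-parts argument as in Theorem~\ref{Chern1}(2): the Hermite--Einstein equation forces $\Delta(\cF,h_\cF)\wedge(\text{anything closed})$ to integrate correctly, and the support condition on $\tau$ lets one ignore $W$.

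Then comes the positivity. I would choose $\vp$ smooth on $X$ with $\om_{\rm orb}+\ddc\vp=0$ identically on a neighbourhood of $W$ — possible since $\om_{\rm orb}$ is smooth near $W$ and $W$ is a finite set, so one can flatten the potential there — and consider $\int_{X\setminus Z}\Delta(\cF,h_\cF)\wedge(\om_{\rm orb}+\ddc\vp)$. Writing $\om_{\rm orb}+\ddc\vp$ as a limit of the smooth metrics approximating it away from $W$, and using that $\Delta(\cF,h_\cF)$ is, by the Hermite--Einstein equation, a nonnegative $(2,2)$-form in the sense of Simpson's Bogomolov-type inequality (the trace-free part of the curvature pairs to $\ge 0$ against a positive $(1,1)$-form), one gets $\ge 0$; the contribution near $W$ vanishes because $\om_{\rm orb}+\ddc\vp\equiv 0$ there, and the contribution near $Z$ is handled by the integrability from (iii) together with a cutoff argument. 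Finally, since $\om_{\rm orb}$ differs from the ``conic'' data only by the maximum-procedure modification on $U$, and since on $U\setminus U'$ one can interpolate, the inequality obtained with $\om_{\rm orb}$ is exactly the inequality claimed (for the discriminant defined via the metric of Theorem~\ref{HEquot}).

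\emph{Main obstacle.} The delicate point is the positivity step: making rigorous that $\Delta(\cF,h_\cF)\wedge\theta\ge 0$ when integrated, where $\theta=\om_{\rm orb}+\ddc\vp$ is only a \emph{degenerate, conic} Kähler current and not a smooth metric, while $h_\cF$ itself is merely $\cC^0$-close to (and not smoothly equal to) the orbifold metric outside $U$. One must control the boundary terms coming from the interface $\d U$ and from the conic locus $X\setminus U$ simultaneously, and ensure that the approximation of $\theta$ by smooth forms interacts well with the $L^2$ curvature bound on $\cF$; this is where the estimates (ii)--(iii) of Theorem~\ref{HEquot} and the orbifold regularity of Theorem~\ref{reg} must be used in tandem, and it is essentially the reason the conic metric $\om_{\rm orb}$ — rather than a naive Kähler metric on $X$ — was introduced in the first place.
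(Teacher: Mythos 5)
There is a genuine gap, and it sits exactly at the point your last paragraph tries to wave away. What your argument produces is the positivity of $\int\Delta(\cF,h_\cF)\wedge(\omega_{\rm orb}+\ddc\phi)$ — essentially the content of Theorem~\ref{part-11} (which, note, needs an extra hypothesis on the singular locus, namely that the non-orbifold points are disjoint from the rest of $X_{\rm sing}$, precisely so that Theorem~\ref{reg} applies along the one-dimensional singular strata where the cutoff lives). But Theorem~\ref{Chern2} asserts the inequality of point (3) of Theorem~\ref{Chern1}, i.e.\ positivity against $\omega_X+\ddc\varphi$, where $\omega_X$ is the original Kähler metric. The Hermite--Einstein condition for $h_\cF$ is with respect to $\omega_{\rm orb}$, so the pointwise Bogomolov inequality only gives $\Delta(\cF,h_\cF)\wedge\omega_{\rm orb}\geq 0$; it is false in general that the trace-free curvature pairs nonnegatively against an arbitrary positive $(1,1)$-form, so you cannot simply substitute $\omega_X$. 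Passing from the class $\{\omega_{\rm orb}\}=C\{\omega_X\}$ to $\{\omega_X\}$ would require integrating $\Delta(\cF,h_\cF)$ against $\ddbar$ of the merely continuous potential $\varphi$ entering $\omega_{\rm orb}=C\omega_X+\sqrt{-1}\ddbar\varphi$, across both the conic locus and $W$; this is not covered by Theorem~\ref{Chern1}(2) (which needs smooth forms supported away from $W$), and the paper explicitly flags after Theorem~\ref{Chern1} that it does not even know whether such pairings are independent of the metric. So "one can interpolate" is not a proof step — it is the whole difficulty.

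The paper's route is genuinely different and uses the hypothesis you never invoke: the partial desingularisation $\pi:\wt X\to X$. One sets $\wt\cF:=(\pi^\star\cF/\Tor)^{\star\star}$, shows it is a $\Q$-sheaf outside finitely many points of $\wt X$, and that it is stable with respect to the family of orbifold metrics $\omega_\ep=\pi^\star\omega_X+\ep\,\omega_{\wh X}$ for $\ep\ll 1$. Theorem~\ref{HEquot} applied for each $\ep$ gives HE metrics $h_\ep$ and the inequality $\int\Delta(\wt\cF,h_\ep)\wedge\omega_\ep\geq 0$; one then shows that the pairing with a closed form of compact support in the $\Q$-sheaf locus is unchanged if $h_\ep$ is replaced by a fixed metric $h_1$ (this is where the $\Q$-structure enters), decomposes $\omega_\ep+\sqrt{-1}\ddbar\phi=\pi^\star(\omega_X+\sqrt{-1}\ddbar\psi)+\ep\eta$, and lets $\ep\to 0$ to obtain the inequality against $\pi^\star(\omega_X+\sqrt{-1}\ddbar\psi)$, i.e.\ against the original class of $\omega_X$ as required. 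Without this limiting mechanism (or some substitute proving independence of the pairing from the choice of metric and of the representative in the class), your argument establishes the $\omega_{\rm orb}$-version only, not the stated theorem.
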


\noindent The proof of Theorem \ref{Chern2} can be obtained along the following lines. By hypothesis, we have a partial resolution 
$\pi: \wt X\to X$ of $X$, so that $\wt X$ has at most orbifold singularities, and $\pi$ is an isomorphism over a set whose complement is of codimension at least three in $X$. There exists a reflexive sheaf $\wt \cF$ which is isomorphic with the $\pi$-inverse image of $\cF$ 
and which is a $\mathbb Q$-sheaf \emph{in the complement of a finite set of points of $\wt X$}. 
Moreover, $\wt \cF$ is stable with respect to a well-chosen family of orbifold metrics, say $(\omega_\ep)_{\ep> 0}$.  
The main point is that, as consequence of Theorem \ref{HEquot}, we infer that for each $\ep$, the Chern classes inequality holds --and a limiting argument, allows us to 
conclude. Of course, if for some reason $\wt \cF$ has a $\mathbb Q$-sheaf structure on the entire $\wh X$, the sought-after inequality follows from the orbifold version of Uhlenbeck-Yau theorem, cf. \cite{Fau}, combined with the interpretation of the orbifold Chern classes in \cite{CGG}.
\smallskip

\noindent The rest of our article is organised as follows.

\tableofcontents

\medskip

\section{Mean-value inequality}\label{MV}

\noindent In this section we first establish the connection between the metric $h_{\cF, 0}$ constructed in Section \ref{intro}
and the desingularisation result in \cite{Ros68}.
Then we
recall a fundamental theorem due to \cite{Phong}, which is the main 
technical tool for the proof of the aforementioned results.
\medskip

\subsection{Desingularisation of sheaves} Let $\cF$ be a coherent sheaf on $X$. 
Our first results here are dealing with positivity properties of the metric $h_{\cF, 0}$ in \eqref{mt2}.
\begin{lemme}\label{prel1}
We denote by $\langle, \rangle_i$ and $\langle, \rangle_j$ the scalar products defined by \eqref{mt2}, corresponding to 
$\cA_i$ and $\cA_j$, respectively. There exists a positive constant $C_{ij}> 0$ such that 
for any section $s$ of $\displaystyle \cF|_{\cA_i\cap \cA_j}$ the following relation
\begin{equation}\label{comp1}
C_{ij}^{-1}< \frac{|s|_{i}}{|s|_{j}}< C_{ij}
\end{equation}
holds (modulo shrinking a little the trivialising sets $\cA_i$).
\end{lemme}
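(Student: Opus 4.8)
The statement asserts that the two metrics on $\cF$ constructed from the two presentations over $\cA_i$ and $\cA_j$ are mutually bounded on the overlap. The essential point is that a finitely generated module over the local ring of holomorphic functions admits, up to shrinking, only ``boundedly many'' presentations: any two are related by holomorphic (hence locally bounded) matrices, plus a contribution from the partition of unity.

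\medskip

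First I would unwind the definition \eqref{mt2}. Over $\cA_i \cap \cA_j$, a section $s$ of $\cF$ has, for each $\alpha$ appearing in the cover, a distinguished lift $\sigma_{\alpha}^{(i)} \in \O_{A_\alpha}^{q_\alpha}$ characterized by $\pi_\alpha(\sigma_\alpha^{(i)}) = s$ and $\sigma_\alpha^{(i)} \perp \im(\text{first map})$; the value $|s|_i^2$ is $\sum_\alpha \rho_\alpha |\sigma_\alpha^{(i)}|_\alpha^2$. Crucially the lifts $\sigma_\alpha$ (for the various $\alpha$) do not depend on whether we are computing $|s|_i$ or $|s|_j$ — they are intrinsic to the presentation indexed by $\alpha$. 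So in fact \emph{the two metrics $|\cdot|_i$ and $|\cdot|_j$ differ only through the choice of partition of unity} if we use the \emph{same} finite cover $(A_\alpha)$ in both cases. Let me therefore first address the case that is genuinely at stake: the metrics $h_{\cF,0}$ built from two \emph{different} auxiliary presentations of $\cF$, say from sequences
\[
\O_{\cA_i}^{p} \xrightarrow{a} \O_{\cA_i}^{q} \xrightarrow{\pi} \cF|_{\cA_i} \to 0, \qquad
\O_{\cA_j}^{p'} \xrightarrow{a'} \O_{\cA_j}^{q'} \xrightarrow{\pi'} \cF|_{\cA_j} \to 0.
\]

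\medskip

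The key step is a standard comparison-of-presentations argument. On the overlap $\cA_i \cap \cA_j$, since $\O^{q}$ is free and $\pi'$ is surjective, after possibly shrinking to a relatively compact subset (this is where the parenthetical ``modulo shrinking'' enters), one can lift the composite $\O^q \xrightarrow{\pi} \cF \xleftarrow{\pi'} \O^{q'}$ to a holomorphic matrix $B \colon \O^q \to \O^{q'}$ with $\pi' \circ B = \pi$, and symmetrically $B' \colon \O^{q'} \to \O^q$ with $\pi \circ B' = \pi'$. The entries of $B, B'$ are holomorphic functions on $\cA_i \cap \cA_j$, hence bounded on any relatively compact subset by some constant depending on $i,j$ — call it $C'_{ij}$. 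Now given a section $s$ and its distinguished lift $\sigma = \sigma^{(i)}$ with respect to $\pi$, the element $B\sigma$ is \emph{a} lift of $s$ with respect to $\pi'$, so it differs from the distinguished lift $\sigma' = \sigma^{(j)}$ by an element of $\ker \pi' = \im a'$; since $\sigma'$ is the orthogonal projection onto $(\im a')^\perp$, we have $|\sigma'|_\alpha \le |B\sigma|_\alpha \le C'_{ij}\, |\sigma|_\alpha$ (the middle inequality using that orthogonal projection does not increase norm, the last using the bound on $B$). Symmetrically $|\sigma|_\alpha \le C'_{ij}\,|\sigma'|_\alpha$. Summing against the partition of unity $\sum_\alpha \rho_\alpha(\cdot) = 1$ yields $C_{ij}^{-1} \le |s|_i / |s|_j \le C_{ij}$ with $C_{ij} := C'_{ij}$, which is \eqref{comp1}. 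If one insists on also allowing a change of the partition of unity / of the covering $(A_\alpha)$, one first reduces to a common refinement and absorbs the (bounded, positive, continuous) ratios of partition functions into the constant.

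\medskip

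\textbf{Main obstacle.} The only subtlety — and the reason for the ``shrinking'' caveat — is that the lifting matrices $B, B'$ are merely holomorphic, not bounded, on the open sets $\cA_i \cap \cA_j$ themselves; one needs to pass to a relatively compact shrinking to get the uniform bound $C'_{ij}$. (Equivalently: the comparison is local, and one uses a finiteness/compactness argument to globalize it over the overlap.) A secondary point one should check is that $\pi' \circ B = \pi$ can indeed be solved with holomorphic $B$ — this is exactly the lifting property of the free module $\O^q$ against the surjection $\pi'$, valid on a Stein (e.g. ball) neighbourhood, so no issue beyond shrinking. Everything else is formal: orthogonal projection is norm-decreasing, and a finite convex combination of comparable quantities is comparable with the same constants.
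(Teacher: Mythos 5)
Your proposal is correct and follows essentially the same route as the paper: on the overlap one lifts one presentation through the other to get a holomorphic comparison morphism (the paper's $\gamma$ with $\beta_i\circ\gamma=\beta_j$), bounds it after shrinking to a relatively compact subset, and uses that the distinguished lift is the minimal-norm (orthogonal) one, so $|\xi|_j=|\sigma_j|\geq C^{-1}|\gamma(\sigma_j)|\geq C^{-1}|\xi|_i$, with the reverse inequality by symmetry. Your extra remarks about the partition of unity are harmless but not needed, since the comparison at stake is exactly between the local quotient metrics of the two presentations.
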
 

\noindent The main point here is that we can compare the metrics above despite of the non-compactness of $X_0$.

\begin{proof} We recall that for each index $i$ metric $\displaystyle h_{\cF, i}$ is defined
via the formula
\begin{equation}\label{re7}
\langle s_1, s_2\rangle_i:= \langle \sigma_1, \sigma_2\rangle
\end{equation}
where $s_\alpha$ are sections of $\cF|_{\cA_i}$ and $\sigma_\alpha$ are sections of ${\mathcal O}^q_{\cA_i}$, for $\alpha= 1,2$ such that
\begin{equation}\label{ross1}
\beta(\sigma_\alpha)= s_\alpha, \qquad \langle \sigma_\alpha, \tau\rangle= 0
\end{equation} 
for any $\tau \in \ker(\beta)$. The scalar product on the RHS of \eqref{re7} is the usual one. 
\smallskip

\noindent When restricted to $\cA_{i, 0}= \cA_{i}\cap X_0$, the expression \eqref{re7} is point-wise 
defined, and it equals the quotient metric on $\displaystyle V_0|_{\cA_{i, 0}}$. 
In order to verify the claim of our lemma, let 
\begin{equation}\label{ross8}
\beta_k: {\mathcal O}^{q_k}_{\cA_k}\to \cF|_{\cA_k}
\end{equation} 
be the projections \eqref{mt1} corresponding to $k= i, j$. We can assume that there exists a morphism 
\begin{equation}\label{ross9}
\gamma: {\mathcal O}^{q_j}\to {\mathcal O}^{q_i}, \qquad \beta_i\circ \gamma= \beta_j
\end{equation}  
on the intersection $\cA_i\cap \cA_j$. Let $\sigma_j$ be a section of ${\mathcal O}^{q_j}$ such that $\beta_j(\sigma_j)= \xi$,
and such that moreover $\sigma_j$ is orthogonal to the kernel of $\beta_j$ (at the point we want to establish our inequality).
According to the definition, we have 
\begin{equation}\label{ross10}
|\xi|_j= |\sigma_j|.
\end{equation} 
On the other hand, there exists a positive constant $C> 0$ such that we have
\begin{equation}\label{ross11}
|\gamma (\tau)|\leq C|\tau|
\end{equation}  
(it is at this point that we might have to shrink the sets $\cA_i$) so all in all we get
\begin{equation}\label{ross12}
|\xi|_j= |\sigma_j|\geq  C^{-1}|\gamma (\sigma_j)|\geq C^{-1}|\xi|_i,
\end{equation}
where the last equality holds by the second part of \eqref{ross9}.
\end{proof}
\medskip

\noindent As consequence, we obtain the following result
\begin{proposition}\label{reflex} Let $\cF$ be a reflexive sheaf on $X$. Then it admits a metric $h_{}$ which is smooth on $X_0$ and such that
$$\sqrt{-1}\Theta(\cF, h_{})\geq -C\omega_X\otimes \Id_\cF$$
holds true at each point of $X_0$, where $C> 0$ is a positive constant. 
\end{proposition}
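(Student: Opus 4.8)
The plan is to check that the metric $h:=h_{\cF,0}$ produced by \eqref{mt2} already satisfies the estimate; since the analytic set $Z$ was chosen to contain the locus where $\cF$ fails to be locally free, $\cF|_{X_0}$ is an honest holomorphic vector bundle and the only thing to prove is a lower bound for its Chern curvature at points of $X_0$. The starting remark is the one already recorded after \eqref{re7}: over $\cA_i\cap X_0$ the local metric $h_{\cF,i}$ is precisely the quotient metric on $\cF|_{\cA_i\cap X_0}$ induced by the surjection of vector bundles $\pi_i\colon\mathcal O^{q_i}_{\cA_i}\twoheadrightarrow\cF|_{\cA_i}$, the source carrying its trivial flat metric. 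A holomorphic quotient of a Griffiths semi-positive bundle is again Griffiths semi-positive, so
$$\sqrt{-1}\,\Theta\bigl(\cF,h_{\cF,i}\bigr)\ge 0\qquad\text{on }\cA_i\cap X_0,$$
and in particular $\ge -C\,\omega_X\otimes\Id_\cF$ there for every $C\ge 0$. All the work therefore consists in controlling the effect of averaging the $h_{\cF,i}$ by the partition of unity $(\rho_i)$.

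To do this I would fix an index $j$, work over $\cA_j\cap X_0$, and write $h_{\cF,0}=h_{\cF,j}\cdot a_j$ for the smooth, positive, $h_{\cF,j}$-self-adjoint endomorphism $a_j=\sum_i\rho_i\,h_{\cF,j}^{-1}h_{\cF,i}$; by Lemma~\ref{prel1}, after the finitely many admissible shrinkings, $C^{-1}\Id\le a_j\le C\Id$ with a uniform constant. Recalling that for $h'=h\cdot a$ one has $\sqrt{-1}\,\Theta(\cF,h')=\sqrt{-1}\,\Theta(\cF,h)+\sqrt{-1}\,\bar\partial\bigl(a^{-1}D'_h a\bigr)$, with $D'_h$ the $(1,0)$-part of the Chern connection of $h$ on $\End\cF$, it remains to bound $\sqrt{-1}\,\bar\partial\bigl(a_j^{-1}D'_{h_{\cF,j}}a_j\bigr)$ by $C\,\omega_X\otimes\Id_\cF$ uniformly on $X_0$. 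Expanding $a_j=\sum_i\rho_i\,h_{\cF,j}^{-1}h_{\cF,i}$ and using $\sum_i\rho_i=1$, $\sum_i\partial\rho_i=0$ together with the holomorphic comparison maps from the proof of Lemma~\ref{prel1}, i.e.\ the holomorphic $\gamma\colon\mathcal O^{q_j}\to\mathcal O^{q_i}$ with $\pi_i\circ\gamma=\pi_j$ on $\cA_i\cap\cA_j$ as in \eqref{ross9}, one splits this term into: contributions of the derivatives of the $\rho_i$, which are dominated by $C\,\omega_X$ because the $\rho_i$ are smooth on the compact space $X$ and $\omega_X$ is locally $dd^c$ of a strictly plurisubharmonic potential; and contributions expressed through $\gamma^\ast\gamma$ and the orthogonal projections onto $(\ker\pi_i)^{\perp}$. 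The decisive point is that, although the Chern connection forms of the individual quotient metrics $h_{\cF,i}$ blow up as one approaches $Z$, in $a_j^{-1}D'_{h_{\cF,j}}a_j$ they only enter through the differences prescribed by the holomorphic maps $\gamma$, and these remain bounded on $X_0$: informally, all the $h_{\cF,i}$ degenerate near $Z$ in the same holomorphic fashion.

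The essential difficulty is precisely this last uniformity across $Z$. Away from $Z$ the statement is a routine estimate comparing Chern curvatures of uniformly equivalent smooth metrics on a compact manifold; near $Z$ the quotient metrics $h_{\cF,i}$, and their connection forms, genuinely degenerate, so one must exploit that they degenerate coherently, which is exactly the information extracted in the proof of Lemma~\ref{prel1}. This is why the proposition is stated as a consequence of that lemma. (When $X$ is projective there is a short-cut making the whole argument transparent: $\cF$ is then a global quotient of some $\mathcal O_X(-m)^{\oplus N}$, and the quotient metric induced by an arbitrary smooth metric on $\mathcal O_X(-m)$ manifestly satisfies $\sqrt{-1}\,\Theta(\cF,h)\ge -C\,\omega_X\otimes\Id_\cF$ on $X_0$; in the general K\"ahler case the local presentations together with Lemma~\ref{prel1} serve as a replacement for such a global quotient.)
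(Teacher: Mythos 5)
Your proposal has a genuine gap at its central step. After writing $h_{\cF,0}=h_{\cF,j}\cdot a_j$ you reduce everything to the claim that $\sqrt{-1}\,\bar\partial\bigl(a_j^{-1}D'_{h_{\cF,j}}a_j\bigr)\geq -C\,\omega_X\otimes\Id_\cF$ uniformly on $X_0$, and you justify it by saying that the connection forms of the $h_{\cF,i}$ enter only through differences governed by the holomorphic comparison maps $\gamma$ and that these ``remain bounded on $X_0$''. This is precisely the point that is not established and cannot be extracted from Lemma \ref{prel1}: that lemma is a purely zeroth-order statement ($C^{-1}\le a_j\le C$), and two uniformly equivalent metrics can have completely unrelated curvatures. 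The term $\bar\partial\bigl(a_j^{-1}D'_{h_{\cF,j}}a_j\bigr)$ involves two derivatives of the ratio endomorphisms $h_{\cF,j}^{-1}h_{\cF,i}$; the relation $\pi_i\circ\gamma=\pi_j$ gives no control on the derivatives of the orthogonal projections onto $(\ker\pi_i)^{\perp}$, which can oscillate without bound as one approaches $Z$, so the asserted boundedness of the connection differences (let alone of their $\bar\partial$) is exactly the hard statement, and your argument assumes it rather than proves it. In short, the direct average $\sum_i\rho_i h_{\cF,i}$ of the (Griffiths semi-positive) quotient metrics is not known to satisfy the lower curvature bound, and the proposal does not close this.

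The paper avoids this difficulty by working on the dual. On $\cF^\star|_{\cA_\alpha}$ the metric $h_\alpha^\star$ is an induced (sub-bundle) metric inside a trivial bundle, hence Griffiths semi-negative, which translates into the quasi-plurisubharmonicity of $\log|\xi|^2_{h_\alpha^\star}$ for every local holomorphic section $\xi$. The gluing is then performed at the level of these functions, via $\log\bigl(\sum_i\theta_i^2|\xi|^2_{h_i^\star}\bigr)$, and Demailly's gluing lemma (\cite{Dem92}, Lemma 3.5) yields $\sqrt{-1}\ddc\log\bigl(\sum_i\theta_i^2|\xi|^2_{h_i^\star}\bigr)\geq -C\omega_X$ using \emph{only} the mutual $C^0$-comparability of the local metrics (i.e.\ Lemma \ref{prel1}) and the smoothness of the cut-offs --- no derivative control on the $h_i^\star$ near $Z$ is ever needed. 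Dualizing the glued metric on $\cF^\star$ gives the metric $h$ on $\cF=\cF^{\star\star}$ with $\sqrt{-1}\Theta(\cF,h)\geq -C\omega_X\otimes\Id_\cF$ on $X_0$. If you want to salvage your approach, you would either have to prove the missing $C^2$-type uniformity of the ratios $h_{\cF,j}^{-1}h_{\cF,i}$ near $Z$ (which is not available and is not how the construction is meant to work), or switch, as the paper does, to a gluing scheme in which lower curvature bounds are stable under partition-of-unity gluing --- and that is what the passage to $\cF^\star$ accomplishes. Your projective-case aside (presenting $\cF$ as a global quotient of $\O_X(-m)^{\oplus N}$) is correct but does not address the general K\"ahler case treated here.
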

\begin{proof} The dual sheaf $\cF^\star$ is equally coherent, so we can
consider the metric $h_\alpha^\star$ induced on $\displaystyle \cF^\star|_{\cA_\alpha}$ by the dual of $\langle, \rangle_\alpha$. Griffiths's formula implies that we have 
\begin{equation}\label{re1}
\sqrt{-1}\Theta(\cF^\star, h^\star_{\alpha})\leq 0
\end{equation}
at each point of the intersection $\cA_\alpha\cap X_0$. 

\noindent By Lemma \ref{prel1}, given any pair of overlapping sets $\cA_\alpha\cap \cA_\beta$ there exists a positive constant $C_{\alpha\beta}> 1$ such that 
\begin{equation}\label{re2}
C_{\alpha\beta}^{-1}< \frac{|\xi|_{h_\alpha^\star}}{|\xi|_{h_\beta^\star}}< C_{\alpha\beta}
\end{equation}
for any local holomorphic section $\xi$. 

\noindent This happens to be precisely the condition we need in order to show the existence of a positive constant $C> 0$ such 
that the inequality 
\begin{equation}\label{re3}
\sqrt{-1}\ddc\log\left(\sum_i\theta_i^2|\xi|_{h_i^\star}^2\right)\geq -C\omega_X,
\end{equation}
cf. \cite{Dem92}, Lemma 3.5.
\smallskip

\noindent We denote by $h$ the metric induced on $\cF= \cF^{\star\star}$ by the dual of the metric obtained by glueing the $(h_\alpha^\star)$ by the partition of unit as in \eqref{re3}. The proposition is proved, since \eqref{re3} is equivalent to the inequality we want to establish.
\end{proof}
\medskip

\subsubsection{Connection with a theorem of Rossi}\label{Ross} We discuss next the link between the results above and the "desingularisation" of $\cF$, i.e. Theorem 3.5 in \cite{Ros68}. To this end, we first will briefly recall the general context in \emph{loc. cit.}
\smallskip

\noindent We consider an open cover $(\cA_\alpha)_\alpha$ of $X$, such that we have the exact sequence
\begin{equation}\label{re5}
{\mathcal O}^p_{\cA}\to {\mathcal O}^q_{\cA}\to \cF|_{\cA}\to 0
\end{equation} 
for each restriction of $\cF$ to $\cA$, where $\cA$ is one of the $\cA_\alpha$ above (of course, the integers $p$ and $q$ depend on $\alpha$).

\noindent Then we can construct a rational map
\begin{equation}\label{re4}
f: \cA\dashrightarrow \mathcal G(q-r, q) 
\end{equation} 
into the Grassmannian of $q-r$ planes in $\CC^q$, simply given by the kernel of the projection $$\beta: {\mathcal O}^q_{\cA}\to \cF|_{\cA}$$ of
\eqref{re5} on the open subset of $\cA$ where the dimension of the said kernel is $q-r$. 
\smallskip

\noindent On the manifold $\mathcal G(q-r, q)$ we have the following tautological exact sequence of vector bundles
\begin{equation}\label{re6}
0\to \xi_{q-r}\to \CC^q\to \eta_r\to 0
\end{equation} 
where the fiber of $\xi_{q-r}$ at a point of the Grassmannian $\mathcal G(q-r, q)$ is the subspace of $\CC^q$ corresponding to that 
point, and $\eta_r$ is the quotient bundle.
\medskip

\noindent The following results are established in \cite{Ros68}.

\begin{enumerate}
\smallskip

\item[(i)] Let $\displaystyle V_0:= \cF|_{X_0}$ be the locally free sheaf induced by $\cF$ on $X_0$. Then we have 
$$
V_0|_{\cA_{0}}= f^\star \eta_r,
$$
where we we use the notation $\cA_{0}:= \cA\cap X_0$.
\smallskip

\item[(ii)] For each index $i$ we denote by $\Gamma_i\subset \cA_{i}\times \mathcal G(q-r, q)$ the graph of the rational map $f_i$, cf. \eqref{re4},
equipped with the two projections $\pi_{i1}$ and $\pi_{i2}$. 
Then the several analytic sets $\Gamma_i$ piece together and let $\wt X$ be the resulting compact space. This is equipped with a map
$$
\pi:\wt X\to X
$$
induced locally by the $\pi_{i1}$. Moreover, $\pi$ is a modification. More precisely, Rossi shows that there exists an isomorphism
$$\theta_{ij}:\Gamma_{ij}\to \Gamma_{ji}$$
where $\displaystyle \Gamma_{ij}:= \pi_{i1}^{-1}(\cA_{ij})$ satisfying the cocycle relation\footnote{this is not stated explicitly, but it is a direct consequence of the arguments in \cite{Ros68}}, so that $\wt X$ is defined by the usual quotient of the disjoint union of the 
$\Gamma_i$'s. 
\smallskip

\item[(iii)] The sheaf 
$$E:= \pi^\star(\cF)/\Tor$$ is locally free on $\wt X$, and it is induced locally by the inverse image bundle $\displaystyle \pi_{i2}^\star (\eta_r)$.
\end{enumerate}
\medskip

\medskip

\noindent On the other hand, let $h_i$ be the quotient metric on the vector bundle $\eta_r$ in \eqref{re6}, where the 
flat bundle $\CC^q$ over the Grassmannian is endowed with the trivial metric. Then we have 
\begin{equation}\label{ross3}
h_{\cF, i}= f_\alpha^\star(h_i)
\end{equation} 
on $\cA_{i}\cap X_0$ by the definition of these objects. Dualising and pulling back \eqref{ross3} on the graph $\Gamma_i$ the equality
\begin{equation}\label{ross4}
\pi_{i 1}^\star h_{\cF, i}^\star= \pi_{i 2}^\star(h_i^\star)
\end{equation}
is satisfied for each index $i$, and given (ii) and (iii) above, it follows that we have the equality 
\begin{equation}\label{ross5}
\sum_\alpha \pi_{i1}^\star (\theta_i^2 h_{\cF, i}^\star)= \sum_i (\theta_i^2\circ\pi_{i1}) \pi_{i 2}^\star(h_i^\star)
\end{equation}
pointwise in the complement of a global analytic subset of $\wt X$. 
\smallskip

\noindent The expression on the RHS of \eqref{ross5} clearly extends as a smooth metric $\displaystyle h_{E^\star}$ on the vector bundle $E^\star$. The equality 
\eqref{ross5} combined with \cite{Dem92} shows that the following holds
\begin{equation}\label{ross6}
\sqrt{-1}\ddc\log|\rho|_{h_{E^\star}}^2\geq - C\pi^\star(\omega_X)
\end{equation}
for any local holomorphic section $\rho$ defined near an arbitrary point 
in the complement of an analytic subset of $\wt X$. 
\smallskip

\noindent Let $h_E$ be the metric on $E$ induced by $\displaystyle h_{E^\star}$; as consequence of \eqref{ross6}
we get 
\begin{equation}\label{ross2}
\sqrt{-1}\Theta(E, h_E)\geq -C\pi^\star(\omega_X)\otimes \Id_E
\end{equation}
in the complement of an analytic subset of $\wt X$ and therefore globally since the objects involved are smooth. Moreover, we denote 
that $\wt X$ a desingularisation of $\wt X$ and use the same notation $E$ for the pull-back of the bundle defined at (ii). 
\medskip

\noindent In conclusion, we have established the following result.

\begin{lemme}\label{lowerbound}
Let $\cF$ be a reflexive sheaf on a normal, compact analytic space $X$. We consider the map $\pi:\wh X\to X$ together with the locally free sheaf $E:= \pi^\star(\cF)/\Tor$ given by the theorem of Rossi. Then $E$ admits a Hermitian metric $h_E$ such that \eqref{ross2} holds.
\end{lemme}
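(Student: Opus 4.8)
The plan is to build the metric $h_E$ on $E$ explicitly, as the dual of a metric on $E^\star$ glued together from the local quotient metrics $h^\star_{\cF,i}$ of \eqref{mt2}, and then to read off the curvature lower bound from the gluing estimate of Demailly \cite{Dem92}; the comparison estimate of Lemma \ref{prel1} is precisely what makes that gluing estimate applicable despite the non-compactness of $X_0$.

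Concretely I would proceed as follows. First, invoke Rossi's description recalled in (i)--(iii): on the graph $\Gamma_i \subset \cA_i \times \mathcal G(q-r,q)$ one has $E|_{\Gamma_i} \simeq \pi_{i2}^\star(\eta_r)$ with $\eta_r$ the tautological quotient bundle, and by \eqref{ross3}--\eqref{ross5} the dualised quotient metric satisfies $\pi_{i1}^\star h^\star_{\cF,i} = \pi_{i2}^\star(h_i^\star)$ over the locus where $f_i$ is a morphism, hence over all of $\Gamma_i$ by continuity; here $h_i$ is the quotient metric on $\eta_r$ induced by the flat metric on $\CC^q$. Gluing by a partition of unity $(\theta_i^2)$ subordinate to $(\cA_i)$, I set $h_{E^\star} := \sum_i(\theta_i^2\circ\pi)\,\pi_{i2}^\star(h_i^\star)$. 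The crucial point, which is exactly what Rossi's construction buys us, is that each $\pi_{i2}^\star(h_i^\star)$ is an honest smooth metric on the honest vector bundle $E^\star|_{\Gamma_i}$, with no indeterminacy left on the Rossi space $\wt X$, so that $h_{E^\star}$ is a globally defined smooth Hermitian metric on $E^\star$.

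Second, for the curvature: each $h_i^\star$ is a subbundle metric on $\eta_r^\star \subset \CC^q$, hence Griffiths-seminegative by Griffiths' curvature formula (cf.\ \eqref{re1}), so a fortiori $\sqrt{-1}\Theta \le 0 \le C\pi^\star(\omx)$; and by Lemma \ref{prel1} applied to $\cF^\star$ the various $h^\star_{\cF,i}$ are mutually comparable with uniform constants after shrinking the charts. These are precisely the hypotheses of \cite[Lemma~3.5]{Dem92}, which yields $\sqrt{-1}\,\ddc\log|\rho|^2_{h_{E^\star}} \ge -C\,\pi^\star(\omx)$ for every local holomorphic section $\rho$ of $E^\star$ away from an analytic subset, i.e.\ $\sqrt{-1}\Theta(E^\star,h_{E^\star}) \le C\,\pi^\star(\omx)\otimes\Id$; dualising flips the sign and gives \eqref{ross2} for $h_E := (h_{E^\star})^\star$, first outside an analytic set and then everywhere by smoothness. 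Finally, pulling back along a resolution of singularities $\wh X \to \wt X$ preserves the estimate, since both $E$ and the inequality are smooth, which is the form in which the lemma is stated.

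The main obstacle, for someone reconstructing this from scratch, is really the first step: one must know and use correctly that Rossi's space $\wt X$ — the closure of the graph of the Gauss-type map $f_i\colon\cA_i\dashrightarrow\mathcal G(q-r,q)$ — trivialises $E$ and carries the pulled-back quotient metrics as genuine smooth metrics, so that no delicate extension-across-a-divisor argument is needed. The secondary technical point is that \cite[Lemma~3.5]{Dem92} is typically phrased over a compact manifold, whereas here the local metrics have to be compared over the non-compact $X_0$; it is exactly the uniform inequality \eqref{comp1} of Lemma \ref{prel1}, valid after a slight shrinking of the charts, that supplies the constant $C$ independent of the point.
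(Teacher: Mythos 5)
Your proposal is correct and follows essentially the same route as the paper: both glue the dualised local quotient metrics $\pi_{i2}^\star(h_i^\star)$ over the Rossi graphs by the partition of unity (the paper's \eqref{ross5}), invoke the comparability of Lemma \ref{prel1} to apply \cite[Lemma~3.5]{Dem92} and obtain $\sqrt{-1}\,\ddc\log|\rho|^2_{h_{E^\star}}\ge -C\pi^\star(\omega_X)$, and then dualise to get \eqref{ross2}, extending across the exceptional analytic set by smoothness. Your additional remark on the Griffiths seminegativity of each subbundle metric $h_i^\star$ is exactly the implicit hypothesis the paper uses when citing Demailly's gluing lemma.
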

\noindent The remarkable fact in \eqref{ross6} is that we obtained a \emph{uniform lower bound} for the curvature on $E$ as function of the inverse image of $\omega_X$.
\medskip

\begin{remark}\label{comparison} 
Let $s\in H^0\big(V, \cF|_V \big)$ be a local section of  
$\cF$ defined on some open subset $V\subset X$. Then for any $V'\Subset V$ there exists a positive constant $C(s)> 0$ such that we have 
\begin{equation}\label{ross81}
\sup_{V'_0} |s|_{h_{\cF, 0}}\leq C(s).
\end{equation}
Indeed this is the case for the metric $h_{\cF, i}$ since the norm of any projection is smaller than one. The metric $h_{\cF, 0}$ is constructed
by gluing together the local objects $(h_{\cF, i})_{i\in I}$ so \eqref{ross81} follows. 
\end{remark} 

\begin{remark}\label{comparison, 2} A natural question at this point would be to compare the metrics $h$ constructed in Proposition \ref{reflex} and $h_E$. 
Given the equality \eqref{ross3}, the answer is 
immediate. Let $s\in H^0\big(U, \cF|_U)\big)$ be a local section of  
$\cF$ defined on some open subset $U\subset X$. It induces a section $\sigma$ of $E|_{\pi^{-1}(U)}$ by taking the inverse image $\pi^\star (s)$ and then we have
\begin{equation}\label{mt5}
C^{-1}|s|_h^2\circ \pi\leq |\sigma|_{h_{E}}^2\leq {C}|s|_{h}^2\circ \pi
\end{equation}
point-wise on $\pi^{-1}(U)$.
\end{remark}

\begin{remark}\label{warning}
In general the direct image of $E$ is not equal to $\cF$, cf. \cite{RuSe17}.
\end{remark}


\subsection{Mean-value inequality} On the desingularization $\wh X$ of $(X, \cF)$ we consider the following "standard" metric. Let $D:= \sum D_i$ be the exceptional divisor of the map $\pi$ and let $\eta_i$ be any smooth representative of 
$c_1\big({\mathcal O}(-D_i)\big)$. There exists $\ep_0:= (\ep^1,\dots \ep^N) $ with $\ep^i >0$ such that the form 
$\omega_{\wh X}$ given by the formula
\begin{equation}\label{mt6}
\omega_{\wh X}:= \pi^\star\omega_X+ \sum_{i=1}^N \ep^i\eta_i.
\end{equation}
 is a K\"ahler metric on $\wh X$.
\medskip

\noindent Consider the compact K\"ahler manifold $(\wh X, \omega_{\wh X})$. In the recent and very interesting article \cite{Phong} the authors B. Guo, D.H. Phong and J. Sturm establish a very general 
mean inequality which will be crucial for our next arguments. We extract next from their article the version we need here.
\smallskip

\noindent Let $\chi\geq 0 (= \pi^\star\omega_X)$ be any semipositive $(1,1)$-form on $\wh X$, which is positive definite in the complement of an analytic set. We consider a family of K\"ahler metrics 
\begin{equation}\label{new6}
\omega_t\in \{\chi+ t\omega_{\wh X}\}
\end{equation}
for $t\in ]0, 1]$ and let 
\begin{equation}
F_t:= \log\frac{\omega_t^n}{\omega_{\wh X}^n}
\end{equation}
be the so-called entropy function.
\smallskip

\noindent We assume that there exist $p> n$ and a positive constant $C> 0$ such that 
\begin{equation}\label{new7}
\int_{\wh X}\left|F_t\right|^p\omega_t^n\leq C
\end{equation}
for all $t\in ]0, 1]$. 
\medskip

\noindent The next important result was established in \cite{Phong} (see page 7 in \emph{loc.cit} \footnote{where it is -strangely- downgraded to "Lemma", maybe because the main techniques used to establish it are already present in the author's precedent work \cite{Phong1}}). 

\begin{thm}\cite{Phong} \label{phong}
Let $v\in L^1(\wh X, \omega_t^n)$ be a function which is $\cC^2$-differentiable on the set 
$(v\geq -1)$ and such that 
\begin{equation}
\Delta''_{t}(v)\geq -a
\end{equation} 
on $\wh X$, where we denote by $\Delta''_t$ the Laplace operator corresponding to 
$(\wh X, \omega_t)$. Then we have 
\begin{equation}\label{new9}
\sup_{\wh X}v\leq C_{\rm univ}(1+ \Vert v\Vert_{L^1(\wh X, \omega_t)})
\end{equation} 
where $C_{\rm univ}$ only depends on $p, n, \chi, \omega_{\wh X}$ and the bound $C$ for the entropy in \eqref{new7}.
\end{thm}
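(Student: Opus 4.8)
The statement we must prove (Theorem~\ref{phong}) is essentially a uniform $L^1 \to L^\infty$ estimate for a $v$ whose distributional Laplacian with respect to $\omega_t$ is bounded below by a fixed constant $-a$, with the constant $C_{\rm univ}$ independent of $t\in(0,1]$. The key point is that the $L^\infty$ bound is \emph{uniform} in $t$ despite the degeneration $\omega_t \to \chi$, and this is where the entropy hypothesis \eqref{new7} must enter. The strategy I would follow is the standard De~Giorgi / Guo--Phong--Sturm iteration adapted from their auxiliary Monge--Amp\`ere comparison: bound the $L^\infty$ norm of $(v)_+$ by first bounding all $L^k$ norms uniformly, then running a Moser-type iteration.

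\emph{Step 1: reduce to controlling a sublevel-set quantity.} Set $u := \max(v,-1)$, so that $u$ is $\cC^2$ where $v > -1$ and still satisfies $\Delta''_t u \ge -a$ in the sense of currents (the max with a constant preserves the lower Laplacian bound). Replacing $v$ by $u$ only increases $\sup v$ and changes $\Vert v\Vert_{L^1}$ by a bounded amount, so it suffices to prove \eqref{new9} for such $u$, which is bounded below. For $s \ge 0$ write $\phi(s) := \int_{\wh X} (u - \sup u + s)_+\,\omega_t^n$ type quantities, or more directly work with the ``$\alpha$-$\beta$'' lemma of De~Giorgi: if one can show
\begin{equation}\label{degiorgi}
s\,\vol_{\omega_t}(\{u > k+s\}) \le C\,\Vert (u-k)_+\Vert_{L^1(\omega_t)} \quad\text{and}\quad \Vert(u-k)_+\Vert_{L^1} \le C\,\vol_{\omega_t}(\{u>k\})^{1+\delta}
\end{equation}
uniformly, then $\sup u \le k_0 + C$ for an explicit $k_0$ controlled by $\Vert u\Vert_{L^1}$, which is exactly \eqref{new9}. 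The first inequality in \eqref{degiorgi} is trivial; the substance is the second.

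\emph{Step 2: the auxiliary Monge--Amp\`ere equation (the crux).} To obtain the second estimate in \eqref{degiorgi} one solves, following \cite{Phong1,Phong}, a complex Monge--Amp\`ere equation of the form $(\omega_t + \ddc\psi_k)^n = e^{B_k \cdot ((u-k)_+ + \text{entropy term})}\,\omega_t^n / \int(\cdots)$, uses the comparison/stability estimates for such equations, and integrates by parts against $\Delta''_t u \ge -a$. The mechanism is: the Laplacian lower bound lets one write $u$ (locally, or after subtracting the $\omega_t$-potential) as ``almost psh'', so the positivity $\omega_t + \ddc u_{\rm something} \ge 0$ feeds into a Monge--Amp\`ere comparison; then the uniform entropy bound \eqref{new7} is precisely what makes the implicit constants in the GPS a-priori estimates independent of $t$ (the constant $C$ in \eqref{new7} governs how much the density $e^{F_t}$ can concentrate). \textbf{This step is the main obstacle}: one has to quote the right form of the Guo--Phong--Sturm a-priori $L^\infty$ estimate for degenerate Monge--Amp\`ere equations — the version in which the only analytic input is the $L^p$, $p>n$, bound on $F_t$ — and verify that our hypotheses match its hypotheses with constants depending only on $p,n,\chi,\omega_{\wh X}$ and $C$. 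Concretely I would cite \cite[Thm./Lemma, p.~7]{Phong} essentially verbatim, after the reduction in Step~1 shows that our $v$ plays the role of the ``$\varphi$'' or source term in their inequality.

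\emph{Step 3: conclude.} Combining the two inequalities in \eqref{degiorgi} via the classical De~Giorgi iteration lemma (if $f(k)$ is nonincreasing and $f(k+s)\le (C/s)^{\alpha} f(k)^{1+\delta}$ then $f$ vanishes past $k_0 + C f(k_0)^{\delta/\alpha'}$) yields $\sup_{\wh X} u \le C_{\rm univ}(1 + \Vert u\Vert_{L^1(\omega_t)})$, and undoing the reduction $u = \max(v,-1)$ gives \eqref{new9} for $v$ itself, with $C_{\rm univ}$ depending only on the listed data. The total volume $\int_{\wh X}\omega_t^n = \int_{\wh X}\{\chi + t\omega_{\wh X}\}^n$ is uniformly bounded (cohomological), which is used implicitly throughout to normalize; this, together with the entropy bound, is what decouples the estimate from $t$.
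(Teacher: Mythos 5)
This statement is not proved in the paper: it is quoted verbatim from Guo--Phong--Sturm (the paper explicitly points to ``page 7 in \emph{loc.\ cit.}''), and the only original content the authors add is the Remark immediately following, which explains how to drop the normalisation $\int_{\wh X} v\, dV_t=0$ required in \cite{Phong} by replacing $v$ with $v-\frac{1}{\Vol_t(\wh X)}\int_{\wh X}v\,dV_t$ and using that $\Vol_t(\wh X)=\int_{\wh X}\{\chi+t\omega_{\wh X}\}^n$ is uniformly bounded above and below. Your proposal ends up in the same place: your Step~2, which you yourself flag as ``the crux'', consists of citing the Guo--Phong--Sturm a priori estimate ``essentially verbatim''. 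So as a comparison with the paper there is no real divergence --- both you and the authors ultimately defer the analytic content to \cite{Phong}.

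If, however, your text is meant as a self-contained proof, there is a genuine gap precisely at Step~2: the auxiliary Monge--Amp\`ere comparison (solving $(\omega_t+\ddc\psi_k)^n=e^{\cdots}\omega_t^n$, proving the stability estimate with constants depending only on $p,n,\chi,\omega_{\wh X}$ and the entropy bound, and integrating by parts against $\Delta''_t v\ge -a$ to derive the second inequality in your \eqref{degiorgi}) is only described, not executed, and it is exactly the part where the uniformity in $t$ must be extracted from \eqref{new7}. Two smaller points you handle correctly but should make explicit: the reduction $u=\max(v,-1)$ preserves $\Delta''_t u\ge -a$ distributionally (subtract a local solution of $\Delta''_t h=-a$ and use that the max of subharmonic functions is subharmonic), and matches the unusual hypothesis that $v$ is only $\cC^2$ on $(v\ge -1)$; and the normalisation issue, which you mention only in passing at the end, requires the lower volume bound $\Vol_t(\wh X)\ge \int_X\omega_X^n>0$ as well as the upper bound --- this is the one point the paper's own remark addresses explicitly.
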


\begin{remark} We note that the additional normalisation $\displaystyle \int_{\wh X}v dV_t=0$ is required in \emph{loc. cit.}, where $dV_t$ is the volume element of $(X, \omega_t)$.  By replacing function $v$ in Theorem \ref{phong} with 
$$\displaystyle v- \frac{1}{\Vol_t(\wh X)}\int_{\wh X}v dV_t,$$ 
where $\Vol_t(\wh X)$ is the volume of $\wh X$ with respect to $\omega_t$, we see that this result holds even without the normalisation, 
since $\Vol_t(\wh X)$ is uniformly bounded (thanks to \eqref{new6} above). 
\end{remark}
\medskip

\noindent We will use this result in the following context. Let $\wh X$ be as above, and consider the semipositive form
$$\chi:= \pi^\star\omega.$$ 

\noindent Define the family of metrics 
\begin{equation}\label{new12}
\omega_{t}:= \pi^\star \omega_{X}+ t\omega_{\wh X},\end{equation} 
where $t\in ]0, 1[$ is a real number. The main point is that the associated entropy function
\begin{equation}\label{new11}
F_{t}:= \log\frac{\omega_{t}^n}{\omega_{\wh X}^n}
\end{equation}
verifies an inequality as required in \eqref{new7}, as we next show.
\begin{lemme}\label{entro1} For any $p> 0$ 
there exists a constant $C>0$ uniform with respect to the parameter ${t}$ such that 
\begin{equation}\label{new13}
\int_{\wh X}\left|F_{t}\right|^p\omega_{t}^n\leq C
\end{equation}
holds true for all $t\in [0, 1/2]$.
\end{lemme}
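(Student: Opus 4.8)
The plan is to estimate $F_t = \log\frac{\omega_t^n}{\omega_{\wh X}^n}$ from above and below separately, the upper bound being easy and the lower bound being the crux. Recall $\omega_t = \pi^\star\omega_X + t\omega_{\wh X}$ while $\omega_{\wh X} = \pi^\star\omega_X + \sum_i \ep^i\eta_i$. Writing $n = \dim X$, expand
\[
\omega_t^n = (\pi^\star\omega_X + t\omega_{\wh X})^n = \sum_{k=0}^n \binom{n}{k} t^k\, (\pi^\star\omega_X)^{n-k}\wedge \omega_{\wh X}^k.
\]
Since $\pi^\star\omega_X$ is semipositive and $\omega_{\wh X}$ is a fixed Kähler metric, each term is a nonnegative multiple of $\omega_{\wh X}^n$ with coefficient bounded by a constant times $t^k \le t^0 = 1$ for $t \le 1/2$; hence $F_t \le C_0$ pointwise for a uniform $C_0$, which handles the region $\{F_t \ge 0\}$ immediately (there $|F_t|^p \le C_0^p$ and $\int_{\wh X}\omega_t^n = \Vol_t(\wh X)$ is uniformly bounded by \eqref{new6}).

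For the lower bound, the dominant small-$t$ term is $k = n$, giving $\omega_t^n \ge t^n\,\omega_{\wh X}^n$, so $F_t \ge n\log t$. This is the only place where the estimate degenerates as $t\to 0$, and it degenerates only logarithmically. Thus on $\{F_t < 0\}$ we have $|F_t| \le n|\log t| =: M_t$, and therefore
\[
\int_{\{F_t < 0\}} |F_t|^p\, \omega_t^n \le M_t^p \int_{\wh X}\omega_t^n \le C\, (n|\log t|)^p.
\]
This is not yet uniform in $t$, so one must do slightly better: split $\{F_t < 0\}$ according to how close $\pi^\star\omega_X$ is to being degenerate. The key observation is that the locus where $\pi^\star\omega_X$ fails to be positive definite is contained in the exceptional divisor $D = \sum D_i$, away from which $\pi^\star\omega_X \ge c\,\omega_{\wh X}$ locally, forcing $F_t \ge -C$ there uniformly. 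So only a tubular neighbourhood $D_\delta$ of $D$ contributes a large negative $F_t$, and on $D_\delta$ one uses that $\omega_t^n \le (\pi^\star\omega_X + \omega_{\wh X})^n \le C\omega_{\wh X}^n$ is a fixed measure of total mass going to $0$ as $\delta \to 0$. Combining: $\int_{\wh X}|F_t|^p\omega_t^n \le C\big(1 + |\log t|^p \cdot \Vol_{\omega_{\wh X}}(D_\delta)\big)$ for every $\delta$; but in fact one gets a cleaner bound by noting that near $D$ one has $\pi^\star\omega_X \ge c\,\dist(\cdot,D)^{2}\,\omega_{\wh X}$ (as $\pi^\star\omega_X$ is the pullback of a metric, it vanishes to finite order along $D$), so $F_t \ge \log\big(c\,\dist(\cdot,D)^{2n}\big)$ and $|F_t| \lesssim |\log \dist(\cdot,D)|$, which is in $L^p$ with respect to $\omega_{\wh X}^n$ for every $p$ — uniformly in $t$ since this bound no longer involves $t$ on $D_\delta$ once we also retain $F_t \ge n\log t$ only on the complement where it is harmless.

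The main obstacle is organising these two regimes so that the $t$-dependent lower bound $n\log t$ is only ever used on the region $\{\pi^\star\omega_X \ge c\,\omega_{\wh X}\}$ (where it is in fact superseded by a uniform constant), and the logarithmic-in-distance bound is used near $D$. A clean way to package this is: write $F_t = \log\frac{\omega_t^n}{(\pi^\star\omega_X + \omega_{\wh X})^n} + \log\frac{(\pi^\star\omega_X + \omega_{\wh X})^n}{\omega_{\wh X}^n}$; the second summand is a fixed function, bounded above and with only logarithmic poles along $D$ (since $(\pi^\star\omega_X+\omega_{\wh X})^n / \omega_{\wh X}^n$ is a fixed nonnegative function vanishing to finite order on $D$), hence in $L^p(\omega_{\wh X}^n) \subset L^p(\omega_t^n)$ uniformly; the first summand lies in $[n\log\frac{t}{2}, 0]$ and its $p$-th power integrates against $\omega_t^n$ to at most $(n|\log(t/2)|)^p\Vol_t(\wh X)$ — which is still not uniform, so here one instead observes that $\omega_t^n / (\pi^\star\omega_X+\omega_{\wh X})^n \ge (t/2)^n$ only on $\{\pi^\star\omega_X \le \omega_{\wh X}\}$-type sets but is in fact $\ge c$ uniformly once $\pi^\star\omega_X \ge c\omega_{\wh X}$, reducing the bad set to $D_\delta$ whose $\omega_t^n$-mass is $O(\Vol_{\omega_{\wh X}}(D_\delta))$, and then letting $\delta$ depend suitably on $t$, or better, absorbing the whole first summand into the second by choosing the comparison measure $(\pi^\star\omega_X)^n + \omega_{\wh X}^n$ instead. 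In practice the slick route is: $\omega_t^n \ge (\pi^\star\omega_X)^n$ and $\omega_t^n \ge t^n\omega_{\wh X}^n$, so $\omega_t^n \ge \tfrac12\big((\pi^\star\omega_X)^n + t^n\omega_{\wh X}^n\big)$ up to combinatorial constants, whence $F_t \ge -C + \log\big(g + t^n\big)$ where $g := (\pi^\star\omega_X)^n/\omega_{\wh X}^n \ge 0$ is a fixed bounded function vanishing to finite order along $D$; then $|F_t| \le C + |\log(g + t^n)| \le C + |\log g|\cdot\mathbb 1_{g \ge t^n} + n|\log t|\cdot\mathbb 1_{g < t^n}$, and since $g$ vanishes to finite order, $\{g < t^n\}$ is contained in a tube of radius $\lesssim t^{n/m}$ around $D$ of $\omega_{\wh X}^n$-volume $\lesssim t^{2n/m}$, so $|\log t|^p \cdot t^{2n/m} \to 0$, while $|\log g| \in L^p(\omega_{\wh X}^n)$ for all $p$ by the finite-order vanishing (a standard fact, e.g. via resolution or direct local computation in coordinates where $g \sim |z_1|^{2a_1}\cdots$). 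Integrating $|F_t|^p$ against $\omega_t^n \le C\omega_{\wh X}^n$ then gives the uniform bound \eqref{new13}, completing the proof.
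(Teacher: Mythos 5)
Your final ``slick route'' is essentially the paper's own proof: discard the intermediate terms in the expansion of $\omega_t^n$, keep the $t$-independent term $(\pi^\star\omega_X)^n$, and use that $-\log$ of the resulting fixed, not-identically-zero density (locally a sum of squares of moduli of holomorphic functions) lies in $L^p(\omega_{\wh X}^n)$ for every $p$, uniformly in $t$. The detour through the tube $\{g<t^n\}$ is unnecessary, since $-\log(g+t^n)\leq -\log g$ pointwise, so the $t$-dependent branch is already subsumed by the $|\log g|$ bound.
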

\begin{proof}
The verification of \eqref{new13} is very simple, as follows. In the first place we can assume 
that $\omega_t\leq \omega_{\wh X}$ for each $t\in [0, 1/2]$, so that we have 
\begin{equation}\label{new30}
|F_{t}|= \log\frac{\omega_{\wh X}^n}{\omega_{t}^n}.
\end{equation}
\noindent Next, when restricted to 
each of the sets $\mathcal A_\alpha\subset U_\alpha$ the metric $\omega_X$ is quasi-isometric 
to the Euclidean metric on $\CC^{N_\alpha}$ (notations as at the beginning of this section). It follows that 
locally on each trivialising open subset $\Omega$ of $\wh X$ there exist holomorphic functions 
$$\displaystyle f_\alpha:= (f_{\alpha, 1},\dots, f_{\alpha, M})$$
for ${\alpha=0,\dots, n}$ such that 
\begin{equation}\label{new31}
|F_{t}|_{\Omega}\leq -\log{\sum_{\alpha=0}^n\sum_it^\alpha |f_{\alpha, i}|^2}
\end{equation}
and such that $f_0$ is not identically zero. Since the RHS of the inequality belongs to $L^p$ for any positive $p> 0$, the lemma follows.
\end{proof}

\medskip

\noindent Summing up, we have the next statement.
\begin{cor}\label{mv}
$v\in L^1(\wh X, \omega_{t}^n)$ be a function which is $\cC^2$-differentiable on the set 
$(v\geq -1)$ and such that 
\begin{equation}\label{new14}
\Delta''_{t}(v)\geq -a
\end{equation} 
on $\wh X$, where we denote by $\Delta''_{t}$ the Laplace operator corresponding to 
$(\wh X, \omega_t)$. Then we have 
\begin{equation}\label{new15}
\sup_{\wh X}v\leq C_{\rm univ}\big(1+ \Vert v\Vert_{L^1_{t}}\big)
\end{equation} 
where $C_{\rm univ}$ only depends on the quantities in Theorem \ref{phong} -in particular, is independent of the parameter $t$. The $L^1$ norm in \eqref{new15} is induced by the data $(\wh X, \omega_{t})$. 
\end{cor}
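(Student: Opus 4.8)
The plan is simply to combine Theorem \ref{phong} with Lemma \ref{entro1}. The setup in Corollary \ref{mv} is the special case of the abstract mean-value inequality where one chooses $\chi := \pi^\star \omega_X$ and the family of metrics $\omega_t := \pi^\star\omega_X + t\,\omega_{\wh X}$ as in \eqref{new12}. The only hypothesis of Theorem \ref{phong} that is not immediate in this concrete situation is the uniform entropy bound \eqref{new7}, i.e.\ the existence of some $p > n$ and a constant $C > 0$ with $\int_{\wh X}|F_t|^p\,\omega_t^n \le C$ for all $t$ in the relevant range. But this is exactly the content of Lemma \ref{entro1}, which even gives such a bound for \emph{every} $p > 0$; in particular we may fix any $p > n$ and feed the resulting constant into Theorem \ref{phong}.

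First I would recall that $\chi = \pi^\star\omega_X$ is semipositive and strictly positive off the exceptional locus of $\pi$ (an analytic set), so $\chi$ satisfies the standing assumption on the form appearing in Theorem \ref{phong}, and the cohomology classes $\{\chi + t\,\omega_{\wh X}\}$ are Kähler for $t \in \,]0,1]$, matching \eqref{new6}. Next I would invoke Lemma \ref{entro1} with a chosen exponent $p > n$: it produces a constant $C_0 > 0$, independent of $t \in [0,1/2]$, with $\int_{\wh X}|F_t|^p\,\omega_t^n \le C_0$. Then for a function $v \in L^1(\wh X, \omega_t^n)$ which is $\cC^2$ on $(v \ge -1)$ and satisfies $\Delta''_t v \ge -a$, Theorem \ref{phong} applies verbatim and yields $\sup_{\wh X} v \le C_{\rm univ}(1 + \Vert v\Vert_{L^1(\wh X,\omega_t)})$, where, by the last sentence of Theorem \ref{phong}, $C_{\rm univ}$ depends only on $p$, $n$, $\chi$, $\omega_{\wh X}$ and the entropy bound $C_0$. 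Since $p$, $\chi$, $\omega_{\wh X}$ are fixed once and for all and $C_0$ is $t$-independent by Lemma \ref{entro1}, the constant $C_{\rm univ}$ is independent of $t$, which is the assertion.

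There is really no substantive obstacle here: the corollary is a specialization of Theorem \ref{phong} whose single nontrivial hypothesis has already been verified in Lemma \ref{entro1}. The only point deserving a word of care is the normalization: Theorem \ref{phong} as stated (see the Remark following it) is insensitive to adding a constant to $v$, since the volumes $\Vol_t(\wh X) = \int_{\wh X}\omega_t^n$ are uniformly bounded above and below for $t \in [0,1/2]$ thanks to \eqref{new6}–\eqref{new12}; hence no normalization of $v$ is needed and the $L^1$-norm in \eqref{new15} may be taken with respect to $(\wh X,\omega_t)$ directly, as claimed.
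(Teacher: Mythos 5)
Your proposal is correct and matches the paper's own (implicit) argument: the corollary is stated as an immediate consequence of Theorem \ref{phong} once the uniform entropy bound of Lemma \ref{entro1} is in hand, with the normalization issue handled exactly as in the remark following Theorem \ref{phong} via the uniform bound on $\Vol_t(\wh X)$.
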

\medskip

\section{Approximate stability and uniformity properties}
\medskip

\noindent We now assume that the sheaf $\cF$ is stable with respect to the data $(X, \{ \omega_{X} \})$.
Thanks to the results in~\cite{Tom19}, the corresponding bundle $E$ is stable with respect to $\omega_{t}$, provided that the parameter $t$ is small enough.
Actually, the result in~\cite{Tom19} is very general; we discuss next a particular case in which it can be easily established directly.
\medskip

\noindent \emph{Hypothesis}:\label{hypo} Each class $\displaystyle \{ \alpha \} \in H^{1,1}(\wh X, \R) \cap H^2(\wh X, \Z)$ is the inverse image of the Chern class of a line bundle on $X$, modulo a cohomology class with support in $D$.
\smallskip

\noindent \emph{Remark:} If $X$ is algebraic (but not necessarily compact) and has rational singularities, then the \emph{Hypothesis} is equivalent to $X$ being $\Q$-factorial, cf.~\cite[Prop.~12.1.6]{KollarMori92}.
This equivalence should continue to hold without the algebraicity assumption, as long as $X$ is compact (which it is in our situation).
\smallskip


%

\noindent Assuming the \emph{Hypothesis} from now on, we argue as follows.
In the first place it is clear that $E$ is stable with respect to $\pi^\star\omega_X$- this follows from the stability of $\cF$. We show next that there exists a positive $\delta_0>0$ such that 
\begin{equation}\label{new16}
\mu_{\pi^\star\omega_X}(\cG)+ \delta_0< \mu_{\pi^\star\omega_X}(E)
\end{equation}
holds for every proper sub-sheaf $\cG\subset E$. As soon as the existence of $\delta_0>0$ is granted, the conclusion follows immediately.
\smallskip

\noindent If such a constant $\delta_0> 0$ is impossible to find, then there exists a 
sequence 
\begin{equation}\label{new32}
0\to\cG_k\to E\end{equation} 
such that $\rk(\cG_k)= s< r:= \rk(E)$ and such that
 \begin{equation}\label{new17}
\lim_{k\to\infty}\int_{\wh X}c_1(L_k)\wedge \pi^\star\omega_X^{n-1}= \frac{s}{r}
\int_{\wh X}c_1(E)\wedge \pi^\star\omega_X^{n-1}
\end{equation}
where $L_k$ is the (double dual of the) determinant of $\cG_k$. 
\smallskip

\noindent Let 
$\sigma_k$ be the section of $\Lambda^s E\otimes L_k^{-1}$ induced by the map \eqref{new32}.
We have 
\begin{equation}\label{new18}
\sqrt{-1}\ddbar \log|\sigma_k|^2 _{h_E, h_k}\geq \theta_{L_k}- C\omega_{\wh X}
\end{equation}
where $\theta_{L_k}$ is the curvature of $L_k$ with respect a fixed smooth metric $h_k$ and $C$ is a positive constant bounding the curvature of $\displaystyle (E, h_{E,0})$. The current 
\begin{equation}\label{new19}
\Theta_k:= \pi_\star\big(C\omega_{\wh X}- \theta_{L_k}+ \sqrt{-1}\ddbar \log|\sigma_k|^2\big)
\end{equation}
is positive, and its mass is bounded by a constant which is independent of $k$: indeed, as consequence of
\eqref{new17} we see that 
\begin{equation}\label{new20}
\int_X\Theta_k\wedge \omega_X\leq C
\end{equation}
for some constant $C>0$. It follows that 
the sequence of currents $(\Theta_k)_{k\geq1}$ is convergent (up to the choice of some subsequence). 
\smallskip

\noindent Now, by our hypothesis we have 
\begin{equation}\label{coho3}
c_1(L_k)\simeq \pi^\star(F_k)
\end{equation}
modulo a $(1,1)$-class whose support is included in $D$. 
Then we infer that 
the sequence of cohomology classes $\{\theta_{F_k}\}:= \pi_\star \{\theta_{L_k}\}$ is convergent --therefore constant, as soon as $k\gg 0$. Therefore the sequence of real numbers on the LHS of \eqref{new17} is constant, and this contradicts the stability of $E$.
\smallskip

\noindent In the absence of the \emph{Hypothesis} above, we simply use the result in \cite{Tom19}.

\medskip 

\noindent By the theorem of Uhlenbeck-Yau \cite{UY86} applied for each $t$ we can construct a sequence $(h_{t, \infty})_{t}$ of 
metrics on $E$, such that $h_{t, \infty}$ satisfies the Hermite-Einstein equation with respect to 
the metric $\omega_{t}$ in \eqref{new12}. Next we choose a specific normalisation of $h_{t, \infty}$ and our plan is to show that by letting $t\to 0$ the limit metric will satisfy the requirements in Theorem \ref{HE}. 
\smallskip

\noindent The normalisation mentioned before is as follows. Let $(L, h_L):= (\det E, \det h_E)$ et let $\theta_L$ be the induced curvature form on $L$. We recall that the metric $h_E$ satisfies the inequality \eqref{ross2}.
The equation
\begin{equation}\label{eq4}
\Lambda_{t} \theta_L+ \Delta_{t}''\rho_{t}= 
\frac{1}{\Vol(X, \omega_{t})}\int_{\wh X}c_1(E)\wedge \{\omega_{t}\}^{n-1}
\end{equation} 
has a unique solution $\rho_{t}$ such that $\displaystyle \int_{\wh X}\rho_{t}dV_t= 0.$ This specific normalisation, rather than 
$\displaystyle \max_{\wh X}\rho_{t}= 0$ will be important in the proof of our main results.
\medskip

\noindent As an application of the Corollary \ref{mv} combined with \eqref{eq4}
we get the following important information.
\begin{lemme}\label{mv1}
There exists a positive constant $C>0$ such that the inequalities
\begin{equation}\label{new21}
- \rho_{t}\leq C\big(1+ \int_{\wh X}|\rho_{t}|dV_{t}\big), \qquad \sup_{\wh X}\left(C\log |s_D|^{2}+ \rho_{t}\right)\leq \int_{\wh X}|\rho_{t}|dV_{t}, 
\end{equation}
hold true for every $t\in ]0, 1]$. 
\end{lemme}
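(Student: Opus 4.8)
The plan is to apply Corollary \ref{mv} twice, to two different functions built out of $\rho_t$, using the equation \eqref{eq4} together with the curvature bounds \eqref{ross2} and the existence of a potential with log-poles along $D$. For the first inequality, set $v := -\rho_t$ and compute $\Delta''_t v = -\Delta''_t \rho_t = \Lambda_t \theta_L - c$, where $c$ is the (uniformly bounded, by \eqref{new6}) cohomological constant on the right-hand side of \eqref{eq4}. Since $h_E$ satisfies \eqref{ross2}, we have $\sqrt{-1}\,\theta_L \geq -C\pi^\star\omega_X \geq -C\omega_t$ once $t \leq 1$ (here $\theta_L$ is the curvature of $(\det E,\det h_E)$ and the lower bound on $\Theta(E,h_E)$ passes to the determinant), hence $\Lambda_t \theta_L \geq -Cn$ pointwise, so $\Delta''_t v \geq -a$ for a uniform constant $a$. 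The function $v = -\rho_t$ is smooth on all of $\wh X$, so the $\cC^2$-hypothesis of Corollary \ref{mv} on the set $(v \geq -1)$ is trivially satisfied, and \eqref{new15} gives exactly $-\rho_t \leq C_{\rm univ}(1 + \|\rho_t\|_{L^1_t})$, which is the first claimed bound.

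For the second inequality, let $s_D$ be the canonical section of $\O(D)$ and fix a smooth metric on $\O(D)$ so that $\sqrt{-1}\,\ddc(-\log|s_D|^2) = -[D] + \Theta_{\O(D)}$ as currents, and in particular $\sqrt{-1}\,\ddc \log|s_D|^2 \geq -C'\omega_{\wh X}$ away from $D$ for some constant $C'$. Set $v := C\log|s_D|^2 + \rho_t$ for a constant $C$ to be chosen. On the complement of $D$ this is smooth, and as $\log|s_D|^2 \to -\infty$ along $D$, the set $(v \geq -1)$ is a relatively compact subset of $\wh X \setminus D$ on which $v$ is $\cC^2$; moreover $v \in L^1(\wh X,\omega_t^n)$ since $\log|s_D|^2$ is locally integrable and $\rho_t \in L^1$. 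Now
\[
\Delta''_t v = C\,\Delta''_t \log|s_D|^2 + \Delta''_t \rho_t = C\,\Delta''_t\log|s_D|^2 + c - \Lambda_t\theta_L,
\]
and using $\sqrt{-1}\,\ddc\log|s_D|^2 \geq -C'\omega_{\wh X} \geq -C'\omega_t/t$ — here one must be slightly careful with the $t$-dependence — together with $\Lambda_t\theta_L \leq \Lambda_t(C\pi^\star\omega_X) \le Cn$ (the upper bound also coming from a curvature bound on $(E,h_{E,0})$, cf.\ the discussion around \eqref{new18}), one obtains $\Delta''_t v \geq -a$ for a uniform $a$. Corollary \ref{mv} then yields $\sup_{\wh X} v \leq C_{\rm univ}(1 + \|v\|_{L^1_t})$, and since $\log|s_D|^2 \leq 0$ after scaling we have $\|v\|_{L^1_t} \leq \|\rho_t\|_{L^1_t} + C\|\log|s_D|^2\|_{L^1_t} \leq \|\rho_t\|_{L^1_t} + C''$, giving the second inequality after absorbing constants (and, if desired, rescaling $s_D$ to make the additive constant disappear into the form stated).

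The main obstacle I anticipate is the handling of the factor $t$ when bounding $\Delta''_t \log|s_D|^2$ from below: the naive estimate $\sqrt{-1}\,\ddc\log|s_D|^2 \geq -C'\omega_{\wh X}$ only gives $\Lambda_t$ of it $\geq -C'/t$, which blows up as $t\to 0$. The resolution should be that one does not need $\log|s_D|^2$ itself but rather $C\log|s_D|^2$ with $C$ fixed, and that the negative part of $\ddc\log|s_D|^2$ is actually controlled by $\pi^\star\omega_X + $ (a form supported near $D$), or alternatively that one works on the region $(v\geq -1)$ where $|s_D|$ is bounded below, so that the Poincaré–Lelong contribution is irrelevant and the smooth curvature term $\Theta_{\O(D)}$ is bounded by $C'\pi^\star\omega_X \le C'\omega_t$ after possibly enlarging the support region — in any case the bound must be made uniform in $t$, and this is the point requiring genuine care rather than routine computation. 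The normalisation $\int_{\wh X}\rho_t\,dV_t = 0$ is used only to guarantee that $\|\rho_t\|_{L^1_t}$ is the natural quantity appearing, and that Corollary \ref{mv} (which is stated for mean-zero functions, but extended in the Remark to arbitrary $L^1$ functions) applies cleanly.
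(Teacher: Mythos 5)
Your treatment of the first inequality is correct and identical to the paper's: from \eqref{ross2} one gets $\theta_L\geq -C\pi^\star\omega_X\geq -C\omega_t$, hence $\Lambda_t\theta_L\geq -C$, hence $\Delta''_t(-\rho_t)\geq -C$ by \eqref{eq4}, and Corollary \ref{mv} concludes.

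The second inequality, however, has a genuine gap, and you have correctly located it but not closed it. Two steps in your argument fail. First, you invoke $\Lambda_t\theta_L\leq \Lambda_t(C\pi^\star\omega_X)\leq Cn$; but the metric $h_E$ from Lemma \ref{lowerbound} only satisfies a \emph{lower} curvature bound in terms of $\pi^\star\omega_X$, while the upper bound is merely $\theta_L\leq C\omega_{\wh X}$ (the paper explicitly warns that ``$\theta_L$ is not necessarily bounded from above by the inverse image of $\omega_X$''), and $\Lambda_t\omega_{\wh X}$ blows up near $D$ as $t\to 0$. Second, you need a uniform lower bound on $\Delta''_t\log|s_D|^2$, which, as you note yourself, the naive estimate $\sqrt{-1}\ddc\log|s_D|^2\geq -C'\omega_{\wh X}\geq -C'\omega_t/t$ does not provide; restricting attention to the region $(v\geq -1)$ does not help either, because the constant in the bound $\Theta_{\O(D)}\leq C'\omega_{\wh X}$ still cannot be converted into a $t$-uniform bound against $\omega_t$ there. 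The paper's resolution is that neither pointwise bound is needed: on $\wh X\setminus D$ one has exactly $\omega_{\wh X}=\pi^\star\omega_X+\sqrt{-1}\ddbar\log|s_D|^{2\ep_0}$ (by the construction \eqref{mt6} of $\omega_{\wh X}$), so
\begin{equation*}
\Lambda_t\theta_L\leq C\big(1+\Delta''_t\log|s_D|^{2\ep_0}\big),
\end{equation*}
and substituting into \eqref{eq4} gives $\Delta''_t\big(\rho_t+C\log|s_D|^{2\ep_0}\big)\geq -C'$: the problematic Laplacian term is cancelled \emph{exactly} by choosing the coefficient of $\log|s_D|^2$ in the test function to match the one arising from the decomposition of $\omega_{\wh X}$. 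This cancellation, rather than any pointwise control of $\Delta''_t\log|s_D|^2$ or an upper bound $\theta_L\leq C\pi^\star\omega_X$, is the missing idea; once it is in place, Corollary \ref{mv} applies to $v=C\log|s_D|^{2\ep_0}+\rho_t$ exactly as you describe (the $\cC^2$ hypothesis on $(v\geq -1)$ and the $L^1$ membership are handled correctly in your write-up).
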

\begin{proof}
We begin with the following inequalities
\begin{equation}
\theta_L\geq -C\pi^\star \omega_{X}\geq  -C\omega_t
\end{equation}
which are a direct consequence of \eqref{ross2}.
\smallskip

\noindent By taking the trace with respect to the metric $\omega_{t}$ it follows that we have 
\begin{equation}\label{new23}
\Lambda_{t} \theta_L\geq -C.
\end{equation}
The inequality \eqref{new23} combined with \eqref{eq4} implies that we have
\begin{equation}\label{new24}
\Delta_{t}''\left(- \rho_{t}\right)\geq -C
\end{equation}
and the first inequality of \eqref{new21} follows from Corollary \ref{mv}.
\medskip

\noindent The second part of our statement is established along the same type of arguments, modulo the fact that the form $\theta_L$ is not necessarily bounded from above by the inverse image of $\omega_X$. We proceed as follows:
\begin{equation}\label{rev1}
\theta_L\leq C\omega_{\wh X}= C(\pi^\star\omega_X+ \sqrt{-1}\ddbar \log|s_D|^{2})
\end{equation}
and since $\pi^\star\omega_X\leq \omega_t$ it follows that 
\begin{equation}\label{rev2}
\theta_L\leq C(\omega_t+ \sqrt{-1}\ddbar \log|s_D|^{2})
\end{equation}
and therefore we obtain the inequality 
\begin{equation}\label{rev3}
\Lambda_{t} \theta_L\leq C(1+ \Delta_{t}''\log|s_D|^{2\ep_0}).
\end{equation}
The relation \eqref{rev3} combined with the equality \eqref{eq4} and Corollary \ref{mv} allow us to conclude.
\end{proof}

\medskip

\noindent We can express the HE metric $h_{t, \infty}$ as follows
\begin{equation}\label{mt9} 
h_{t, \infty}:= h_{E}e^{- \frac{1}{r}\rho_t}\exp(s_t)
\end{equation}
where $s_t$ is an endomorphism of $E$ such that $\Tr(s_t)= 0$ at each point of $\wh X$. We wish to obtain $L^\infty$ estimates for the norm of the endomorphism 
\begin{equation} 
H_t:= e^{- \frac{1}{r}\rho_t}\exp(s_t)=: e^{- \frac{1}{r}\rho_t}S_t
\end{equation} 
which defines the HE metric $h_{t, \infty}$. Note that have the equality
\begin{equation}\label{rev6}
\log\big(\Tr (H_t)\big)= \log\big(\Tr (S_t)\big)- \rho_t.
\end{equation} 
\smallskip

\noindent Next we obtain an inequality similar to the one in Lemma \ref{mv1} for the function $\log \Tr(H_t)$. The method will be the same: derive a lower bound for 
$\Delta_{t}''\log \Tr(H_t)$ and use the corollary in the previous section. 
\medskip

\noindent Recall the following identities
\begin{equation}\label{rev40}
\Delta''_t\log\big(\Tr (H_t)\big)= \frac{\Delta''_t\big(\Tr (H_t)\big)}{\Tr (H_t)}- \frac{|\partial \Tr (H_t)|^2}{\big(\Tr (H_t)\big)^2}
\end{equation}
and 
\begin{equation}\label{rev41}
\Delta''\big(\Tr (H_t)\big)= \Tr \big(\Lambda_t\Theta(E, h_{E})\circ H_t\big)+ \Tr \Lambda_t\big((D'_tH_t)\circ H_t^{-1}\circ (\dbar H_t)\big)+ C
\end{equation}
 cf. \cite{Siu87}, pages 12-14, where the constant $C$ in \eqref{rev41} is given by the HE condition. As shown in the \emph{loc. cit}, we have 
 \begin{equation}\label{rev42}
\Tr \Lambda_t\big((D'_tH_t)\circ H_t^{-1}\circ (\dbar H_t)\big)\geq \frac{|\partial \Tr (H_t)|^2}{\Tr (H_t)}
\end{equation}
at each point of $X$ from which we obtain the relation
\begin{equation}\label{new25}
\Delta_{t}''\left(\log \Tr(H_t)\right)\geq \frac{\Tr\big(\Lambda_t \Theta(E, h_{E})\circ H_t\big)}{\Tr(H_t)}+ C.
\end{equation}
On the other hand, by using \eqref{ross2} again we infer that we have
\begin{equation}\label{new33}
\frac{\Tr\big(\Lambda_t \Theta(E, h_{E})\circ H_t\big)}{\Tr(H_t)} \geq -C
\end{equation}
pointwise on $\wh X$.

\noindent When combined with \eqref{new25}, it gives
\begin{equation}\label{new26}
\Delta_{t}''\left(\log \Tr(H_t)\right)\geq -C.
\end{equation}
By using Corollary \ref{mv} again, we have
\begin{equation}\label{rev7}
\log \Tr(H_t)\leq C\big(1+ \big\Vert\log\big(\Tr(H_t)\big)\big\Vert_t\big),
\end{equation}
a relation which will play an important role in the next section, so we record it here for further reference.
\begin{lemme}\label{1mv1}
There exists a positive constant $C>0$ such that the inequalities
\begin{equation}\label{new221}
\log \Tr(H_t)\leq C\big(1+ \big\Vert\log\big(\Tr(H_t)\big)\big\Vert_t\big),
\end{equation}
hold true for every $t\in ]0, 1]$. 
\end{lemme}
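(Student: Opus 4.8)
The plan is to obtain a lower bound for $\Delta''_t\log\Tr(H_t)$ that is uniform in $t\in\,]0,1]$ and then apply the mean-value inequality of Corollary \ref{mv}, exactly as in the treatment of $\rho_t$ in Lemma \ref{mv1}. One starts from the elementary identity \eqref{rev40}, which expresses $\Delta''_t\log\Tr(H_t)$ as $\Delta''_t\Tr(H_t)/\Tr(H_t)$ minus the gradient term $|\partial\Tr(H_t)|^2/\Tr(H_t)^2$. Since $h_{t,\infty}=h_Ee^{-\rho_t/r}S_t$ satisfies the Hermite--Einstein equation with respect to $\omega_t$, the Bochner-type formula of Siu (\cite{Siu87}, pp.~12--14) recorded in \eqref{rev41} writes $\Delta''_t\Tr(H_t)$ as the sum of the curvature term $\Tr\big(\Lambda_t\Theta(E,h_E)\circ H_t\big)$, the non-negative-type term $\Tr\Lambda_t\big((D'_tH_t)\circ H_t^{-1}\circ(\dbar H_t)\big)$, and a term dictated by the Hermite--Einstein constant. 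The Cauchy--Schwarz inequality \eqref{rev42} shows that, after division by $\Tr(H_t)$, this middle term absorbs the gradient term of \eqref{rev40}, so the two gradient contributions cancel favourably and one is left with the clean bound \eqref{new25}, namely $\Delta''_t\log\Tr(H_t)\ge\Tr\big(\Lambda_t\Theta(E,h_E)\circ H_t\big)/\Tr(H_t)+C$.

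Next, feed in the curvature estimate \eqref{ross2} for the reference metric $h_E$: since $\sqrt{-1}\Theta(E,h_E)\ge-C\pi^\star\omega_X\otimes\Id_E$ and $\pi^\star\omega_X\le\omega_t$, tracing against the positive-definite endomorphism $H_t$ gives $\Tr\big(\Lambda_t\Theta(E,h_E)\circ H_t\big)\ge-C\,\Tr(H_t)$, that is \eqref{new33}. Substituting into \eqref{new25} yields the uniform lower bound \eqref{new26}, $\Delta''_t\log\Tr(H_t)\ge-C$ on $\wh X$. Since $\omega_t$ is a genuine K\"ahler metric on the compact manifold $\wh X$, the Uhlenbeck--Yau metric $h_{t,\infty}$ is smooth everywhere on $\wh X$; combined with the smoothness of $h_E$ and of $\rho_t$ this makes $\log\Tr(H_t)$ a smooth (hence $\cC^2$ and $L^1(\wh X,\omega_t^n)$) function for each fixed $t$, so the hypotheses of Corollary \ref{mv} are met. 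Its conclusion \eqref{new9} applied to $v:=\log\Tr(H_t)$ is precisely the asserted inequality \eqref{new221}.

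The step that genuinely requires attention is the $t$-uniformity of the constant $C$ in \eqref{new26}, rather than any single estimate in isolation. This is why it is essential that \eqref{ross2} be phrased relative to the fixed form $\pi^\star\omega_X$ (which satisfies $\pi^\star\omega_X\le\omega_t$ throughout the range of $t$), and that the Hermite--Einstein constant entering \eqref{rev41}, i.e.\ the right-hand side of \eqref{eq4}, remain bounded as $t\to0$ --- which holds because both $\Vol(\wh X,\omega_t)$ and $\int_{\wh X}c_1(E)\wedge\{\omega_t\}^{n-1}$ converge. The $t$-independence of $C_{\rm univ}$ in Corollary \ref{mv} was already secured via Lemma \ref{entro1}, so once these uniformities are in place the proof is complete.
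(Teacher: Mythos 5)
Your proposal is correct and follows exactly the paper's own argument: the chain \eqref{rev40}--\eqref{rev42} yielding \eqref{new25}, then \eqref{ross2} together with $\pi^\star\omega_X\le\omega_t$ giving \eqref{new33} and hence the uniform bound \eqref{new26}, and finally Corollary \ref{mv}. Your added remarks on the $t$-uniformity of the Hermite--Einstein constant and on the smoothness of $h_{t,\infty}$ are consistent with (and slightly more explicit than) the text.
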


\medskip

\begin{remark} The main point in Lemma \ref{1mv1} is that the constant $C$ is \emph{independent of $t$}. By anticipating a little, as soon as we get a uniform $L^1(\wh X, \omega_t)$ 
bound for the function $\log\big(\Tr H_t\big)$, we are done (i.e. precisely as in the "usual case"). Actually the two 
statements Lemma \ref{mv1} and Lemma \ref{1mv1} represent the analogue of Simpson's properness theorem in \cite{Sim88}
for the sequence of metrics $(h_{t, \infty})_{t> 0}$.
\end{remark}

\section{Proof of Theorem \ref{HE}}\label{smoothset}

\noindent Given any endomorphism $s$ of a vector bundle $E$ together with functions $\psi: \mathbb R\to \R$ and 
$\Psi: \R\times \R\to \R$ we first recall here the definition of the canonical 
constructions $\psi(s)$ and $\Psi(s)$. Afterwards Theorem \ref{HE} will be 
established via a suitable modification of the usual 
techniques in Hermite-Einstein theory combined with the results in the previous section. 
\medskip

\noindent The main technical result obtained in this section is the following $\mathcal C^0$ estimate.

\begin{thm}\label{Est, I} There exists a constant $C> 0$ such that the inequalities 
\begin{equation}\label{est1}
\Tr (H_t)\leq C, \qquad -C\leq \rho_t\leq -C\log|s_D|^2\qquad \int_{\wh X}|D's_{t}|^2\frac{dV_{t}}{\log\frac{1}{|s_D|^2}}\leq C
\end{equation}
hold, where $s_D$ is the product of sections defining the exceptional divisor of the map $\pi: \wh X\to X$.
\end{thm}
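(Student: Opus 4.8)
The plan is to bootstrap the two ``properness-type'' estimates of Lemma \ref{mv1} and Lemma \ref{1mv1} into genuine uniform $\cC^0$ bounds, following the strategy of Simpson \cite{Sim88}. The key observation is that the three quantities we need to control — $\Tr(H_t)$, the oscillation of $\rho_t$, and the Dirichlet energy of $s_t$ — are linked through the identity \eqref{rev6} and through the integrated form of the differential inequalities used in the previous section. First I would argue by contradiction: suppose there is a sequence $t_k\to 0$ along which $m_k:=\Vert \log\Tr(H_{t_k})\Vert_{L^1(\wh X,\omega_{t_k})}\to\infty$ (if this $L^1$ norm stays bounded, then \eqref{new221} gives immediately $\Tr(H_t)\le C$, and then \eqref{rev6} together with the first inequality of \eqref{new21} and the normalisation $\int\rho_t\,dV_t=0$ closes the argument for $\rho_t$, so we would be done). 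Set $u_k:=\log S_{t_k}/m_k$, or rather the normalised endomorphism-valued object $\frac1{m_k}(\text{something like }\log(S_{t_k}))$, whose $L^1$ norm is $\sim 1$; one extracts a weak limit $u_\infty$.

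The heart of the matter is then to run Simpson's construction of a destabilising subsheaf. Using the Hermite--Einstein equation for $h_{t_k,\infty}$ with respect to $\omega_{t_k}$, one obtains that $u_\infty$ is a nonzero, $L^2_1$, self-adjoint endomorphism (with respect to $h_E$) whose eigenvalues are constant a.e., and such that the ``off-diagonal'' part $(f(\lambda_i,\lambda_j)\,(u_\infty)_{ij})$ of $D''_{h_E}u_\infty$ vanishes for a suitable cut-off function $f$. By the Uhlenbeck--Yau regularity argument \cite{UY86} (in the form used in \cite{Sim88}), the eigenprojections of $u_\infty$ define a coherent subsheaf $\cG\subset E$, and the curvature estimate \eqref{ross2} (the uniform lower bound for $\Theta(E,h_E)$, valid on all of $\wh X$ including the exceptional locus) is exactly what lets the relevant integrals converge despite the non-compactness/degeneration — it plays the role that boundedness of $\Theta$ plays in the classical case. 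Then one checks that $\cG$ contradicts the stability of $E$ with respect to $\omega_{t_k}$ for $k\gg 0$ (established in Section 3 via \cite{Tom19}): the point is that the slope inequality one extracts is stable under the perturbation $\omega_t=\pi^\star\omega_X+t\omega_{\wh X}$ because the volumes $\Vol_t(\wh X)$ and the relevant intersection numbers converge. This contradiction forces $m_k$ to be bounded, hence $\Tr(H_t)\le C$ uniformly.

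Once $\Tr(H_t)\le C$ is known, the remaining estimates fall out. From \eqref{rev6} and $\Tr(S_t)\ge 1$ (since $\Tr(s_t)=0$ forces $\Tr(\exp(s_t))\ge r\ge 1$) we get $\rho_t\ge -\log\Tr(H_t)\ge -C$; combined with the second inequality of \eqref{new21} and $\int_{\wh X}\rho_t\,dV_t=0$ (which bounds $\int|\rho_t|\,dV_t$ once $\rho_t\ge -C$, using $\Vol_t$ bounded), we obtain $\rho_t\le -C\log|s_D|^2$, i.e. the middle inequality of \eqref{est1}. For the energy estimate, I would integrate the pointwise inequality behind \eqref{new25}: integrating
\begin{equation}\label{pf-est-energy}
\Delta''_t\big(\log\Tr(H_t)\big)\ge c\,\frac{|D'_t H_t|^2}{\Tr(H_t)^2}-C
\end{equation}
against the weight $\log\frac1{|s_D|^2}$ over $\wh X$ (using that $\Delta''_t(\log\frac1{|s_D|^2})\ge -C$ by \eqref{rev3}-type bounds, so the boundary/weight terms have the right sign), together with the now-established $L^\infty$ bound on $\log\Tr(H_t)$ and on $\rho_t/\log|s_D|^2$, yields $\int_{\wh X}|D'_t H_t|^2\,dV_t/\log\frac1{|s_D|^2}\le C$; unwinding $H_t=e^{-\rho_t/r}S_t$ and using the bounds on $\rho_t$ converts this into the stated bound for $\int|D's_t|^2$. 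The main obstacle, as in all such arguments, is the Uhlenbeck--Yau step: ensuring that the weak limit $u_\infty$ is regular enough to define an honest coherent subsheaf, and that the slope computation survives the degeneration $t\to 0$ — this is precisely where the uniform curvature bound \eqref{ross2} and the uniform entropy bound (Lemma \ref{entro1}, feeding Corollary \ref{mv}) are indispensable, and where some care with the weight $\log\frac1{|s_D|^2}$ near $D$ is needed.
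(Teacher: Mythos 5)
Your overall architecture coincides with the paper's: a dichotomy between a uniform $L^1$ bound and a Simpson-type blow-up, the mean-value inequalities of Lemmas \ref{mv1} and \ref{1mv1} converting the $L^1$ bound into the pointwise estimates $\Tr(H_t)\le C$ and $-C\le\rho_t\le-C\log|s_D|^2$, and the construction of a destabilising subsheaf of $E$ (contradicting stability with respect to $\omega_t$ for $t\ll1$) to rule out the blow-up alternative. That part of your proposal is sound, up to the sketchiness you acknowledge: the non-vanishing of the weak limit, the fact that the background metric varies with $t$, and the $\delta_i\log\frac1{|\sigma_D|^2}$ growth of the rescaled endomorphisms all require the care the paper devotes to them in Lemma \ref{weak}.

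The genuine gap is in your derivation of the third estimate of \eqref{est1}. First, the pointwise inequality $\Delta''_t(\log\Tr H_t)\ge c\,|D'_tH_t|^2/\Tr(H_t)^2-C$ is not valid: by \eqref{rev40}--\eqref{rev41} one must subtract $|\partial\Tr(H_t)|^2/\Tr(H_t)^2$, and the Cauchy--Schwarz inequality $|\partial\Tr H_t|^2\le\Tr(H_t)\cdot\Tr\Lambda_t\big((D'_tH_t)\circ H_t^{-1}\circ(\dbar H_t)\big)$ underlying \eqref{rev42} (computed in an eigenbasis of $H_t$) is saturated on the diagonal part of $D'_tH_t$, so only the off-diagonal contribution of the Dirichlet density survives on the right-hand side — no full $|D'_tH_t|^2$ remains. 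Second, the claim $\Delta''_t\log\frac1{|s_D|^2}\ge-C$ uniformly in $t$ is false: away from $D$ this Laplacian equals $\Lambda_{\omega_t}\theta_D$ for a smooth closed form $\theta_D$, and since $\omega_t\to\pi^\star\omega_X$ degenerates along $D$, a lower bound $\theta_D\ge-C\omega_t$ cannot hold uniformly as $t\to0$; the uniform statement actually available (cf. \eqref{rev1}--\eqref{rev3}) is an \emph{upper} bound, which is the wrong sign for your integration by parts. The paper's route avoids both issues: it integrates the Hermite--Einstein equation against $\eta_t=\log H_t$ itself (Lemma \ref{W1}, which uses the mean-zero normalisation of $\Tr\eta_t$), obtaining $\int_{\wh X}\langle\Phi(\eta_t)(D'\eta_t),D'\eta_t\rangle dV_t\le C\int_{\wh X}\big(1+\log\frac1{|\sigma_D|^2}\big)|\Lambda_t\Theta(E,h_E)|dV_t\le C$, and then invokes the coercivity \eqref{ineqpsi} of $\Phi$, namely $\langle\Phi(\eta_t)A,A\rangle\ge\frac{1-e^{-a}}{a}|A|^2$ with $a$ the oscillation of the eigenvalues of $\eta_t$, which is at most $C+\log\frac1{|\sigma_D|^2}$ by the already-established $\cC^0$ bounds; this is precisely what produces the weight $1/\log\frac1{|s_D|^2}$ in front of $|D'\eta_t|^2$. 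You should replace your weighted-Bochner argument by this one.
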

\medskip

\noindent We remark that after establishing Theorem \ref{Est, I},  Theorem \ref{HE} will be an immediate consequence. To begin with,  
we recall the following.

\subsection{Canonical constructions} Recall that the vector bundle $E$ is endowed with a fixed, reference metric $h_E$ and let $s\in \End(E)$
be a Hermitian endomorphism of $E$. Let $\psi: \R\to \R$ and 
$\Psi: \R\times \R\to \R$ de two smooth functions. They induce maps
\begin{equation}\label{can1}
\psi: \End(E)\to \End(E), \qquad \Psi: \End(E)\to \End\big(\End(E)\big)
\end{equation}
as follows. Locally on each co-ordinate subset $U\subset \wh X$ we have an orthonormal frame $e_1,\dots, e_r$ of $E$ with respect to which $s$ is diagonal, say $s(e_i)= \lambda_i e_i$ for some real functions $\lambda_i$ defined on $U$. Then we set $\psi(s)(e_i):= \exp(\lambda_i)e_i$ for each $i= 1,\dots, r$ and we obtain in this way a -global- endomorphism $\psi(s)$. We note that this can also be defined by the more intrinsic formula
\begin{equation}\label{can2}
\psi(s)= \frac{1}{2\pi}\int_{\R}\wh \psi(\tau)\exp(\sqrt{-1}\tau s)d\tau
\end{equation}
where $\wh \psi$ is the Fourier transform of a suitable truncation of $\psi$ (obtained by taking into account the size of the eigenvalues of $s$).
\smallskip

\noindent Given a $(p, q)$-form $\End(E)$-valued $A$ we can locally write $\displaystyle A= \sum_{i,j} a^i_j e_i\otimes e^j$ where the coefficients $a^i_j$ are forms of type $(p, q)$ on $U$ and where $e^1,\dots, e^r$ is the dual basis on $E^\star$ induced by 
$e_1,\dots, e_r$. Then we define
\begin{equation}\label{can3}
\Psi(s)(A)|_U:= \sum_{i,j} \Psi(\lambda_i, \lambda_j)a^i_j e_i\otimes e^j 
\end{equation}and again this defines a global endomorphism of $E$. 

\noindent For the main properties of $\psi(s)$ and $\Psi(s)(A)$ we refer to \cite{Sim88} and the appendix in [LüTe]. An important property is that if $\Psi (\lambda_1, \lambda_2) >0$ for all $\lambda_1, \lambda_2$, then $\langle \Psi (s) A, A\rangle \geq 0$. It follows from \eqref{can3}.  

\noindent Moreover, we consider $\displaystyle \Psi (x, y) =\frac{e^{x-y}-1}{x-y}$ and we define $\lambda_{min}, \lambda_{max}$ as min and max of the eigenvalues of $s$. Set $a:= \lambda_{max} - \lambda_{min}$.
Then \eqref{can3} implies that the inequality 
\begin{equation}\label{ineqpsi}
	\langle \Psi (s) A, A\rangle \geq \frac{1- e^{-a} }{a} |A|^2,
\end{equation}	
holds true, a relation which will be very useful in the next subsection.
\smallskip

\noindent To end this subsection, we recall that the objects above have a natural extension in the context of Sobolev spaces. A section $\sigma$ of $\End(E)$
belongs to the space $W^{1, p}$ if $\sigma$ and $\dbar \sigma$ are in $L^p$.
For each positive real number $b> 0$  denote by $L^p_b$ and $W^{1, p}_b$
the space of sections in $L^p$ and $W^{1, p}$ respectively, whose $L^\infty$
norm is moreover bounded by $b$.
\medskip

\noindent In this context we have the following simple result.
\begin{lemme}\label{extensions} \cite{Sim88}
Let $S$ be the set of symmetric endomorphisms of $E$. We consider two functions $\psi$ and $\Psi$ as above. Then the following hold true:
\begin{enumerate}
  \smallskip

  \item The map $\psi$ extends to a continuous linear application
    $$\psi: L^p_b(S)\to L^p_{b'}(S)$$
    for some $b'> 0$.
\smallskip

  \item The map $\Psi$ extends to a linear application
    $$\Psi: L^p_b(S)\to \Hom\big(L^p(\End E), L^q(\End E)\big)$$
for any $q\leq p$ and it is moreover continuous in case $q< p$.     
\end{enumerate} 
\end{lemme}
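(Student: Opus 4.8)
The plan is to reduce both assertions to pointwise (fibrewise) statements about the nonlinear maps $s \mapsto \psi(s)$ and $s \mapsto \Psi(s)$ on the bundle $S$ of symmetric endomorphisms of $E$, and then to globalise using that $\wh X$ is compact (hence of finite volume) together with Hölder's inequality. The strategy throughout is to exploit the integral representation \eqref{can2}, which writes $\psi(s)$ as an absolutely convergent superposition of the unitaries $\exp(\sqrt{-1}\tau s)$. As a preliminary reduction, observe that if $s \in L^p_b(S)$ then, almost everywhere, $s$ is diagonalizable with eigenvalues in $[-b,b]$, and any convex combination $s_v := (1-v)s_0 + v s_1$ of two such sections again lies in $L^p_b(S)$ since $\Vert s_v \Vert_{L^\infty} \le b$. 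Hence we may replace $\psi$ by a compactly supported smooth function coinciding with it on a neighbourhood of $[-b,b]$ (and $\Psi$ by one coinciding with it near $[-b,b]^2$) without changing $\psi(s)$ or $\Psi(s)$ for $s \in L^p_b$; so we may assume $\widehat\psi$ is of Schwartz class and \eqref{can2} converges in operator norm. The spectral formula \eqref{can3} then yields the pointwise bounds $|\psi(s)| \le \sup_{[-b,b]}|\psi| =: b'$ and $|\Psi(s)|_{\mathrm{op}} \le \sup_{[-b,b]^2}|\Psi| =: c(b)$.

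The key technical input I would establish is the pair of operator-Lipschitz estimates
\[
|\psi(s_0) - \psi(s_1)| \le L(b)\,|s_0 - s_1|, \qquad |\Psi(s_0) - \Psi(s_1)|_{\mathrm{op}} \le L'(b)\,|s_0 - s_1|,
\]
holding pointwise for $s_0, s_1 \in L^p_b(S)$. For the first one I would use
\[
\exp(\sqrt{-1}\tau s_0) - \exp(\sqrt{-1}\tau s_1) = \sqrt{-1}\,\tau \int_0^1 \exp\big(\sqrt{-1}\tau(1-v)s_v\big)\,(s_0 - s_1)\,\exp\big(\sqrt{-1}\tau v\, s_v\big)\,dv
\]
with $s_v$ as above, note that each factor $\exp(\sqrt{-1}\tau v s_v)$ is unitary because $s_v$ is self-adjoint, and integrate against $|\widehat\psi(\tau)|\,|\tau|\,d\tau < \infty$; the bound for $\Psi$ is proved identically (or obtained by differentiating the one for $\psi$ in a parameter, since $\Psi$ arises as a divided difference of such a function). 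I expect this step — genuine Lipschitz control of the nonlinear maps $\psi$ and $\Psi$ in the operator norm, uniform over $L^\infty$-bounded sets — to be the only real point; everything afterwards is bookkeeping.

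Granting these, part (1) is immediate: the bound on $|\psi(s)|$ shows $\psi$ maps $L^p_b(S)$ into $L^p_{b'}(S)$, and integrating $|\psi(s_0) - \psi(s_1)| \le L(b)|s_0 - s_1|$ in $L^p$ gives $\Vert \psi(s_0) - \psi(s_1)\Vert_{L^p} \le L(b)\Vert s_0 - s_1\Vert_{L^p}$, i.e. (Lipschitz) continuity on $L^\infty$-bounded sets — note that $\psi$ itself is not linear, so continuity is the substantive assertion. The same argument, combined with the identity $\dbar\,\psi(s) = \Psi_\psi(s)(\dbar s)$ for the divided difference $\Psi_\psi(x,y) = (\psi(x) - \psi(y))/(x-y)$ and with part (2) applied to $\Psi_\psi(s)$, upgrades this to a continuous map on the corresponding $W^{1,p}$ spaces; the only care needed there is a routine density argument justifying the chain rule for Sobolev sections.

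For part (2), fix $s \in L^p_b(S)$. The pointwise bound $|\Psi(s)(A)| \le c(b)|A|$ yields $\Vert \Psi(s)(A)\Vert_{L^p} \le c(b)\Vert A\Vert_{L^p}$, so $\Psi(s) \in \Hom(L^p(\End E), L^p(\End E))$, hence also lies in $\Hom(L^p(\End E), L^q(\End E))$ for every $q \le p$ since $\wh X$ has finite volume; this is the claimed linear operator. For continuity in $s$ when $q < p$, I would estimate, using the Lipschitz bound and then Hölder,
\[
\big\Vert \big(\Psi(s_0) - \Psi(s_1)\big)(A)\big\Vert_{L^q} \le L'(b)\,\big\Vert\, |s_0 - s_1|\,|A|\,\big\Vert_{L^q} \le L'(b)\,\Vert s_0 - s_1\Vert_{L^r}\,\Vert A\Vert_{L^p},
\]
where $\tfrac1r = \tfrac1q - \tfrac1p > 0$. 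Since the $s_i$ stay bounded in $L^\infty$ by $b$, interpolation between $L^p$ and $L^\infty$ (or the inclusion $L^p \hookrightarrow L^r$ when $r \le p$) bounds $\Vert s_0 - s_1\Vert_{L^r}$ by a fixed power of $\Vert s_0 - s_1\Vert_{L^p}$, which tends to $0$; this gives operator-norm continuity of $s \mapsto \Psi(s)$ into $\Hom(L^p, L^q)$. The argument degenerates exactly at $q = p$, where $r = \infty$ and one only controls $\Vert s_0 - s_1\Vert_{L^\infty} \le 2b$ — which is precisely why continuity is not asserted in that case.
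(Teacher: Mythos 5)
The paper offers no proof of its own here: Lemma \ref{extensions} is simply imported from \cite{Sim88}, so there is nothing to compare with step by step. The classical argument behind the citation proceeds by uniform approximation of $\psi$ and $\Psi$ by polynomials on the compact sets $[-b,b]$ and $[-b,b]^2$, for which continuity on $L^\infty$-bounded subsets of $L^p$ is elementary. Your route is different and in fact yields more: after truncating $\psi$ near $[-b,b]$ (consistent with the paper's own remark after \eqref{can2}) you extract a genuine pointwise operator-Lipschitz bound $|\psi(s_0)-\psi(s_1)|\le L(b)|s_0-s_1|$, and similarly for $\Psi$ in operator norm; part (1) and the $q<p$ continuity in part (2) then follow by integration, H\"older and interpolation exactly as you write, and your observation that the H\"older exponent degenerates precisely at $q=p$ explains why continuity is only claimed for $q<p$. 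The uniform bounds $|\psi(s)|\le b'$ and $|\Psi(s)|_{\mathrm{op}}\le c(b)$, together with finite volume of $\wh X$, settle the mapping properties, so the overall structure is sound.

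One slip should be repaired. The displayed ``Duhamel'' identity, with $s_v=(1-v)s_0+vs_1$ appearing in \emph{both} exponential factors under a single integral, is not an identity for noncommuting $s_0,s_1$ (it checks out on scalars, which is misleading). The correct statements are either the standard formula $e^A-e^B=\int_0^1 e^{vA}(A-B)\,e^{(1-v)B}\,dv$ with $A=\sqrt{-1}\tau s_0$, $B=\sqrt{-1}\tau s_1$, or the double integral $\int_0^1\!\int_0^1 \exp(\sqrt{-1}\tau u s_v)(s_0-s_1)\exp(\sqrt{-1}\tau(1-u)s_v)\,du\,dv$ obtained by differentiating $v\mapsto\exp(\sqrt{-1}\tau s_v)$. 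In either corrected version all exponential factors are unitary because $s_0$, $s_1$, $s_v$ are self-adjoint, so the bound you actually use, $|\exp(\sqrt{-1}\tau s_0)-\exp(\sqrt{-1}\tau s_1)|\le|\tau|\,|s_0-s_1|$, and hence the constant $L(b)=\frac1{2\pi}\int|\widehat\psi(\tau)||\tau|\,d\tau$, are unaffected and nothing downstream changes. For the claim that the $\Psi$-estimate is ``proved identically'' you should also record the two-variable analogue of \eqref{can2}, namely $\Psi(s)(A)=\frac1{(2\pi)^2}\int\widehat\Psi(\tau_1,\tau_2)\exp(\sqrt{-1}\tau_1 s)\,A\,\exp(\sqrt{-1}\tau_2 s)\,d\tau_1\,d\tau_2$, which is what makes that sentence literally true.
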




\medskip

\subsection{End of the proof}  Our first target is to establish a differential inequality for the derivative $D's_t$, cf. lemma below. Along the following lines we evaluate the scalar product 
$\displaystyle \langle(D'S_t)S_t^{-1}, D's_t\rangle$ at each point of $\wh X$.

\noindent To this end we follow the method in \cite{Sim88}. Let $e_1,\dots, e_r$ be a local orthogonal frame of $(E, h_E)$ with respect to which $s_{t}$ is diagonal, so that we have

\begin{equation}
S_t= \sum\exp(\lambda_i)e_i\otimes e_i^\star
\end{equation}
for some functions $\lambda_i$ defined locally. Then 
 it follows that 
\begin{equation}\label{prop4}
(D'S_t)S_t^{-1}= \sum \partial \lambda_i e_i\otimes e_i^\star+ \sum (\exp(\lambda_i- \lambda_j)-1)a_{ij}e_i\otimes e_j^\star
\end{equation}
where the $a_{ij}$ are the coefficients of the connection matrix corresponding to the frame $e_1,\dots, e_r$. 

\noindent Therefore we have the equality
\begin{equation}\label{alt2}
\langle(D'S_t)S_t^{-1}, D's_t\rangle= \sum |\partial \lambda_i|^2_t + 
\sum (\lambda_i- \lambda_j)\big(\exp(\lambda_i- \lambda_j)- 1)\big)|a_{ij}|^2_t.
\end{equation}
as well as 
\begin{equation}\label{alt3}
\langle D's_t, D's_t\rangle= \sum |\partial \lambda_i|^2_t + 
\sum (\lambda_i- \lambda_j)^2|a_{ij}|^2_t
\end{equation}
which imply that 
\begin{equation}\label{alt4}
\langle(D'S_t)S_t^{-1}, D's_t\rangle \geq \langle \Phi(s_t)(D's_t), D's_t\rangle
\end{equation}
where $\displaystyle \Phi(x, y):= \frac{\exp(x-y)- 1}{x-y}$ (cf. notations in the previous section).
\smallskip

\noindent Our next step is the following statement.

\begin{lemme}\label{W1} Let $\eta_t:= \log H_t$ be the logarithm of the endomorphism defining the Hermite-Einstein metric. Then the inequality
\begin{equation}\label{alt5}
0 \geq  
\int_{\wh X}\left\langle \Phi(\eta_t)(D'\eta_t), D'\eta_t\right\rangle dV_t+ \int_{\wh X} \Tr \big(\eta_t \Lambda_{t}\Theta(E, h_E)\big)dV_t
\end{equation}
holds for every $t$.
\end{lemme}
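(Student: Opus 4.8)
The plan is to derive \eqref{alt5} by integrating by parts the identity for $\Delta_t''\log\Tr(H_t)$ (or rather its matrix-valued refinement), exactly in the spirit of Simpson's argument in \cite{Sim88}. First I would record the fundamental differential identity relating $\eta_t = \log H_t$ to the Hermite--Einstein equations for $h_E$ and $h_{t,\infty}$: since $h_{t,\infty}$ is HE with respect to $\omega_t$ while $h_E$ is merely a reference metric, the endomorphism $H_t = h_E^{-1} h_{t,\infty}$ (up to the conformal factor $e^{-\rho_t/r}$, which contributes a term that is absorbed because $\Tr \eta_t$ is controlled and the trace-free part is what matters) satisfies an equation of the shape
\begin{equation}\label{planeq1}
\Lambda_t \dbar\big(H_t^{-1} D'_{h_E} H_t\big) = \Lambda_t\Theta(E, h_E) - \text{(constant)}\cdot \Id_E
\end{equation}
on $\wh X$. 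This is the standard reformulation of the HE condition for $h_{t,\infty}$ in terms of the reference metric $h_E$; the constant on the right is the slope appearing in \eqref{eq4}.

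Next, following \cite{Sim88}, I would pair \eqref{planeq1} with $\eta_t$ in the $L^2(\wh X,\omega_t)$ inner product on $\End(E)$-valued forms, and integrate by parts. The left-hand side produces, after moving $\dbar$ onto $\eta_t$, a term of the form $\langle \Psi(\eta_t)(D'\eta_t), D'\eta_t\rangle$ for the specific function $\Psi(x,y) = \frac{e^{x-y}-1}{x-y} = \Phi(x,y)$ — this is precisely the content of the canonical-construction calculus recalled in the previous subsection, cf. \eqref{prop4}--\eqref{alt4}, applied now with $s$ replaced by $\eta_t$. The right-hand side produces $\Tr(\eta_t \Lambda_t \Theta(E,h_E))$, while the constant term integrates against $\Tr\eta_t$; if one works with the trace-free part $s_t$ of $\eta_t$ this last term drops, and the $\rho_t$-contribution is handled similarly using that $\Tr\eta_t = -\tfrac{1}{r}\rho_t \cdot r = -\rho_t$ together with \eqref{eq4}. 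The inequality (rather than equality) in \eqref{alt5} arises because the integration by parts genuinely gives $(D'S_t)S_t^{-1}$ paired with $D's_t$, and \eqref{alt4} only bounds this below by the $\Phi(s_t)$-quadratic form; one keeps the direction of the inequality by noting the discarded terms are nonnegative.

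The main obstacle is \emph{justifying the integration by parts on the possibly non-compact / singular picture}: although $\wh X$ is a compact manifold, the metric $\omega_t$ degenerates (its volume form has the entropy behaviour of Lemma \ref{entro1}), and the reference curvature $\Theta(E,h_E)$ is only bounded below by $-C\pi^\star\omega_X$, not two-sidedly, so a priori the integrand $\Tr(\eta_t\Lambda_t\Theta(E,h_E))$ need not be integrable and the boundary terms need not vanish. I would deal with this by the usual cut-off: multiply by a sequence of truncation functions supported away from the exceptional locus $D$ (built from $\log|s_D|^2$), apply the smooth integration by parts there, and pass to the limit using the a priori bounds already at our disposal — the $L^1_t$ control on $\rho_t$ and $\log\Tr H_t$ from Lemmas \ref{mv1} and \ref{1mv1}, the entropy bound, and the one-sided curvature bound \eqref{ross2} which makes the relevant term bounded above (hence its negative part integrable). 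Since all quantities appearing are smooth on $\wh X$ for each \emph{fixed} $t$ (as $h_{t,\infty}$ is a genuine smooth HE metric by Uhlenbeck--Yau), the cut-off limit is routine, and one obtains \eqref{alt5} as stated.
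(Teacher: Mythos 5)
Your proposal is correct and follows essentially the same route as the paper: write the Hermite--Einstein equation for $h_{t,\infty}$ relative to the reference metric $h_E$, pair with $\eta_t$, use the normalisation $\int_{\wh X}\rho_t\,dV_t=0$ to kill the constant term, integrate by parts, and invoke the pointwise inequality \eqref{alt4} to pass to the $\Phi$-quadratic form. The only superfluous element is the worry about truncation near $D$: for each fixed $t>0$ the metric $\omega_t$ is a genuine smooth K\"ahler metric on the compact manifold $\wh X$ and all objects are smooth, so the integration by parts needs no justification — as you in fact acknowledge at the end.
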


\begin{proof}
The HE equation for the metric $h_{t, \infty}$ (notations as in Section $3$) writes
\begin{equation}\label{prop2}
\gamma_t \Id- \Lambda_{t}\Theta(E, h_E)= \Lambda_{t} \dbar\left((D'H_t)H_t^{-1}\right).
\end{equation}
We compose the equality \eqref{prop2} with the endomorphism $$\eta_t= -\frac{\rho_t}{r}\otimes \Id+ s_t,$$ take the trace and integrate over $X$. Since 
the mean of the trace of $\displaystyle \eta_t$ is equal to zero (this is where the normalisation of $\rho_t$ plays a role), 
we get
\begin{equation}\label{alt6}
0 =  
\int_{\wh X}\Tr \left(D^{\prime\star}\big((D'H_t)H_t^{-1}\big)\circ \eta_t\right)dV_t+ \int_{\wh X} \Tr \big(\eta_t \Lambda_{t}\Theta(E, h_E)\big)dV_t
\end{equation}
and we remark that the equality
\begin{equation}\label{alt7}
\int_{\wh X}\Tr \left(D^{\prime\star}\big((D'H_t)H_t^{-1}\big)\circ \eta_t\right)dV_t= 
\int_{\wh X} \langle(D'H_t)H_t^{-1}, D'\eta_t\rangle dV_t
\end{equation}
holds, since $\eta_t$ is self-adjoint.
\smallskip

\noindent In conclusion, the first integral on the RHS of \eqref{alt6} coincides with the LHS of \eqref{alt4} which ends the proof of our claim. A last remark is that we have $$D'\eta_t= (\dbar \eta_t)^\star,$$ so we can freely replace $D'$ with $\dbar$ in \eqref{alt6}. 
\end{proof}

\begin{remark} This lemma will be used exactly as in \cite{Sim88} in order to show the existence of a destabilising sheaf in case the $L^1$ norm of $H_t$ tends to infinity as $t\to 0$.
Arguments similar to \cite[pages 885-888]{Sim88} will show that if the 
$L^1$ norm of $H_t$ tends to $\infty$ as $t$ tends to zero, then we get a destabilising sub-sheaf of $E$, where the degree is measured with respect to  
$\displaystyle\pi^\star \omega_{X}$. 
\end{remark}

\medskip

\begin{remark} The metrics $h_{t, \infty}$ are already HE, so it might be possible to conclude by a more direct argument than the following one. For example, one could imagine an iteration scheme in order to bound the 
$L^1$ norm of $H_t$, by combining the mean-value inequality with Lemma \ref{W1}, but the problem is the
absence of a uniform Sobolev constant for $(\wh X, \omega_t)$. \footnote{In the meantime, i.e. after the first version of this article was released, 
the sought-after Sobolev inequality was established in \cite{GPSS} by B. Guo, D. H. Phong, J. Song and J. Sturm. Thus, there is some hope to avoid the case-by-case discussion that follows.}
\end{remark}

\medskip

\noindent Anyway, as $t$ tends to zero two --mutually exclusive-- cases can occur:

\begin{enumerate}
\smallskip

\item[(1)] There exist a constant $C> 0$ such that 
$$\int_{\wh X}\big(|\rho_t|+ \log {\Tr S_t}\big) dV_t \leq C$$
for all positive $t$ (note that $\log \Tr S_t\geq \log r$ since the trace of $s_t$ is equal to zero). 
\smallskip

\item[(2)] There exist sequences $(\delta_i)_{i\geq 1}$ and  $(t_i)_{i\geq 1}$ of positive numbers 
converging towards zero such that 
$$\int_{\wh X}\big(|\delta_i\rho_i|+ \delta_i \log {\Tr S_i}\big) dV_i = 1$$
where we recall that $r$ is the rank of $E$. We denote by $dV_i$ is the
volume element corresponding to the metric 
$\displaystyle \omega_{t_i}$. Also, $\rho_i$ and $S_i$ correspond to $\displaystyle \rho_{t_i}$ and $\displaystyle S_{t_i}$, respectively.
\end{enumerate}
\medskip

\subsubsection{The analysis of the first case} Thanks to the inequality \eqref{rev7}, the uniform estimate given in case (1) implies that we have
\begin{equation}\label{1stcase1}
\Tr H_t\leq C, \qquad -C\leq \rho_t\leq -C\log|s_D|^2
\end{equation}
for all positive $t\ll 1$, so that the first part of Theorem \ref{Est, I} is proved.
\smallskip

\noindent We combine \eqref{1stcase1} with the inequality \eqref{alt5},
so we get
\begin{equation}\label{1stcase3}
\int_{\wh X}\left\langle \Phi(\eta_t)(D'\eta_t), D'\eta_t\right\rangle dV_t
\leq
 C\int_{\wh X} \big(1+ \log\frac{1}{|\sigma_D|^2}\big) |\Lambda_{t}\Theta(E, H_0)|dV_t
\end{equation}

\noindent It is clear that we have $\displaystyle \sup_{\wh X}|\Lambda_{t}\Theta(E, H_0)|dV_t\leq CdV$ point-wise on $\wh X$, where $dV$ is the volume element corresponding to a fixed reference metric on $\wh X$ and $C$ is
a constant independent of $t$.

\noindent Moreover, the estimate \eqref{1stcase1} shows that the inequality
\begin{equation}\label{1stcase4}
\left\langle \Phi(\eta_t)(D'\eta_t), D'\eta_t\right\rangle 
\geq \frac{C}{\log\frac{1}{|\sigma_D|^2}}|D'\eta_t|^2
\end{equation}
holds, provided that the positive constant $C\ll 1$ is small enough.
\smallskip

\noindent All in all, we obtain
\begin{equation}\label{1stcase5}
\int_{\wh X}
\frac{1}{\log\frac{1}{|\sigma_D|^2}}|D'\eta_t|^2dV_t\leq C
\end{equation}
which ends the proof of Theorem \ref{Est, I}.

\noindent Combined with the fact that $h_{t, \infty}$ is Hermite-Einstein, we can take the limit on any compact subset of $\wh X\setminus D$ \footnote{this is standard, see e.g. \cite{BS94}.} and conclude that we have
\begin{equation}\label{1stcase6}
\int_{\wh X\setminus D}
\frac{1}{\log\frac{1}{|\sigma_D|^2}}|D'\eta_{\infty}|^2dV_{0}\leq C, \qquad
|\eta_{\infty}|\leq C+ \log\frac{1}{|\sigma_D|^2}.
\end{equation}
\medskip

\subsubsection{The analysis of the second case} We show that this cannot occur by modification of the usual arguments.
This is due to the presence of $\log|s_D|$ in the estimates for
$s_t$. Moreover, as opposed to the standard context, the background metric $\omega_t$ also varies. For all these reasons, we have decided to provide a complete treatment. 
\smallskip

\noindent Let $\displaystyle u_i:= \delta_i\eta_i= -\frac{\delta_i}{r}\rho_i\otimes \Id_E+ \delta_is_i$; in this case we get
\begin{equation}\label{rev9}
\sup_{\wh X}\big(-\delta_i\rho_i+ \delta_i\log\Tr(S_i)\big)\leq C
\end{equation}
and then it follows from Lemma \ref{mv1} that we have
\begin{equation}
\sup_{\wh X}\big(\delta_i|\rho_i|+ \delta_i\log\Tr(S_i)\big)\leq C(1- \delta_i\log|\sigma_D|^2).
\end{equation}
Indeed, we use the normalisation of $\rho_i$ together with the fact that $\Tr(S_i)\geq r$. Note that we have 
\begin{equation}\label{rev20}
\delta_i\log\frac{\Tr(S_i)}{r}\geq \log\frac{\Tr(S_i^{\delta_i})}{r}
\end{equation}
by concavity, so it follows 
that the inequality 
\begin{equation}\label{2ndcase2}
|u_i|\leq C+ \delta_i\log\frac{1}{|\sigma_D|^2}
\end{equation}
holds true on $\wh X$ (by using one more time that the trace of $s_i$ equals zero).
\medskip

\noindent The important step towards the contradiction we are looking for is the 
following result (analogue of Lemma 5.4 in \cite{Sim88}).
\begin{lemme}\label{weak} There exist a subsequence of $(u_i)_{i\geq 1}$ converging weakly to a limit
$u_\infty$ on compact subsets of $X\setminus D$ such that the following hold.
\begin{enumerate}
\smallskip

\item[\rm (1)] The endomorphism $u_\infty$ is non-identically zero and it belongs to the space $H^1$ (i.e. it is in $L^2$ together with its differential with respect to the pull-back metric $\pi^\star\omega_X$).
\smallskip

\item[\rm (2)] Let $\Psi:\R\times \R\to \R_+$ be a smooth, positive function such that $\displaystyle \Psi(a, b)< \frac{1}{a-b}$ holds for any
$a> b$. Then we have 
$$0 \geq  
\int_{\wh X}\left\langle \Psi(u_\infty)(D'u_\infty), D'u_\infty\right\rangle dV_0+ \int_{\wh X} \Tr \big(u_\infty \Lambda_{0}\Theta(E, h_E)\big)dV_0$$
where $\Lambda_0$ and $dV_0$ are the contraction with and the volume element corresponding to $\pi^\star\omega_X$, respectively. 
\end{enumerate}
\end{lemme}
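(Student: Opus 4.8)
The plan is to adapt the argument of \cite[Lem.~5.4]{Sim88}, the two genuinely new features being the degeneration $\om_{t_i}\to\pi^\star\om_X$ of the background metrics as $t_i\to 0$ and the logarithmic weight $\log|s_D|^{2}$ appearing in the pointwise estimate \eqref{2ndcase2}. The engine is the differential inequality \eqref{alt5}. Since $u_i=\delta_i\eta_i$ one has $D'\eta_i=\delta_i^{-1}D'u_i$, so multiplying \eqref{alt5} by $\delta_i>0$ gives
\[
\int_{\wh X}\big\langle \delta_i^{-1}\Phi(\eta_i)(D'u_i),D'u_i\big\rangle\,dV_i\ \le\ -\int_{\wh X}\Tr\big(u_i\,\Lambda_{i}\Theta(E,h_E)\big)\,dV_i ,
\]
where $\Phi(x,y)=\frac{e^{x-y}-1}{x-y}$ and $\Lambda_i,dV_i$ refer to $\om_{t_i}$. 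The first observation is that the right hand side is bounded uniformly in $i$: by \eqref{2ndcase2} we have $|u_i|\le C+\delta_i\log\frac1{|s_D|^{2}}$, while the identity relating $\Lambda_{\om}$ to wedging with $\om^{n-1}$, together with $\om_{t_i}\le C\om_{\wh X}$ and the smoothness of $\Theta(E,h_E)$ on $\wh X$, shows that $|u_i|\,|\Lambda_i\Theta(E,h_E)|\,dV_i$ is dominated by the fixed integrable function $C\big(1+\log\frac1{|s_D|^{2}}\big)$; hence $\big|\int\Tr(u_i\Lambda_i\Theta)\,dV_i\big|\le C$ independently of $i$, and moreover, once $u_i\to u_\infty$ a.e., dominated convergence yields $\int_{\wh X}\Tr(u_i\Lambda_i\Theta)\,dV_i\to\int_{\wh X}\Tr(u_\infty\Lambda_0\Theta)\,dV_0$.

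I would next establish compactness. On a compact $K\Subset\wh X\setminus D$ (on which $\om_{t_i}$ is uniformly comparable to $\pi^\star\om_X$, so that all relevant norms and volumes converge uniformly) the weight $\log\frac1{|s_D|^{2}}$ is bounded, hence $|u_i|\le C_K$ on $K$ for $i$ large; the eigenvalue gaps of $\eta_i$ on $K$ then lie in $[-2C_K/\delta_i,\,2C_K/\delta_i]$, and an elementary estimate of $t\mapsto\frac{e^{t/\delta_i}-1}{t}$ on that range gives $\delta_i^{-1}\Phi(\eta_i)\ge C_K^{-1}\,\Id$ as a positive operator on $K$. Inserting this into the engine yields $\int_K|D'u_i|^{2}\,dV_i\le C_K$, uniformly in $i$. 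Combined with the uniform $C^0$--bound and Rellich's theorem, a diagonal extraction over an exhaustion of $\wh X\setminus D$ by such compacts produces a subsequence with $u_i\rightharpoonup u_\infty$ weakly in $H^1_{\mathrm{loc}}$, strongly in $L^2_{\mathrm{loc}}$ and a.e.; passing to the limit in \eqref{2ndcase2} and using $\delta_i\to0$ gives $|u_\infty|\le C$ a.e., so $u_\infty\in L^\infty$.

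The heart of the matter is the non-vanishing of $u_\infty$. Writing $u_i=-\frac{\delta_i}{r}\rho_i\otimes\Id+\delta_i s_i$ with $\Tr s_i=0$ gives $|u_i|^{2}=\frac{\delta_i^{2}}{r}\rho_i^{2}+\delta_i^{2}|s_i|^{2}$ and $\log\Tr S_i\le\log r+\lambda_{\max}(s_i)\le\log r+|s_i|$, whence $\delta_i|\rho_i|+\delta_i\log\Tr S_i\le(\sqrt r+1)|u_i|+\delta_i\log r$; integrating against $dV_i$ and using the normalisation $\int_{\wh X}(\delta_i|\rho_i|+\delta_i\log\Tr S_i)\,dV_i=1$ forces $\int_{\wh X}|u_i|\,dV_i\ge c_0>0$ for $i\gg0$. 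On the other hand, by \eqref{2ndcase2} and $dV_i\le C\,dV_{\wh X}$, for any neighbourhood $U$ of $D$ one has $\int_U|u_i|\,dV_i\le C\,\Vol(U,\om_{\wh X})+\delta_i\int_{\wh X}\log\frac1{|s_D|^{2}}\,dV_{\wh X}$, which drops below $c_0/2$ once $U$ is small and then $i$ is large. Thus the $L^1$--mass of $u_i$ cannot escape into $D$: on the fixed compact $K_0:=\wh X\setminus U$ we have $\int_{K_0}|u_i|\,dV_i\ge c_0/2$, and letting $i\to\infty$ (strong $L^2_{\mathrm{loc}}$ convergence, $dV_i\to dV_0$) gives $\int_{K_0}|u_\infty|\,dV_0\ge c_0/2>0$, so $u_\infty\not\equiv0$. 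I expect this no-loss-of-mass step — which must be reconciled with the simultaneous degeneration of $\om_{t_i}$ and which relies crucially on the logarithmic $C^0$--estimate \eqref{2ndcase2} — to be the main obstacle; everything else is a weighted version of the arguments in \cite{Sim88}.

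Finally, for part (2): fix $\Psi$ as in the statement (which we may take symmetric). On $K\Subset\wh X\setminus D$, for $i$ large, comparing $\frac{e^{t/\delta_i}-1}{t}$ — which tends to $+\infty$ for $t\ge0$ and increases to $\frac1{|t|}$ for $t<0$, uniformly for $|t|$ in a compact set — with the bounded function $\Psi$, and using the hypothesis $\Psi(a,b)<\frac1{a-b}$ for $a>b$, shows $\delta_i^{-1}\Phi(\eta_i)\ge\Psi(u_i)$ as positive operators on $K$. Hence $\int_K\langle\Psi(u_i)(D'u_i),D'u_i\rangle\,dV_i\le-\int_{\wh X}\Tr(u_i\Lambda_i\Theta)\,dV_i$; since $\Psi(u_i)\to\Psi(u_\infty)$ a.e.\ and is uniformly bounded on $K$, while $D'u_i\rightharpoonup D'u_\infty$ weakly in $L^2(K)$, lower semicontinuity of the non-negative quadratic functional (equivalently, the weak convergence $\sqrt{\Psi(u_i)}\,D'u_i\rightharpoonup\sqrt{\Psi(u_\infty)}\,D'u_\infty$ in $L^2(K)$) gives
\[
\int_K\langle\Psi(u_\infty)(D'u_\infty),D'u_\infty\rangle\,dV_0\ \le\ \liminf_i\int_K\langle\Psi(u_i)(D'u_i),D'u_i\rangle\,dV_i\ \le\ -\int_{\wh X}\Tr(u_\infty\Lambda_0\Theta)\,dV_0 .
\]
Letting $K$ exhaust $\wh X\setminus D$ (monotone convergence) proves the asserted inequality. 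Since $u_\infty\in L^\infty$, the operator $\Psi(u_\infty)$ is bounded below by a positive constant, so the finiteness of the left hand integral just obtained forces $D'u_\infty\in L^2(\wh X,\pi^\star\om_X)$; together with $u_\infty\in L^\infty\subset L^2$ and $u_\infty\not\equiv0$, this establishes part (1).
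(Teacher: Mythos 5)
Your proof is correct and follows the same overall strategy as the paper: the engine is the integral inequality \eqref{alt5} rescaled by $\delta_i$ (the paper's \eqref{rev31}), the pointwise bound \eqref{2ndcase2} both prevents $L^1$-mass from escaping into a neighbourhood of $D$ and gives local uniform bounds on $u_i$ away from $D$, local coercivity of $\delta_i^{-1}\Phi(\eta_i)$ yields uniform $W^{1,2}_{\mathrm{loc}}$ bounds, and Rellich plus a diagonal extraction produce the limit; part (2) is then the standard comparison of $\delta_i^{-1}\Phi(\eta_i)$ with $\Psi$ together with weak lower semicontinuity, exactly as in Simpson's Lemma~5.4.

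The one place where you genuinely diverge is the non-vanishing of $u_\infty$. The paper bounds $\delta_i\log\Tr S_i$ by $\delta_i\log r+r|u_i|+\delta_i|\rho_i|$ and arrives at \eqref{rev10}, whose right-hand side still contains $-2\int\delta_i|\rho_i|\,dV_i$ and is therefore not manifestly positive; it then argues by contradiction, showing that $u_\infty\equiv 0$ would force $\delta_i\rho_i\to\rho_\infty=0$ (via the equation \eqref{eq4} defining $\rho_i$), which restores the lower bound $\int|u_i|\,dV_i\gtrsim 1$ and contradicts \eqref{new38}. You instead exploit the orthogonal decomposition $|u_i|^2=\tfrac{\delta_i^2}{r}\rho_i^2+\delta_i^2|s_i|^2$ to control \emph{both} terms of the normalisation by $|u_i|$, obtaining the unconditional bound $\int_{\wh X}|u_i|\,dV_i\ge c_0>0$ and then localising the mass on a fixed compact away from $D$. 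This is a mild but genuine streamlining: it bypasses the limit $\rho_\infty$ and the contradiction setup entirely, at no cost. Everything else in your write-up matches the paper's argument step for step.
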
 

\begin{proof}
The main arguments are taken from \cite{Sim88}, pages 885-886. The slight difference is that instead of being uniformly bounded,
the sequence $(u_i)$ only satisfies \eqref{2ndcase2}. Nevertheless, it is still true that given $\ep> 0$ there exists $|D|\subset U_\ep\subset X$ 
a small enough open subset $U_\ep$ containing the support of $D$ such that 
\begin{equation}\label{new38}
\int_{U_\ep}|u_i|dV_X< \ep
\end{equation}
for any index $i$. This is an immediate consequence of \eqref{2ndcase2}. 
\smallskip

\noindent The differential inequality satisfied by the endomorphism $\eta_t$ implies that we have 
\begin{equation}\label{rev31}
\frac{1}{\delta_i}\int_{\wh X}\left\langle \Phi(u_i/\delta_i)(D'u_i), D'u_i\right\rangle dV_i+ \int_{\wh X} \Tr \big(u_i\Lambda_{i}\Theta(E, h_E)\big)dV_i\leq 0
\end{equation}
which shows that for each compact subset $K\subset \wh X\setminus D$ there exists an index $i_K$ such that 
\begin{equation}\label{rev32}
\int_{K}\left\langle \Psi(u_i)(D'u_i), D'u_i\right\rangle dV_i+ \int_{\wh X} \Tr \big(u_i\Lambda_{i}\Theta(E, h_E)\big)dV_i\leq 0
\end{equation}
for all $i\geq i_K$. The inequality \eqref{rev32} follows from the estimate \eqref{2ndcase2}, combined with the explicit expression of  
$\Phi$ and the properties of $\Psi$.
\medskip

\noindent Next, the absolute value of the eigenvalues of $u_i|_K$ are uniformly bounded as soon as $i\geq i_K$, and it follows from \eqref{rev32} that 
\begin{equation}
\int_{K}|D'u_i|^2dV_i\leq C
\end{equation}
for all $i\geq i_K$. By a diagonal procedure we can extract a sequence from $(u_i)_{i\geq 1}$ which converges weakly to an endomorphism 
$u_\infty$ in $W^{1, 2}$ on compact subsets contained in the complement of $D$. Moreover, by Rellich embedding theorem we can assume that 
\begin{equation}\label{rev34}
\Vert u_i-u_{\infty}\Vert_{L^2(K)}\to 0
\end{equation}
as $i\to \infty$, for any compact subset $K\subset X\setminus D$. 
\medskip

\noindent We now show that \eqref{rev34} implies that the limit $u_{\infty}$ is not identically zero. To this end, first note that by the normalisation in the second case we get 
\begin{equation}
\int_{\wh X}\delta_i \log {\Tr S_i}dV_i \geq  1- \int_{\wh X}\delta_i |\rho_i|dV_i 
\end{equation}

\noindent Since the trace of $s_i$ is equal to zero, we get
\begin{equation}\label{rev21}
\delta_i\log{\Tr(S_i)}\leq \delta_i\log r+ r|u_i|+ \delta_i |\rho_i|
\end{equation}
and therefore
\begin{equation}\label{rev10}
r\int_{\wh X}|u_i|dV_i \geq 1-\delta_i\log r- 2\int_{\wh X}\delta_i |\rho_i|dV_i.
\end{equation}
Assume that the weak limit (in $H^1$) of 
$\displaystyle (u_i)_{i\geq 1}$ is identically zero. We have a uniform bound for the $L^\infty$ norm of 
$\delta_i\rho_i$ on compact sets, which together with the equation defining $\rho_i$ shows that $(\delta_i\rho_i)$ converges to a smooth limit 
say $\rho_\infty$. Then we have
\begin{equation}\label{rev111}
u_\infty= -\frac{\rho_\infty}{r}\otimes \Id_E+ s_\infty
\end{equation}
a.e. on $\wh X$. If the LHS of \eqref{rev111} is zero, then both $\rho_\infty$ and $s_\infty$ must be equal to zero, as we see
by taking the trace in \eqref{rev111}. Combined with \eqref{new38}, this contradicts \eqref{rev10} as $i\to\infty$.
\end{proof} 

\medskip

\noindent Lemma \ref{weak} implies that:
\smallskip

\noindent $\bullet$ the eigenvalues of $u_\infty$ are constant a.e. on $\wh X$, and they are not all equal (here the trace of $u_\infty$ is not zero, but this is the case for the average of the trace showing that at least two of the eigenvalues of this endomorphism must be different). 
\smallskip

\noindent $\bullet$ some appropriate eigenspace of $u_\infty$ defines a destabilising subsheaf of $E$.
\medskip

\noindent The two bullets above are established in \cite{Sim88} and we will not reproduce the proof here. The fact that we are working with 
$\pi^\star\omega_X$ rather than with a genuine Kähler metric is not relevant. 
\medskip

\begin{proof}
\noindent Theorem \ref{HE} is an immediate consequence of Theorem 
\ref{Est, I}. The metric $h_\cF$ is obtained by push-forward of the limit metric combined with\eqref{mt5}.
\end{proof}
\medskip

\noindent Theorem \ref{HE} implies the next statement.

\begin{cor}\label{C0}
Assume that the hypothesis of Theorem \ref{HE} are satisfied. Then there exists positive constants $C> 0, N> 0$ such that the inequality
\begin{equation}\label{ross82}
\Tr(H)\leq C, \qquad H\geq C^{-1}|s_D|^N\Id_E
\end{equation}
holds at each point of $\wh X\setminus D$. In particular, it follows that given any local holomorphic section $\tau\in H^0(V, \cF|_V)$ defined on an open subset $V\subset X$, for any $V'\Subset V$ there exists a constant $C(\tau, V')> 0$ such that we have
\begin{equation}\label{ross83}
\sup_{V_0'}(|\tau|_{h_\cF})\leq C(\tau, V'),
\end{equation}
where $V_0':= V'\cap X_0$.
\end{cor}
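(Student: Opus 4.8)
The plan is to read everything off from the estimates already established on the desingularization $\wh X$, namely Theorem~\ref{Est, I} together with the relations \eqref{1stcase1} and \eqref{1stcase6}. Since the second case analyzed in Section~\ref{smoothset} cannot occur, we are in the first case, and Theorem~\ref{Est, I} applies; in particular the bound $\Tr H_t\le C$ of \eqref{1stcase1} holds with $C$ \emph{independent of} $t$. Following the limiting procedure described just before \eqref{1stcase6} (the endomorphisms $H_t=\exp(\eta_t)$ converging to $H_\infty=\exp(\eta_\infty)$ on compact subsets of $\wh X\setminus D$, with $\eta_\infty=\log H_\infty$ the limit appearing in \eqref{1stcase6}), passing to the limit $t\to 0$ gives $\Tr(H_\infty)\le C$ on $\wh X\setminus D$, which is the first inequality of \eqref{ross82}. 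It is worth stressing that the uniform-in-$t$ bound is genuinely used here: the estimate $|\eta_\infty|\le C+\log\frac1{|s_D|^2}$ from \eqref{1stcase6} by itself would only give $\Tr H_\infty\le r\,e^{C}|s_D|^{-2}$.

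For the lower bound in \eqref{ross82} I would argue at the level of eigenvalues. The endomorphism $\eta_\infty$ is Hermitian and satisfies $|\eta_\infty|\le C+\log\frac1{|s_D|^2}$ on $\wh X\setminus D$, so each eigenvalue $\mu$ of $\eta_\infty$ obeys $\mu\ge -C+\log|s_D|^2$; exponentiating, the corresponding eigenvalue of $H_\infty=\exp(\eta_\infty)$ is $\ge e^{-C}|s_D|^2$. Hence $H_\infty\ge e^{-C}|s_D|^2\,\Id_E$, i.e.\ \eqref{ross82} holds with $N=2$ (and then with every larger $N$, once the metric on $\mathcal O(D)$ is normalized so that $|s_D|\le 1$).

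Finally, for \eqref{ross83} I would combine the upper bound just obtained with elementary compactness. Since $\Tr(H_\infty)\le C$ and $H_\infty$ is positive Hermitian, $H_\infty\le C\,\Id_E$; therefore, for a local section $\tau\in H^0(V,\cF|_V)$, one has
\[ |\tau|^2_{h_\cF}\circ\pi=\langle H_\infty\,\pi^\star\tau,\,\pi^\star\tau\rangle_{h_E}\le C\,|\pi^\star\tau|^2_{h_E}\qquad\text{on }\pi^{-1}(V)\setminus D, \]
where $\pi^\star\tau$ denotes the image of the pull-back of $\tau$ in the locally free sheaf $E=\pi^\star\cF/\Tor$, a genuine holomorphic section of the vector bundle $E$ over the whole of $\pi^{-1}(V)$. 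As $h_E$ is a smooth metric on $E$ (Lemma~\ref{lowerbound}) and $\pi^{-1}(\overline{V'})\Subset\pi^{-1}(V)$ is compact, the function $|\pi^\star\tau|^2_{h_E}$ is bounded on $\pi^{-1}(V')$; restricting to $V_0'=V'\cap X_0$, over which $\pi$ is biholomorphic, yields \eqref{ross83}. I do not expect a real obstacle here: the only slightly delicate point is the legitimacy of the limiting argument for $\Tr H_t$, which is precisely what the mean-value estimate behind Theorem~\ref{Est, I} supplies, together with the harmless remark that $\pi^\star\tau$ extends across $D$ as a section of the locally free sheaf $E$, so that its boundedness reduces to ordinary compactness rather than to the fine behavior of $h_\cF$ near $Z$.
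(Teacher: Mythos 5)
Your argument is correct and follows essentially the same route as the paper's (very short) proof: the estimates \eqref{ross82} are exactly the content of Theorem \ref{Est, I} carried to the limit $t\to 0$ (the lower bound coming from exponentiating the eigenvalue bound on $\eta_\infty$), and \eqref{ross83} follows from the uniform trace bound together with the boundedness of the reference norm of a holomorphic section. The only cosmetic difference is that you verify this last point upstairs on $\wh X$, using the smooth metric $h_E$ and compactness of $\pi^{-1}(\overline{V'})$, whereas the paper quotes Remark \ref{comparison} (boundedness of $|\tau|_{h_{\cF, 0}}$ on $V'_0$ from the quotient-metric description); by Remark \ref{comparison, 2} the two are interchangeable.
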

\begin{proof}
The inequality \eqref{ross82} is part of Theorem \ref{HE}. Combined with Remark \ref{comparison}, this clearly gives \eqref{ross83}.
\end{proof}
\medskip

\section{Applications: a few intermediate results}\label{InterI}

\smallskip 

\noindent As application of Theorem \ref{HE} we have the next statement, showing that the lack of precision induced by the 
presence of $\phi_Z$ in Theorem \ref{HE} is immaterial in the context of orbifold singularities. In the next statement we assume that 
$B\subset \CC^n$ is the unit ball and $G$ is a finite group acting on $B$ holomorphically. We assume that the action is free in the complement of a set $W$ of codimension at least two and let $V:= B/G$ be the quotient.

\noindent Assume the the restriction $\cF|_V$ is a $\Q$-vector bundle, so that there exists a vector bundle $\sE\to B$ compatible with the action of $G$ and whose $G$-invariant sections correspond to the sections of $\cF|_V$. Now, since 
$$\sE= \big(\pi^\star \cF|_V\big)^{\star\star}$$
where $p:B\to V$ is our ramified cover, the pull-back of the HE metric $h_\cF$ defines a metric on $\sE|_{B\setminus W}$.
More explicitly, let $V_0\subset V\cap X_{\rm reg}$ be the open subset such that $\cF|_{V_0}$ is a vector bundle. Then we have 
\begin{equation}\label{ross200}\sE|_{B\setminus W}= p^\star (\cF|_{V_0})
\end{equation}
so that $\sE$ can be seen as extension of the vector bundle $\displaystyle p^\star (\cF|_{V_0})$ across $W$. 

\noindent Therefore the metric $h_{\sE}$ compares with a fixed, arbitrary Hermitian metric $h_0$ on $\sE$ as follows
\begin{equation}\label{ross201}
C^{-1}\exp(N\phi_Z\circ \pi)h_0\leq h_{\sE}\leq Ch_0
\end{equation}
since this is the case for the pull-back of $h_{\cF, 0}$ (by the definition of this metric), and then we use Theorem \ref{HE}. One can argue as follows. In the first place we assume that $V$ is sufficiently small, so that the restriction $\cF|_V$ is given by the exact sequence
\[\mathcal O_V^p\to\mathcal O_V^q\to \cF|_V\to 0.\]
Given the equality \eqref{ross200}, the pull-back of this sequence to $U$ gives
\begin{equation}\label{ross500}\mathcal O_B^p\to\mathcal O_B^q\to \sE|_B\to 0\end{equation} 
which is moreover exact when restricted to the complement $B\setminus W$ of $W$ in $B$. The pull-back of $h_{\cF, 0}$ on
$\displaystyle \sE|_{B\setminus W}$
is the quotient metric induced by \eqref{ross500} above (as explained in Section 1). The important point is that 
$\codim_B (W)\geq 2$ is at least two, and therefore the holomorphic maps defining the morphisms in \eqref{ross500} locally near a 
point of $W$ extend. Then \eqref{ross201} follows.  
\medskip

\noindent Moreover, we denote by 
\begin{equation}\label{ross110}\omega_B:= \pi^\star\omega_X
\end{equation}
the inverse image of the metric $\omega_X$ of $X$.
\medskip

\noindent In this context we have the following result.
\begin{thm}\label{orbi, I} Let $\tau$ be any holomorphic section of $\sE$. Then the equality 
 \begin{equation}\label{ross111} \Delta''_{\omega_B}|\tau|_{h_\sE}^2= |D'\tau|^2+ C|\tau|^2_{h_\sE}
\end{equation}
holds pointwise on $B\setminus W$ and in the sense of distributions on $B$, where $C$ is a constant independent of $\tau$.
\end{thm}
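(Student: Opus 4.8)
The plan is to prove \eqref{ross111} first on $B\setminus W$, where every object is smooth, by the standard Bochner--Weitzenböck computation for a holomorphic section of a Hermite--Einstein bundle, and then to show that the identity survives across $W$ in the distributional sense, using that $|\tau|^2_{h_\sE}$ is globally bounded and that $\codim_B W\ge 2$. For the pointwise part: over $B\setminus W$ the bundle $\sE$ is the pull-back $p^\star(\cF|_{V_0})$ by \eqref{ross200}, and $h_\sE=p^\star h_\cF$; since $p$ is a local biholomorphism there and $\omega_B=p^\star\omega_X$, Theorem~\ref{HE}(i) implies that $h_\sE$ is smooth and satisfies the Hermite--Einstein equation $\sqrt{-1}\,\Lambda_{\omega_B}\Theta(\sE,h_\sE)=\gamma\,\Id_\sE$ on $B\setminus W$, where $\gamma$ is the Hermite--Einstein constant of $\cF$ relative to $\omega_X$ --- a real number depending only on $\cF$ and $\omega_X$, not on $\tau$. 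Differentiating $|\tau|^2=\langle\tau,\tau\rangle_{h_\sE}$ twice and using $\dbar\tau=0$ together with $\dbar D'\tau=\Theta(\sE,h_\sE)\tau$ gives the Weitzenböck identity
$$\Delta''_{\omega_B}|\tau|^2_{h_\sE}=|D'\tau|^2-\big\langle\sqrt{-1}\,\Lambda_{\omega_B}\Theta(\sE,h_\sE)\,\tau,\tau\big\rangle_{h_\sE},$$
and substituting the Hermite--Einstein equation turns this into \eqref{ross111} on $B\setminus W$, with $C=-\gamma$ (up to the sign convention for $\Delta''$) independent of $\tau$.

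For the distributional part, set $u:=|\tau|^2_{h_\sE}$. By the upper estimate in \eqref{ross201}, $u\le C\,|\tau|^2_{h_0}$ for a smooth reference metric $h_0$, so $u\in L^\infty_{loc}(B)\subset L^1_{loc}(B)$, and by the pointwise identity just proved the right-hand side of \eqref{ross111} is bounded below by $-|C|\sup_{B'}u$ on $B'\setminus W$ for every $B'\Subset B$. Hence $u$ plus a suitable smooth function is $\omega_B$-subharmonic and bounded on $B'\setminus W$. Since $W$ is an analytic subset of $B$ of codimension at least two, it is polar, so a bounded subharmonic function on $B'\setminus W$ extends across $W$; the extension agrees with the original almost everywhere, whence $u$ coincides a.e.\ on $B'$ with the difference of a bounded subharmonic function and a smooth function, and $\Delta''_{\omega_B}u$ is therefore a signed Radon measure on $B$.

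It remains to see that this measure does not charge $W$. The positive part of $\Delta''_{\omega_B}u$ is, up to a smooth density, the Riesz measure of the bounded subharmonic extension; being the Riesz measure of a locally bounded subharmonic function it has locally bounded potential, and therefore finite energy, and therefore puts no mass on the polar set $W$. Consequently $\Delta''_{\omega_B}u$ does not charge $W$; neither does $C\,u\,dV_{\omega_B}$, which is absolutely continuous; hence the measure $\Delta''_{\omega_B}u-C\,u\,dV_{\omega_B}$ does not charge $W$, and since it has density $|D'\tau|^2_{h_\sE}$ off the null set $W$ it follows that $|D'\tau|^2_{h_\sE}\in L^1_{loc}(B)$ and that \eqref{ross111} holds as an equality of distributions on all of $B$. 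I expect this last step --- ruling out a singular term of $\Delta''_{\omega_B}u$ supported on $W$ --- to be the only delicate point; it is precisely here that the global $L^\infty$-bound \eqref{ross201} on $|\tau|^2_{h_\sE}$ and the hypothesis $\codim_B W\ge 2$ (yielding polarity, hence removability, of $W$) are used in an essential way.
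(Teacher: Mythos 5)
Your pointwise computation on $B\setminus W$ is exactly the paper's (Poincaré--Lelong/Weitzenböck identity plus the Hermite--Einstein equation), so that half is fine. The gap is in the passage across $W$. Recall that ``in the sense of distributions'' means here the integration-by-parts identity
\[
n\int_B|\tau|^2_{h_\sE}\,\sqrt{-1}\ddbar\theta\wedge\omega_B^{n-1}=\int_B\theta\big(|D'\tau|^2+C|\tau|^2_{h_\sE}\big)\,\omega_B^n ,
\]
and that $\omega_B=p^\star\omega_X$ is only a \emph{semipositive} smooth form on $B$: it is a Kähler metric on $B\setminus W$ but degenerates along $W$, where the differential of $B\to V\hookrightarrow\CC^N$ drops rank (and $\omega_B^n$ vanishes there as well). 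Your argument upgrades the pointwise inequality $\Delta''_{\omega_B}u\ge-a$ to the statement that $u$ plus a smooth function is a \emph{bounded subharmonic} function on $B'\setminus W$, and then invokes classical potential theory (removability of polar sets, Riesz measures with bounded potential not charging polar sets). But $\Delta''_{\omega_B}u\ge-a$ is only a lower bound on the trace of $\sqrt{-1}\ddbar u$ against the \emph{degenerate} form $\omega_B$; it does not give $\sqrt{-1}\ddbar u\ge-C\omega_{\rm euc}$, i.e.\ it does not make $u$ quasi-psh or Euclidean-subharmonic modulo a smooth function. Obtaining that would require a pointwise upper bound on the full curvature term $\sqrt{-1}\langle\Theta(\sE,h_\sE)\tau,\tau\rangle$ (a Griffiths-type bound), whereas the Hermite--Einstein equation controls only its $\omega_B$-trace, and no such bound is available for the metric of Theorem~\ref{HE}. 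Consequently neither the extension of $u$ across $W$ nor the claim that its ``Riesz measure'' does not charge $W$ is justified; indeed $\sqrt{-1}\ddbar u\wedge\omega_B^{n-1}$ is not even known a priori to be a Radon measure near $W$, since Bedford--Taylor theory requires (quasi-)plurisubharmonicity, not boundedness.

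This degeneration of $\omega_B$ along $W$ is precisely the delicate point you flagged, and the paper resolves it by explicit computation rather than soft potential theory: it introduces cutoffs $\Xi_\ep=\rho_\ep\big(\log\log(1/\sum|f_i|^2)\big)$, reduces the error terms to integrals of the type \eqref{conc9}, and shows they tend to zero using the key geometric estimate that, after blowing up the ideal of $W$, the form $\sqrt{-1}\,dw^1\wedge d\ol w^1\wedge p^\star\omega_B^{n-1}$ is $\O(|w^1|)$ along each exceptional divisor (equivalently $\int_{D_i}p^\star\omega_B^{n-1}=0$). The only analytic input is the bound $\sup_{B\setminus W}|\tau|^2_{h_\sE}\le C(\tau)$ from \eqref{ross201}, which you also use. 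To salvage your route you would have to control the capacity of $W$ with respect to the degenerate metric $\omega_B$, which in practice amounts to re-deriving exactly these cutoff estimates.
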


\noindent Before presenting the proof, a few words about this statement: by \emph{in the sense of distributions} we mean that for any smooth function 
$\theta$ with compact support on $B$ the equality
\[n\int_B|\tau|_{h_\sE}^2\sqrt{-1}\ddbar \theta\wedge \omega_B^{n-1}= \int_B\theta(|D'\tau|^2+ C|\tau|^2_{h_\sE})\omega_B^n\]
holds true.

\begin{proof}
\noindent By the usual Poincaré-Lelong formula we have
\begin{equation}\label{conc3}
\Delta''_{\omega_B}|\tau|_{h_\sE}^2=  |D'\tau|^2 +\langle\Lambda_{\omega_B}\sqrt{-1}\Theta(\sE, h_\sE)\tau, \tau\rangle
\end{equation}
at each point of $B\setminus W$. 

\noindent Thanks to the HE condition we get
\begin{equation}\label{conc4}
\langle\Lambda_{\omega_B}\sqrt{-1}\Theta(\sE, h_\sE)\tau, \tau\rangle= C|\tau|^2_{h_\sE}
\end{equation}
\noindent for some constant $C$, so the first part of Theorem \ref{orbi, I} is proved; we will show next that the inequality \eqref{conc3} holds true on $B$ in the sense of distributions.
\smallskip

\noindent We assume that $W= (f_1=0,\dots, f_r= 0)$ where $f_i$ are
holomorphic. Let $p: Y\to B$ be a composition of blow-up maps with smooth centre, such that the inverse image of the ideal $(f_1,\dots, f_r)$ is principal.

\noindent Let $(D_i)_i$ be the set of exceptional divisors of the map $p$; they are assumed to have transverse intersections at each point. Let
$(w^1=0)$ be the local equation of some of the $D_i$. Then we claim that the
$(n,n)$--form \begin{equation}\label{conc6}\sqrt{-1}dw^1\wedge d\ol w^{1}\wedge p^\star\omega^{n-1}_B\end{equation}
equals $\O(|w^1|)$.

\noindent If $p$ is very simple, e.g consists in one single blow-up, then the local equation of this map reads as
\begin{equation}\label{conc5}
w\to (w^1, w^1w^2, \dots,w^1w^k, w^{k+1},\dots, w^n)
\end{equation}
for a certain $2\leq k\leq n$. Then we see that the form in \eqref{conc6} is divisible by $|w^1|^{2k-2}$.

\noindent The general case follows from a less precise --but more direct-- argument: we observe that we have 
\begin{equation}\label{part1}
\int_{D_i}p^\star\omega^{n-1}_B= 0
\end{equation}
and on the other hand the wedge product in \eqref{conc6} is smooth, hence it must be equal to $\O(|w^1|)$.

\smallskip

\noindent Let $\phi$ be a positive test function on $B$ and $\Xi_\ep$ be the
usual truncation function, that is to say $\displaystyle \rho_\ep\Big(\log\log \frac{1}{\sum|f_i|^2}\Big)$. We have to show that
\begin{equation}\label{conc7}
\int_B|\tau|^2_{h_\sE}\phi\Delta''(\Xi_\ep)\omega_B^{n}\to 0, \qquad \int_B|\tau|^2_{h_\sE}\langle d\phi, d\Xi_\ep\rangle\omega_B^{n}\to 0
\end{equation}
as $\ep\to 0$. 

\noindent To this end, we recall that the estimate
\begin{equation}\label{conc8}
\sup_{B\setminus W}|\tau|^2_{h_\sE}\leq C(\tau)
\end{equation}
holds by \eqref{ross201}. The terms involving derivatives of the truncation function $\Xi_\ep$ can be bounded by integrals of the following type
\begin{equation}\label{conc9}
 \int_B\phi\rho_\ep'\Big(\log\log \frac{1}{\sum|f_i|^2}\Big)\frac{\sqrt{-1}\partial f_i\wedge \ol{\partial f_i}}{\sum |f_i|^2}\wedge {\omega_U^{n-1}}
\end{equation}
as consequence of the inequality \eqref{conc8}. But \eqref{conc9} converges to zero as $\ep\to 0$, as we see by blowing up and using the claim \eqref{conc6}. 
\end{proof}

\medskip

\noindent The same method of proof gives the following result.
\begin{thm}\label{orbi, II} Let $\mu$ be any holomorphic section of $\sE^\star$. Then we have 
 \begin{equation}\label{ross202} \Delta''_{\omega_B}\log|\mu|_{h_\sE^\star}^2\geq -C
\end{equation}
holds pointwise on $B\setminus W$ and in the sense of distributions on $B$, where $C> 0$ is a constant independent of $\mu$.
\end{thm}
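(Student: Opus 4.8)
The proof follows the pattern of Theorem \ref{orbi, I}: one first checks the inequality pointwise on $B\setminus W$, and then extends it across $W$ in the distributional sense by a logarithmic cut-off argument.

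\emph{Pointwise on $B\setminus W$.} Since $G$ acts freely on $B\setminus W$, the cover $p$ is étale there, so $\omega_B=p^\star\omega_X$ is a genuine Kähler metric and $h_{\sE}=p^\star h_{\cF}$ is Hermite--Einstein with respect to $\omega_B$ by Theorem \ref{HE}\,(i). Passing to the dual, $(\sE^\star,h_{\sE^\star})$ is Hermite--Einstein with the opposite Einstein factor, i.e.\ $\Lambda_{\omega_B}\sqrt{-1}\Theta(\sE^\star,h_{\sE^\star})=-c\,\Id_{\sE^\star}$ for a constant $c$. For a holomorphic section $\mu$ the Poincaré--Lelong formula gives, at points where $\mu\neq 0$,
\[
\Delta''_{\omega_B}\log|\mu|^2_{h_{\sE^\star}}=\frac{|D'\mu|^2}{|\mu|^2}-\frac{|\langle D'\mu,\mu\rangle|^2}{|\mu|^4}-\frac{\langle\Lambda_{\omega_B}\sqrt{-1}\Theta(\sE^\star,h_{\sE^\star})\mu,\mu\rangle}{|\mu|^2},
\]
where the first two terms add up to a non-negative quantity by Cauchy--Schwarz while the third equals $c$; along $\{\mu=0\}$ the left-hand side acquires an additional non-negative Lelong contribution. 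Hence $\Delta''_{\omega_B}\log|\mu|^2_{h_{\sE^\star}}\geq -C$ on $B\setminus W$ with $C:=|c|$, a constant independent of $\mu$.

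\emph{Distributions on $B$.} By the dual of the estimate \eqref{ross201}, on every compact subset of $B$ one has
\[
\log|\mu|^2_{h_0^\star}-C\ \leq\ \log|\mu|^2_{h_{\sE^\star}}\ \leq\ C(\mu)-N\phi_Z,
\]
and both bounds lie in $L^1$ (the lower one because $\mu$ is holomorphic, the upper one because $-\phi_Z$ has at worst logarithmic poles), so $g:=\log|\mu|^2_{h_{\sE^\star}}\in L^1_{\mathrm{loc}}(B)$ and $\sqrt{-1}\ddbar g\wedge\omega_B^{n-1}$ is a well-defined current on $B$. Writing $W=(f_1=\dots=f_r=0)$ and using, exactly as in the proof of Theorem \ref{orbi, I}, the double-logarithmic truncation $\Xi_\ep=\rho_\ep\!\big(\log\log\tfrac{1}{\sum|f_i|^2}\big)$, one tests the desired inequality against a positive function $\theta$: the function $\theta(1-\Xi_\ep)$ has compact support in $B\setminus W$, so the pointwise bound just proved applies to it, and after integration by parts the claim reduces to
\[
\int_B g\,\theta\,\Delta''_{\omega_B}\Xi_\ep\,\omega_B^n\longrightarrow 0,\qquad \int_B g\,\langle d\theta,d\Xi_\ep\rangle\,\omega_B^n\longrightarrow 0\qquad(\ep\to 0).
\]
Here $g$ is no longer bounded, but it grows at most like the single logarithm $-N\phi_Z$, and differentiating the double logarithm in $\Xi_\ep$ produces an extra factor $\big(\log\tfrac{1}{\sum|f_i|^2}\big)^{-1}$ that exactly absorbs this growth; after a log-resolution $Y\to B$ of the ideal $(f_1,\dots,f_r)$, each of the two integrals is dominated by one of the form $\int_Y\big(\log\tfrac{1}{|w|^2}\big)^{-1}\tfrac{\sqrt{-1}dw\wedge d\bar w}{|w|^2}\wedge p^\star\omega_B^{n-1}$ over a shrinking neighbourhood of the exceptional divisors, which is finite by the estimate \eqref{conc6} and hence tends to $0$. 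This yields $\sqrt{-1}\ddbar g\wedge\omega_B^{n-1}+C\,\omega_B^n\geq 0$ on $B$, i.e.\ the asserted inequality in the sense of distributions.

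The delicate point, and the only genuine difference with Theorem \ref{orbi, I}, is the control of the two boundary integrals in the last display: since $\log|\mu|^2_{h_{\sE^\star}}$ is unbounded near $W$ (unlike $|\tau|^2_{h_\sE}$ there), one must make sure that the double-logarithmic cut-off -- in tandem with the vanishing estimate \eqref{conc6} for $\omega_B^{n-1}$ along the exceptional divisors -- still kills them; the term containing $\rho_\ep''\,|d\log\log\tfrac{1}{\sum|f_i|^2}|^2$ requires the most care, but the one surplus power of $\log\tfrac{1}{\sum|f_i|^2}$ renders the relevant integrand locally integrable, hence negligible in the limit.
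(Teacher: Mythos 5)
Your proposal is correct and follows exactly the route the paper takes: the pointwise bound comes from the Hermite--Einstein condition on the dual via Poincaré--Lelong, and the distributional extension is the truncation argument of Theorem \ref{orbi, I}, using that $\big|\log|\mu|^2_{h_\sE^\star}\big|\leq C(\mu)\log\frac{1}{\sum|f_i|^2}$ (the paper's \eqref{ross203}) is mild enough for the integrals \eqref{conc7} to vanish in the limit. You merely spell out in more detail the cancellation between the single-log growth of $\log|\mu|^2$ and the extra $\big(\log\frac{1}{\sum|f_i|^2}\big)^{-1}$ produced by differentiating the double-logarithmic cut-off, which the paper leaves implicit.
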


\begin{proof}
Indeed, given the inequality \eqref{ross201}, we have 
\begin{equation}\label{ross203}
\sup_{B\setminus W}\big|\log|\mu|^2_{h_\sE^\star}\big|\leq C(\mu)\log\frac{1}{\sum |f_i|^2}
\end{equation}
and this is mild enough in order to have the integrals \eqref{conc7} converging to zero.
\end{proof}
\medskip

\noindent As consequence of the previous Theorem \ref{orbi, I} and Theorem \ref{orbi, II} we have the following statement.
\begin{cor}\label{order0}
There exists a positive constant $C>0$ such that the inequalities
\begin{equation}\label{ross204}
C^{-1}h_0\leq h_{\sE}\leq Ch_0
\end{equation}
hold on $B\setminus W$.
\end{cor}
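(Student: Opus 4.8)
The plan is to upgrade the one-sided bounds already available in \eqref{ross201} to a genuine two-sided comparison $C^{-1} h_0 \leq h_\sE \leq C h_0$ on $B \setminus W$, by using the two subharmonicity-type estimates of Theorems \ref{orbi, I} and \ref{orbi, II} together with a sub-mean-value argument on the ball. Recall that \eqref{ross201} already gives the upper bound $h_\sE \leq C h_0$; the content of the corollary is the lower bound, equivalently the upper bound $h_\sE^\star \leq C h_0^\star$ on the dual bundle. Fixing a local holomorphic frame $\mu_1, \dots, \mu_q$ of $\sE^\star$ on a slightly smaller ball $B' \Subset B$ (which exists since $\codim_B W \geq 2$, so $\sE^\star$ extends across $W$ and is locally free there), it suffices to bound each $|\mu_j|^2_{h_\sE^\star}$ from above by a constant on $B' \setminus W$.

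First I would apply Theorem \ref{orbi, II} to each $\mu_j$: the function $v_j := \log |\mu_j|^2_{h_\sE^\star}$ satisfies $\Delta''_{\omega_B} v_j \geq -C$ on $B' \setminus W$ and in the sense of distributions on $B'$. Since $\omega_B = \pi^\star \omega_X$ is quasi-isometric to the Euclidean metric on the relatively compact ball $B'$, this means $v_j + C' |z|^2$ is (equivalent to) a plurisubharmonic — or at least subharmonic — function on all of $B'$, for a suitable constant $C'$. Next I would control the $L^1$-norm of $v_j$ on $B'$: from the lower bound in \eqref{ross203}, $v_j \geq -C(\mu_j) \log \frac{1}{\sum |f_i|^2}$, and the right-hand side is integrable on $B'$ (it has only logarithmic growth along the analytic set $W$). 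From the upper bound \eqref{ross201} applied to the dual metric, $v_j$ is bounded above. Hence $\|v_j\|_{L^1(B')} \leq C$ uniformly. Now the sub-mean-value inequality for the subharmonic function $v_j + C'|z|^2$ on any ball $B(x, r) \Subset B'$ gives a pointwise upper bound for $v_j(x)$ in terms of its $L^1$-average, hence $\sup_{B'' } v_j \leq C$ on a slightly smaller ball $B'' \Subset B'$. Covering $B \setminus W$ by finitely many such smaller balls (using that everything is local and $W$ is the bad locus) yields $|\mu_j|^2_{h_\sE^\star} \leq C$ throughout, i.e. $h_\sE^\star \leq C h_0^\star$, which is exactly the desired lower bound $h_\sE \geq C^{-1} h_0$.

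The main obstacle I expect is not the sub-mean-value step itself, which is classical, but rather making rigorous the passage from the distributional inequality on all of $B'$ (including across $W$) to the statement that $v_j + C'|z|^2$ genuinely is a subharmonic — not merely locally integrable — function whose value at points of $B' \setminus W$ is dominated by solid averages. This requires knowing that $v_j$ does not have a $-\infty$ component of mass concentrated on $W$, i.e. that the distributional Laplacian of $v_j$ has no negative singular part along $W$; this is where one genuinely uses that Theorem \ref{orbi, II} was proved \emph{in the sense of distributions on $B$} and not merely pointwise on $B \setminus W$, and that the logarithmic bound \eqref{ross203} is mild enough to exclude such concentration (a pluripolar set of codimension $\geq 2$ carries no mass for a current of bidegree $(1,1)$ with $L^1_{\mathrm{loc}}$ potential of only logarithmic growth). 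Once that subtlety is dispatched, the rest is a routine local argument, and combining the resulting lower bound with the already-established upper bound \eqref{ross201} gives \eqref{ross204}.
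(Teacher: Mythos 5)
Your reduction (upper bound from \eqref{ross201}, lower bound equivalent to a uniform sup bound for $|\mu|^2_{h_\sE^\star}$ for holomorphic sections $\mu$ of $\sE^\star$) is the same as the paper's, but the step you use to get that sup bound contains a genuine gap. You claim that $\omega_B=p^\star\omega_X$ is quasi-isometric to the Euclidean metric on $B'$, so that $\Delta''_{\omega_B}v_j\geq -C$ upgrades $v_j+C'|z|^2$ to a (Euclidean) subharmonic function, to which the classical sub-mean-value inequality applies. This fails for two reasons. First, $\omega_B$ is \emph{not} quasi-isometric to $\omega_{\rm euc}$ near $W$: it is the pull-back of $\omega_X$ under the uniformizing cover $p\colon B\to V=B/G$, and the differentials of $G$-invariant functions vanish along the directions moved by the stabilizers, so $\omega_B$ is only semipositive and degenerates along $W$. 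This degeneration is precisely the difficulty the paper is organized around --- the introduction states that no natural metric on $X$ has pull-backs quasi-isometric to the flat metric, which is why $\omega_{\rm orb}$ is constructed in Section~6. Second, even on a region where one had a two-sided comparison of metrics, a lower bound on the trace $\Lambda_{\omega_B}\sqrt{-1}\partial\bar\partial v_j$ does not give a lower bound on the Euclidean Laplacian: traces with respect to quasi-isometric metrics are only comparable when the full complex Hessian has a one-sided bound, and here the only available information is the $\omega_B$-trace, since the Hermite--Einstein equation controls $\Lambda_{\omega_B}\Theta(\sE,h_\sE)$ and not the curvature form itself. So the subharmonicity of $v_j+C'|z|^2$ on $B'$ is not justified, and with it the whole mean-value step collapses.

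The paper's route avoids exactly this: it keeps the inequality of Theorem \ref{orbi, II} with respect to $\omega_B$ itself (valid distributionally across $W$) and runs a Moser iteration as in \cite{CP22}, the essential input being a uniform Sobolev inequality for $p^\star\omega_X$, available because $\omega_X$ comes from an embedding into $\CC^N$ (Michael--Simon); Remark \ref{orb} makes this Sobolev hypothesis explicit. If you want to salvage your argument you must either supply such a Sobolev/mean-value inequality for the degenerate metric $\omega_B$, or prove a pointwise lower bound on the full form $\sqrt{-1}\partial\bar\partial v_j$ in terms of $\omega_{\rm euc}$, which the HE condition does not provide. A further small slip: dualizing \eqref{ross201} gives $v_j$ bounded \emph{below} and only bounded above by $C(\mu)\log\frac{1}{\sum|f_i|^2}$ (a uniform upper bound for $v_j$ is exactly what is to be proved), although this does not affect your $L^1$ estimate, since the logarithmic bound is integrable.
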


\begin{proof}
The Laplace inequality established in Theorem \ref{orbi, II} combines with Moser iteration procedure as in \cite{CP22} and shows that we have
\begin{equation}\label{ross205}
\sup_{B\setminus W}|\mu|^2_{h_\sE^\star}\leq C(\mu),
\end{equation} 
and the statement follows.
\end{proof}
\medskip

\begin{remark}\label{orb} Corollary \ref{order0} applies for any metric $h_\sE$ which is HE with respect to a Kähler metric $\omega$ on $X$, 
provided that $p^\star\omega$ verifies Sobolev inequality.
\end{remark}

\noindent Moreover, we can analyse the metric $h_\sE$ a bit further, as follows. 
\begin{cor}\label{order1}
We assume that the hypothesis in Theorem \ref{orbi, I} are fulfilled. Let $C> 0$ be a positive constant, and consider the class of smooth positive functions $\psi$ such that the inequality
$$\int_B(\psi+ |\Delta''_{\omega_B} \psi|)\omega_B^{n}\leq C$$
holds. Then we have $\displaystyle \int_B\psi |D'\tau|_{h_\sE}^2\omega_B^{n}\leq C_0C\sup_{B\setminus W}|\tau|^2$, where $C_0$ is a constant only depending on $B$. It follows that there exists a positive $\ep_0> 0$ such that 
\begin{equation}
\int_{B\setminus W}\frac{\sum_{\alpha, \beta}|A^\alpha_\beta|^2}{(\sum|f_i|^2)^{\ep_0}}\omega_B^{n}< \infty,
\end{equation}
where $(A^\alpha_\beta)$ are the coefficients of the Chern connection of $h_\sE$ with respect to the basis provided by a fixed trivialisation of $E$.
\end{cor}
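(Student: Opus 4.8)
The plan is to integrate the pointwise identity from Theorem \ref{orbi, I} against the weight $\psi$. Specifically, multiplying
\eqref{ross111} by $\psi$ and integrating over $B$ using the distributional formulation, we get
\[
\int_B \psi\, |D'\tau|^2_{h_\sE}\,\omega_B^n + C\int_B \psi\, |\tau|^2_{h_\sE}\,\omega_B^n
= n\int_B |\tau|^2_{h_\sE}\,\psi\, \sqrt{-1}\ddbar\, 1\wedge\omega_B^{n-1} + (\text{terms from } \Delta''_{\omega_B}\psi),
\]
so that after reorganizing, the key quantity $\int_B \psi\,|D'\tau|^2_{h_\sE}\,\omega_B^n$ is controlled by
$\int_B |\Delta''_{\omega_B}\psi|\,|\tau|^2_{h_\sE}\,\omega_B^n$ plus $\int_B \psi\,|\tau|^2_{h_\sE}\,\omega_B^n$. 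Here is where Corollary \ref{order0} (or directly the bound \eqref{conc8}) enters: since $|\tau|^2_{h_\sE}\le C(\tau) \le C_0\sup_{B\setminus W}|\tau|^2$ pointwise on $B\setminus W$, and $W$ has measure zero, we can pull this sup out of both integrals. The hypothesis $\int_B(\psi + |\Delta''_{\omega_B}\psi|)\omega_B^n\le C$ then yields $\int_B \psi\,|D'\tau|^2_{h_\sE}\,\omega_B^n \le C_0\, C\,\sup_{B\setminus W}|\tau|^2$, which is the first assertion.

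For the second assertion, I would specialize $\psi$ to a weight of the form $\psi_{\ep_0} := (\sum |f_i|^2)^{-\ep_0}$, suitably regularized near $W$ and cut off near $\partial B$. The point is that for $\ep_0>0$ small enough this $\psi_{\ep_0}$ satisfies the integral hypothesis of the corollary: indeed $(\sum|f_i|^2)^{-\ep_0}\in L^1(B,\omega_B^n)$ for $\ep_0$ small (this is the standard fact that negative powers of an ideal's defining function are locally integrable up to the log-canonical threshold, which is positive), and a direct computation of $\Delta''_{\omega_B}(\sum|f_i|^2)^{-\ep_0}$ shows its absolute value is again dominated by a constant times $(\sum|f_i|^2)^{-\ep_0'}$ for some $\ep_0' $, hence integrable after possibly shrinking $\ep_0$. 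Applying the first part of the corollary to a section $\tau$ coming from a fixed frame of $E$, and noting that $|D'\tau|^2_{h_\sE}$ computed in that frame is (up to bounded factors coming from \eqref{ross201}) comparable to $\sum_{\alpha,\beta}|A^\alpha_\beta|^2$, gives the claimed finiteness of $\int_{B\setminus W}\frac{\sum_{\alpha,\beta}|A^\alpha_\beta|^2}{(\sum|f_i|^2)^{\ep_0}}\omega_B^n$.

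The main obstacle I anticipate is the justification of the integration by parts in the distributional setting: \eqref{ross111} holds ``in the sense of distributions on $B$'' only when tested against \emph{smooth} functions of compact support, whereas $\psi_{\ep_0}$ is singular along $W$. The remedy is the same truncation argument already used in the proof of Theorem \ref{orbi, I}: replace $\psi$ by $\psi\,\Xi_\ep$ with $\Xi_\ep = \rho_\ep(\log\log\frac{1}{\sum|f_i|^2})$, carry out the integration by parts legitimately, and then let $\ep\to 0$. The cross-terms $\int_B|\tau|^2_{h_\sE}\,\psi\,\langle d\Xi_\ep, \cdot\rangle$ and $\int_B|\tau|^2_{h_\sE}\,\psi\,\Delta''\Xi_\ep\,\omega_B^n$ vanish in the limit by exactly the blow-up computation \eqref{conc6}--\eqref{conc9}, provided $\psi = \psi_{\ep_0}$ with $\ep_0$ chosen small enough that $\psi_{\ep_0}$ times those error integrands is still integrable — which imposes the smallness of $\ep_0$ in the statement. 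Once this limiting procedure is in place, the rest is the elementary manipulation described above.
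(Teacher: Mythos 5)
Your proposal is correct and follows essentially the same route as the paper: multiply the identity of Theorem \ref{orbi, I} by $\psi$, integrate (by parts) using the distributional statement together with the uniform bound on $|\tau|_{h_\sE}$, and then specialize $\psi$ to a small negative power of $\sum|f_i|^2$, exactly as the paper does in its proof and the remark that follows it. The extra details you supply (the truncation $\Xi_\ep$ to legitimize testing against the singular weight, and the frame argument converting $|D'\tau|^2_{h_\sE}$ into $\sum_{\alpha,\beta}|A^\alpha_\beta|^2$ via the two-sided comparison of Corollary \ref{order0}) are the same ingredients the paper uses implicitly, so there is no substantive difference.
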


\begin{proof} Indeed, we use the equality
\begin{equation}\label{coro1}
\Delta''_{\omega_B} |\tau|^2=  |D'\tau|^2- C|\tau|^2
\end{equation}
established in Theorem \ref{orbi, I}. 
We multiply by $\psi$, integrate and the conclusion follows. Note that the exponent $\ep_0$ depends on the ideal generated by the 
functions $(f_i)$.\end{proof}
\medskip

\begin{remark} For example, assume that $B\subset \CC^3$ is the unit ball, and that the singular set $W$ is given by the vanishing of the first two coordinates.
Consider the function $\psi$ given by the expression
\begin{equation}\label{3folds1}
\psi(z):= \frac{1}{(|z_1|^2+ |z_2|^2)^{\ep_0}},
\end{equation}
where $\ep_0> 0$ is a positive constant we want to determine, so that the
differential inequality in Corollary \ref{order1} is satisfied.

\noindent To this end, a quick calculation shows that we have
\begin{equation}\label{3folds2}
|\Delta\psi|\omega_B^{n}\leq \frac{C}{(|z_1|^2+ |z_2|^2)^{1+ \ep_0}}d\lambda  
\end{equation}
where $C$ is a positive constant --actually, this constant in
\eqref{3folds2} can be computed as soon as we fix $C_1>0$ such that
$\omega_B\leq C_1\omega_{\rm euc}$.

\noindent It is therefore clear that we can take any $\ep_0< 1$, so in conclusion we have
\begin{equation}\label{3folds3}
\int_{B\setminus W}\frac{\sum_{\alpha, \beta}|A^\alpha_\beta|^2}{(\sum|z_i|^2)^{2-\delta}}\omega_B^{3}< \infty  
\end{equation}
for any $\delta> 0$. \end{remark}
\medskip

\subsection{Chern classes inequality} In this subsection we prove Theorem \ref{Chern1}. 
\smallskip

\noindent $\bullet$ \emph{Proof of \rm (1)} It is sufficient to show that there exists a positive constant $C> 0$ such that 
we have
\begin{equation}\label{Chern11}
|\Delta(\cF, h_\cF)|_{\omega_X}\leq C|\Theta(\cF, h_\cF)|_{\omega_X, h_\cF}^2
\end{equation}
pointwise on $X_0$. This is done as follows.

\noindent Let $x_0\in X_0$ be an arbitrary point. Then we have coordinates $(z_i)$ centred at $x_0$ which moreover are geodesic for the metric $\omega_{X}$
and a normal frame $e_\alpha$ for $(\cF, h_{\cF})$. We write 
\begin{equation}\label{Chern12} 
\Theta(\cF, h)_{x_0}= \sum \theta^\alpha_{\beta p\ol q}dz^p\wedge dz^{\ol q}\otimes e_\alpha\otimes e_\beta^\star 
\end{equation}
Then we have 
\begin{equation}\label{Chern13} 
|\Theta(\cF, h)|_{x_0}^2dV_{\omega_{X}}= \sum |\theta^\alpha_{\beta p\ol q}|^2d\lambda
\end{equation}
at the point $x_0$. Then the inequality \eqref{Chern11} follows, since the expression of $\Delta(\cF, h_\cF)$ at $x_0$ 
is quadratic with respect to the 
curvature coefficients.  
\smallskip

\noindent $\bullet$ \emph{Proof of \rm (2)} From this point on we assume that $\dim(X)= 3$.
Let $\alpha$ be a smooth $(1, 0)$-form with support in $V$. It would be enough to prove that the following holds
\begin{equation}\label{3folds4}
\lim_{\ep\to 0}\int_{V}\Delta(\cF, h_\cF)\wedge \dbar\chi_\ep\wedge \alpha= 0.
\end{equation}
By pulling back to $B$ via the map $p$, the main assignment in \eqref{3folds4} is the following
\begin{equation}\label{3folds5}
\lim_{\ep\to 0}\int_{B}\Tr\big(\Theta(\sE, h_\sE)\wedge \Theta(\sE, h_\sE)\big)\wedge p^\star(\dbar\chi_\ep\wedge \alpha)=0,
\end{equation}
Locally on $B$ we have the equality $\displaystyle \Theta(\sE, h_\sE)= \dbar A$, where $A= (A^p_q)$ is the matrix of $(1,0)$-forms above,
so \eqref{3folds5} is equivalent to 
\begin{equation}\label{3folds6}
\lim_{\ep\to 0}\int_{B}\Tr\big(A\wedge \Theta(\sE, h_\sE)\big)\wedge p^\star(\dbar\chi_\ep\wedge \dbar \alpha)=0.
\end{equation} 
where we use the notation $h_s$ for the inverse image of the metric $h_\cF|_V$.

\noindent By using Cauchy-Schwarz inequality, it would be sufficient to prove that 
\begin{equation}\label{3folds7}
\lim_{\ep\to 0}\int_{B}|A|^2_{\omega_B}\cdot |p^\star(\dbar\chi_\ep)|^2_{\omega_B}\omega_B^3<\infty.
\end{equation} 
since the $L^2$ norm of the curvature tensor $\Theta(\sE, h_\sE)$ with respect to the inverse image metric $\omega_B$ is finite.
\smallskip

\noindent Let $x_0\in V\subset X_{\rm sing}$ be a point belonging to the singular set of $X$, such that 
the composition of the ramified cover $p:B\to V$ composed with the local embedding of $(X, x_0)$ in $\CC^4$ can be written as follows 
\begin{equation}\label{3folds8}
(z, u, v)\to \big(z, f_1(u, v), f_2(u, v),f_3(u, v)\big)
\end{equation}
where the $f_i$'s are local holomorphic functions. For example, in the case of a singularity of type $A_n$ we have 
$$f_1(u, v)= u^n,\qquad f_2(u, v)= v^n, \qquad f_3(u, v)= uv.$$
\smallskip 

\noindent Consider next the truncation function
\begin{equation}\label{3folds9}
\chi_\ep:= \Xi_\ep\big(\log\log(1/\sum_{i=2}^{4} |Z_i|^2)\big), \qquad \omega_X\simeq \sqrt{-1}\sum_{i=1}^{4}dZ_i\wedge d\ol Z_i
\end{equation}
where we denote by $\simeq$ the fact that the two metrics are quasi-isometric. 

\noindent A rough estimate gives
\begin{equation}\label{3folds10}
|\dbar\chi_\ep|_{\omega_X}\leq \frac{-\Xi'_\ep(\wt Z)}{(|\wt Z|^2)^{1/2}\log(1/|\wt Z|^2}=: \psi_\ep^{1/2}(\wt Z)
\end{equation}
where $\wt Z:= (Z_2, Z_3, Z_4)$.

\noindent We have to evaluate the expression
\begin{equation}\label{3folds11}
\int_B|\Delta_{\omega_B}(\psi_\ep\circ p) |\omega_B^3
\end{equation}
and since the following formula holds
\begin{equation}\label{3folds14}
\Delta_{\omega_B}(\psi_\ep\circ p)= (\Delta_{\omega_X}\psi_\ep)\circ p
\end{equation}
it would be sufficient to show that the family of integrals
\begin{equation}\label{3folds13}
I_\ep:= \int_V\theta(Z) |\Delta_{\omega_X}\psi_\ep|dV_{\omega_X}
\end{equation}
converges to zero as $\ep\to 0$.
\smallskip

\noindent In the expression \eqref{3folds13}, the most singular term is equal to 
\begin{equation}\label{3folds12}
J_\ep:= -\int_{V}\frac{\theta(Z) \Xi'_\ep(\wt Z)}{|\wt Z|^{4}\log^2(1/|\wt Z|^2)}\omega_{\rm euc}^3
\end{equation}
as a quick calculation shows it.

\noindent We assume that $V= (f= 0)\subset (\CC^4, 0)$, that is to say locally near the point $x_0=0$, the 
space $X$ is given by the vanishing of the holomorphic function $f$. Then the next equality holds
\begin{equation}\label{3folds15}
J_\ep= -\int_{(\CC^4, 0)}\frac{\theta(Z) \Xi'_\ep(\wt Z)}{|\wt Z|^{4}\log^2(1/|\wt Z|^2)}\omega_{\rm euc}^3\wedge \ddc\log |f|^2
\end{equation}
by the Poincar\'e-Lelong formula, and integration by parts shows that we have
\begin{equation}\label{3folds16}
J_\ep= \int_{(\CC^4, 0)}\log 1/|f|^2\ddc\Big(\frac{\theta(Z) \Xi'_\ep(\wt Z)}{|\wt Z|^{4}\log^2(1/|\wt Z|^2)}\Big)\wedge\omega_{\rm euc}^3. 
\end{equation}
\smallskip

\noindent The most "dangerous" term would be 
\begin{equation}\label{3folds17}
\int_{(\CC^4, 0)}\frac{\log 1/|f|^2}{\log^2(1/|\wt Z|^2)}\theta(Z) \Xi'_\ep(\wt Z) \ddc\Big(\frac{1}{|\wt Z|^{4}}\Big)\wedge\omega_{\rm euc}^3
\end{equation}
but very luckily, it is equal to zero since we integrate in the complement of $\wt Z=0$ the Laplacian of the Newton potential in $\CC^3$. 

\noindent The other terms are bounded by integrals of the following type
\begin{equation}\label{3folds18}
\int_{(\Supp \Xi_\ep', 0)}\frac{\log 1/|f|^2}{|\wt Z|^{6}\log^3(1/|\wt Z|^2)}\omega_{\rm euc}^4
\end{equation}
and the integral in \eqref{3folds18} converges to zero as $\ep\to 0$.
\smallskip

\noindent $\bullet$ \emph{Proof of \rm (3)} Since the dimension of $X$ is equal to 3, the set $W$ consists in a 
finite number of points. We denote by $\Xi_{\ep, W}$ the truncation function corresponding to $W$ defined on the
Euclidean space and restricted to $X$. 

\noindent The equality 
\begin{equation}\label{3folds30}
\int_{X\setminus Z}\Delta(\cF, h_{\cF})\wedge (\omega_X+ \ddc\varphi)= \int_{X\setminus Z}\Xi_{\ep, W}\Delta(\cF, h_{\cF})\wedge (\omega_X+ \ddc\varphi)
\end{equation}
holds as soon as $\ep\ll 1$ since $\omega_X+ \ddc\varphi$ is equal to zero locally near $W$. 

\noindent By the HE condition we have
\begin{equation}\label{3folds31}
\int_{X\setminus Z}\Xi_{\ep, W}\Delta(\cF, h_{\cF})\wedge \omega_X\geq 0
\end{equation}
and moreover, the point (2) above shows that the integration by parts 
\begin{equation}\label{3folds32}
\int_{X\setminus Z}\Xi_{\ep, W}\Delta(\cF, h_{\cF})\wedge \ddc\varphi= -\sqrt{-1}\int_{X\setminus Z}\Delta(\cF, h_{\cF})\wedge 
\dbar \Xi_{\ep, W}\wedge \partial \varphi
\end{equation}
is justified. 

\noindent To finish the argument, remark that there exists a positive constant $C> 0$ such that  
\begin{equation}\label{3folds33}
\sup_{X\setminus Z}|\dbar \Xi_{\ep, W}\wedge \partial \varphi|_{\omega_X}\leq  C
\end{equation}
because $-\varphi$ is a K\"ahler potential near $W$, and we can choose it to vanish up to order two at each point of $W$
(here we are referring to the coordinates in the Euclidean space containing the points of $W$).

\noindent All in all, it follows that the RHS of \eqref{3folds32} is bounded by the integral
\begin{equation}\label{3folds34}
C\int_{\Supp \Xi_{\ep, W}'}|\Theta(\cF, h_{\cF})|^2dV_{\omega_X}
\end{equation} 
which converges to zero since the curvature is $L^2$. Theorem \ref{Chern1} is proved.\qed
\medskip


\section{Orbifold-type K\"ahler metrics on klt spaces}\label{MAM}

\smallskip

\noindent Let $X$ be a normal K\"ahler space with at most klt singularities, endowed with a Kähler metric $\omega_X$. It is known (by the work of \cite{GKKP11}) that there exists $Z\subset X$ such that $\codim_X(Z)\geq 3$ and such that the complement $X\setminus Z$ only has quotient singularities. For example, if $X$ has dimension 3, it follows that $Z$ consists in at most a finite number of points $(p_i)_{i\in I}$. 
\smallskip

\noindent The main results of the current section are as follows. Given any open subset $U\subset X$ containing $Z$ above
we start by  
constructing a
K\"ahler metric $\omega_{\rm orb}$ on $X$ which accounts for the quotient singularities when restricted to the complement of $U$. By using Monge-Ampère 
theory we show that there exists a Kähler approximation of the form $\pi^\star\omega_{\rm orb}+ t\omega_{\wh X}$ whose $L^p(\wh X, \omega_{\wh X})$--norm of the determinant is uniformly bounded (with respect to $t$ and the approximation parameter) for some $p> 1$.  
This result, combined with \cite{Phong} will show that a uniform mean-value inequality holds true for the family of the approximating metrics.
Finally, this is used to establish Theorem \ref{HEquot}.

\subsection{Construction of $\omega_{\rm orb}$}\label{orbmetric} 
Let $Z\subset V_0'\Subset V_0''\Subset U$ be two open subsets of $X$ containing $Z$ and 
contained in a pre-assigned open subset $U\subset X$. We consider 
a finite cover 
$(V_i'\Subset V_i'')_{i=1,..., M}$ of $X\setminus \ol {V_0''}$ such that the following hold.
\begin{enumerate}
	\smallskip
	
	\item We have $\displaystyle X\setminus \ol {V_0''}\subset \cup_{i\geq 1} V_i '$ and none of the sets $V_i''$ are intersecting $V_0'$, for $i\geq 1$.
	\smallskip
	
	\item For each index $i\geq 1$ there exists a ball $U_i\subset \CC^n$ together with a finite group $\Gamma_i$ acting
	on $U_i$ such that $V_i''\simeq U_i/\Gamma_i$.
	\smallskip
	
	\item\label{forb} Let $\pi_i: U_i \rightarrow  U_i/\Gamma_i \simeq V_i ''$ be the cover. There exists a positive continuous function $w_i$ on $V_i''$ such that 
	$$w_i\leq 1, \qquad \pi_i^\star(\sqrt{-1}\ddbar w_i)\simeq \omega_{\rm euc}.$$
	Indeed such function is obtained by averaging the usual $\Vert z_i\Vert^2$ and dividing by an appropriate
	constant.
\end{enumerate}

\noindent By a gluing argument --i.e. by using cutoff functions-- we can construct a continuous function 
$\varphi: X\to [-\infty, 0[$ such that
\begin{equation}\label{new1}
	\sqrt{-1}\ddbar \varphi\geq -\frac{C}{2}\omega_X ,
\end{equation}
where $C$ is a constant. 

\noindent Actually \eqref{new1} follows directly from the gluing lemma, cf. \cite[Lemma 3.5]{Dem92}, as we briefly recall next. Let $\theta_i$ be a smooth function $\equiv 1$ on $V_i '$ and whose support is contained in $V_i ''$. We then consider 
$$\varphi := \log \big(\theta_0 ^2 + \sum_{i=1}^M \theta_i ^2(1+ w_i)\big),$$
where $w_i$ is the function in the item (\ref{forb}) above.

\noindent Given that on overlapping sets $V_i''\cap V_j''$ we have 
\begin{equation}\label{new40}
	C_{ij}^{-1}<\frac{1+ w_i}{1+ w_j}< C_{ij}
\end{equation}
for some positive constants $C_{ij}> 0$ (and with the convention that $w_0=0$ if some of the $i, j$ above are equal to zero).  Moreover, we can find a constant $C$ such that 
$$ \sqrt{-1} (\theta_j \partial\dbar\theta_j - \partial \theta_j \wedge \dbar \theta_j) \geq -C \omega_X \qquad\text{on } V_j '' . $$
Thanks to \cite[Lemma 3.5]{Dem92}, it follows that 
\eqref{new1} holds true.
\bigskip

\noindent We introduce then the metric 
\begin{equation}\label{new2}
	\omega_{\rm orb}:= C\omega_X+ \sqrt{-1}\ddbar \varphi  
\end{equation}
and summarise its properties as follows.  



\begin{enumerate}
	\smallskip
	
	
	\item For each uniformization map $\pi: \widetilde{V} \to V= \widetilde{V} /\Gamma$ such that $V\subset (X\setminus U)$ the pull-back $\displaystyle \pi^\star(\omega_{\rm orb})$ is quasi-isometric with the Euclidean metric. 
	\smallskip
	
	\item We have $\displaystyle \omega_{\rm orb}= C\omega_X $ when restricted to an open subset $\Omega$ containing $Z$. Here $\Omega\Subset U$. Moreover, the inequality $\displaystyle \omega_{\rm orb}\geq C\omega_X$ holds globally on $X$, where $C> 0$ is some positive constant (not necessarily equal to the previous one). 
\end{enumerate}
\medskip

\noindent We next discuss an important property of the metric $\omega_{\rm orb}$. When restricted to $X_{\rm reg}$, we can write its determinant as follows
\begin{equation}\label{new45}
	\omega_{\rm orb}^n= \Psi\omega_X^n
\end{equation}
for some function $\Psi> 0$. Our next statement shows that $\Psi$ is $L^p$-integrable on $(X_{\rm reg}, \omega_X)$ for some $p> 1$.
\smallskip

\begin{lemme}\label{lp} There exists $p>1$ such that the integral
	\begin{equation}\label{new46}
		\int_{X_{\rm reg}}\Psi^p\omega_X^n<\infty
	\end{equation}
	is convergent. 
\end{lemme}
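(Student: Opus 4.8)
The plan is to reduce the global $L^p$ bound to finitely many local computations on uniformizing charts around the singular points of $X$. First I would note that $\Psi$ is smooth and positive on $\xreg$, hence bounded on every set relatively compact in $\xreg$, and that $\Psi\equiv C^n$ on the neighbourhood $\Omega\supseteq Z$ on which $\omega_{\rm orb}=C\omega_X$; since $\omega_X^n$ has finite total mass on the compact space $X$, the contribution to $\int_{\xreg}\Psi^p\,\omega_X^n$ coming from $\Omega$ and from any open $\mathcal O$ with $\ol{\mathcal O}\Subset\xreg$ is finite for every $p$. As $X_{\rm sing}\subseteq\Omega\cup\bigcup_j V_j$ for suitably chosen finitely many small open sets $V_j$ covering $\xsing\setminus\Omega$, it remains to bound $\int_{V\cap\xreg}\Psi^p\,\omega_X^n$ for each such $V$. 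Every such point lies in $X\setminus Z$, hence has a quotient singularity by \cite{GKKP11}, so after shrinking we may write $V=B/\Gamma$ with $\pi\colon B\to V$ the uniformization, $d:=|\Gamma|$, and $B':=\pi^{-1}(\ol V)\Subset B$.

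On such a chart I would establish two pointwise estimates on $B'$. For a lower bound on $(\pi^\star\omega_X)^n$: near a point of $V$ the metric is of the form $\ddc u$ with $u$ smooth strictly psh on the ambient $\CC^N$, so writing $g\colon B\to\CC^N$ for the composite of $\pi$ with the local embedding we get $\pi^\star\omega_X\simeq g^\star\omega_{\rm euc}$, hence $(\pi^\star\omega_X)^n\simeq G\cdot\omega_{\rm euc}^n$ where $G=\sum_{|L|=n}|\Jac_L(g)|^2$ is the sum of squared $n\times n$ minors of the Jacobian of $g$. Since $\pi$ is generically \'etale and $V$ generically smooth, $g$ is generically an immersion, so some minor $J:=\Jac_{L_0}(g)$ is not identically zero; thus $G\ge|J|^2$, while $G$ stays bounded on $B'$. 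For an upper bound on $\pi^\star\omega_{\rm orb}$: using the explicit potential $\varphi=\log\big(\theta_0^2+\sum_i\theta_i^2(1+w_i)\big)$, the function $F:=\theta_0^2\circ\pi+\sum_i(\theta_i^2\circ\pi)(1+w_i\circ\pi)$ is bounded and bounded away from $0$, the functions $\theta_i\circ\pi$ and $w_i\circ\pi$ are smooth with first and second derivatives bounded on $B'$, and $1\le 1+w_i\circ\pi\le 2$; combined with $\ddc\log F=F^{-1}\ddc F-F^{-2}\sqrt{-1}\,\partial F\wedge\bar\partial F\le F^{-1}\ddc F$ and $\pi^\star\omega_X\le C\omega_{\rm euc}$, this yields $\pi^\star\omega_{\rm orb}\le C\,\omega_{\rm euc}$ on $B'$. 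Together these give $\Psi\circ\pi\le C\,G^{-1}$ almost everywhere on $B'$.

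The conclusion then follows from the change-of-variables formula for the quotient $\pi$:
\[
\int_{V\cap\xreg}\Psi^p\,\omega_X^n=\frac1d\int_{B'}(\Psi\circ\pi)^p\,(\pi^\star\omega_X)^n\le\frac{C}{d}\int_{B'}G^{\,1-p}\,\omega_{\rm euc}^n\le\frac{C}{d}\int_{B'}|J|^{-2(p-1)}\,\omega_{\rm euc}^n ,
\]
and the last integral is finite as soon as $p-1$ is smaller than the log-canonical threshold of the divisor $\{J=0\}$ on $B'$ — a standard fact obtained by passing to a log resolution and reducing to one-variable integrals of the form $\int|w|^{-2(p-1)a+2b}$. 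Choosing $p=1+\epsilon_0$ with $\epsilon_0>0$ below the finitely many thresholds arising from the finitely many charts proves the lemma.

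The delicate step is the upper bound $\pi^\star\omega_{\rm orb}\le C\,\omega_{\rm euc}$ in the second paragraph: this is precisely where the specific shape of $\varphi$ as the logarithm of a bounded, strictly positive combination of the averaged norm-squares $w_i$ is essential, and a general bounded quasi-psh potential would not suffice. The remaining ingredients — the comparison $\pi^\star\omega_X\simeq g^\star\omega_{\rm euc}$, the local integrability of $|J|^{-2\epsilon}$ for small $\epsilon$, and the quotient change of variables — are routine.
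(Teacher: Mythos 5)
Your proof is correct and follows essentially the same route as the paper: reduce to the finitely many uniformizing charts away from $\Omega$, bound $\Psi\circ p_\alpha$ by $C/|\Jac(p_\alpha)|^2$, change variables, and invoke local integrability of $|\Jac(p_\alpha)|^{-2\ep_0}$ for small $\ep_0$. You merely make explicit two points the paper leaves implicit — the meaning of $|\Jac(p_\alpha)|^2$ as a sum of squared minors bounded below by a single non-vanishing holomorphic minor, and the upper bound $p_\alpha^\star\omega_{\rm orb}\le C\omega_{\rm euc}$ coming from the smoothness of $p_\alpha^\star\varphi$.
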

\begin{proof} Let $(V_\alpha)_{\alpha=1,\dots, M}$ be a finite cover of 
	$X$ such that 
	\begin{equation}
		\frac{1}{2}\Omega\subset \cup_{\alpha=1}^{M'} V_\alpha\subset \Omega
	\end{equation}
	and such that 
	\begin{equation}
		\cup_{\alpha=1+ M'}^{M} V_\alpha \subset X\setminus \frac{1}{4}\Omega
	\end{equation}
	together with $V_\alpha\simeq U_\alpha/\Gamma$ for $\alpha= 1+ M', \dots , M$. 
	\smallskip
	
	\noindent We analyze the convergence of the integral \eqref{new46} by restricting to each of the open sets $(V_\alpha)$ above.
	In the first place, if $\alpha= 1,\dots, M'$ we have 
	\begin{equation}\label{new47}
		\Psi|_{V_\alpha}\leq C
	\end{equation}
	by the property (2) above and there is nothing to prove (since the volume of $X_{\rm reg}$ is finite). If $\alpha\geq 1+ M'$, then we use the 
	local uniformisations $p_\alpha : U_\alpha \rightarrow V_\alpha \simeq U_\alpha/\Gamma$. Since $p_\alpha ^* \varphi$ is smooth, so we see
	that 
	\begin{equation}\label{new48}
		\Psi|_{V_\alpha}\circ p_\alpha\leq \frac{C}{|\Jac(p_\alpha)|^2}
	\end{equation}
	for some constant $C$,  and therefore 
	\begin{equation}\label{new49}
		\int_{V_\alpha}\Psi^{1+\ep_0}\omega_X^n|_{V_\alpha}\leq C \int_{U_\alpha} \frac{1}{|\Jac(p_\alpha)|^{2\ep_0}}d\lambda
	\end{equation}
	and this last quantity is clearly convergent, as soon as $0<\ep_0\ll 1$.  
\end{proof}
\medskip

\noindent Similar arguments allow us to deduce the following. Assume that $t\in [0,1]$ is a positive real number and define the function $\Psi_t$ by the following equality
\begin{equation}\label{new50}
	(\pi^\star \omega_{\rm orb}+ t\omega_{\wh X})^n= \Psi_t\omega_{\wh X}^n.
\end{equation}
Then we claim that there exists a positive $\ep_0$ such that we have 
\begin{equation}\label{new51}
	\int_{\wh X}\Psi_t^{1+\ep_0}\omega_{\wh X}^n\leq C< \infty
\end{equation} 
for some constant $C$ which is \emph{independent of t}. 

\noindent Indeed, this can be verified as in the previous lemma: for each index $\alpha\geq M'+1$, let $\pi_\alpha: \wh U_\alpha\to U_\alpha$ be a modification of $U_\alpha$ such that there exists a generically finite morphism $\wh p_\alpha: \wh U_\alpha\to \pi^{-1}(V_\alpha)$ with the property that
$$\pi\circ \wh p_\alpha= p_\alpha \circ\pi_\alpha$$
holds true (we take any desingularisation of the fibered product $\displaystyle U_\alpha\times_{V_\alpha}\pi^{-1}(V_\alpha)$.)

\noindent Let $\Psi_k$ be the function defined by the equality 
$$\pi^\star \omega_{\rm orb}^k\wedge \omega_{\wh X}^{n-k}= \Psi_k\omega_{\wh X}^{n}$$
and then we remark that we have 
$$\Psi|_{k}\circ \wh p_\alpha\leq \frac{C}{|\Jac(\wh p_\alpha)|^2},$$
that is to say, the analogue of \eqref{new48} holds true. Then we argue as in \eqref{new49} and \eqref{new51} follows. \qed

\medskip

\subsection{Few results from Monge-Amp\`ere theory}

\noindent Next we recall an important estimate from MA theory. Let $\pi: \wh X\rightarrow X$ be a desingularization and we fix a smooth Kähler metric $\omega_{\wh X}$ on $\wh X$. We consider a real number $p> 1$, a positive constant $C_0> 0$ and a 
positive function $\Psi\geq 0$ such that 
\begin{equation}\label{ma1}
	\int_{\wh X}\Psi^p\omega_{\wh X}^n< C_0.
\end{equation}
Associated with such function $\Psi$ we consider the Monge-Ampère equation
\begin{equation}\label{new52}
	(\pi^\star \omega_{\rm orb}+ t\omega_{\wh X}+ \sqrt{-1}\ddbar \phi)^n= \Psi\omega_{\wh X}^n, \qquad \sup_X \phi=0.
\end{equation}
which by the work of \cite{Kol98, EGZ09, TZ06} admits a unique solution. 
Moreover, the following fundamental estimate
holds 
\begin{equation}\label{new53}
	\sup_X|\phi|\leq C= C(\omega_X, \omega_{\wh X},C_0, p)
\end{equation} 
i.e. the $L^\infty$ norm of the solution of \eqref{new52} is bounded by a constant which is independent of $t$ and the explicit shape of 
$\Psi$, as long as the $L^p$ norm is satisfied. In addition, if $\Psi$ is smooth in the complement of $D$, then so is $\phi$.
\medskip

\noindent In order to apply the arguments in the smooth case (sections 3-4)
we have to construct a smooth approximation of the metric 
$\omega_t= \pi^* \omega_{\rm orb} +t \omega_{\wh X}$ in such a way that the entropy function is uniformly bounded. This is done as follows.
\medskip

\noindent Consider the Monge-Ampère equation
\begin{equation}\label{ma12}(\pi^* \omega_{\rm orb} +t \omega_{\wh X} + \ddc \phi_{t,\ep})^n = (\Psi_t \star \tau_\ep) \omega_{\wh X} ^n,
\end{equation}
whose solution is normalized by the condition $\max_{\wh X}(\phi_{t,\ep}+ \varphi)= 0$. In \eqref{ma12} we denote by $(\tau_\ep)_{\ep> 0}$ a family of 
regularizing kernels on $\wh X$. 
\smallskip

\noindent Note that $\displaystyle \omega_{\rm orb} =\omega_X + \ddc \varphi$ for the function $\varphi$ defined in section \ref{orbmetric}, where $\omega_X$ is the reference Kähler metric on $X$. 
Then we can rewrite \eqref{ma12} as follows
\begin{equation}\label{ma13}(\pi^* \omega_{X} +t \omega_{\wh X} + \ddc (\phi_{t,\ep} +\varphi) )^n = (\Psi_t \star \tau_\ep) \omega_{\wh X} ^n 
\end{equation}
and now the form $\pi^* \omega_{X} + t\omega_{\wh X}$ is smooth and definite positive for each $t> 0$. Moreover, there exists strictly positive constants $C, \delta_0$ such that we have 
\begin{equation}\label{ma14}\int_{\wh X}(\Psi_t \star \tau_\ep)^{1+ \delta_0}\omega_{\wh X}^n < C
\end{equation}
for all $t$ and $\ep$.
Thus we have 
\begin{equation}\label{ma15}\sup_{\wh X}|\psi_{\eta}|\leq C, \qquad \sup_{\wh X}|\phi_{\eta}  |\leq C
\end{equation}
where $\psi_{\eta}:= \phi_{t, \ep} + \varphi$, $\phi_\eta := \phi_{t,\ep}$ for a constant $C> 0$ independent of $\eta:= (t, \ep)$. 
\medskip

\noindent The following statement is the crucial step towards the construction of the Hermitian-Einstein metric with respect to 
$\omega_{\rm orb}$. 
\begin{cor}\label{rich}
For each couple $\eta= (t, \ep)$ we consider the metric 
	$$\omega_\eta := \pi^* \omega_{X} +t \omega_{\wh X} + \ddc \psi_\eta =  \pi^*\omega_{\rm orb}  +t \omega_{\wh X} + \ddc \phi_{\eta}.$$
	Let $\theta$ be a smooth $(1,1)$-form on $\wh X$ and let $f$ be a $C^2$-function on $\wh X$ such that 
	\begin{equation}\label{twistedlap}\Lambda_{\omega_\eta}\theta + \Delta_{\omega_\eta}'' f \geq c
	\end{equation}
on $\wh X$. 
\begin{enumerate}
\smallskip

\item[\rm (a)] 
There exists a constant $C > 0$ depends only on $\wh X, \omega_{\wh X}, \theta, c$ --in particular independent of $\eta = (t,\ep)$-- such that the inequality
	\begin{equation}\label{meanvalue}\sup_{\wh X}\left(C\log |s_D|^{2}+ f\right)\leq C(1+ \int_{\wh X}|f| \omega_\eta ^n )
	\end{equation}
	holds true.
\smallskip

\item[\rm (b)] In addition to \eqref{twistedlap}, we assume that $\theta\leq C_\theta\pi^* \omega_X$. Then there exists a constant $C$ depending on the quantities in (a) and $C_\theta$ such that 
the inequality
\begin{equation}\label{meanvalue, I}\sup_{\wh X} f\leq C(1+ \int_{\wh X}|f| \omega_\eta ^n )
	\end{equation}
is satisfied.   
\end{enumerate}

\end{cor}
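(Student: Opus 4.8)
The plan is to read Corollary~\ref{rich} as the $\omega_{\rm orb}$--version of Corollary~\ref{mv} together with Lemma~\ref{mv1}. The only genuinely new input is a \emph{uniform} (in $\eta=(t,\ep)$) entropy bound for the smooth Kähler metrics $\omega_\eta$; once this is granted, Theorem~\ref{phong} applies to the whole family with one and the same constant, and parts (a), (b) follow by exhibiting an auxiliary function $v$ with $\Delta''_{\omega_\eta}v\ge -a$ (with $a$ independent of $\eta$) whose supremum controls the left--hand sides and whose $L^1(\omega_\eta)$ norm is controlled by $\int_{\wh X}|f|\,\omega_\eta^n$ plus a universal constant. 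The one new device compared with Section~\ref{smoothset} is that $\pi^\star\omega_X\le\omega_\eta$ is \emph{no longer automatic}, so in the last step the trace $\Lambda_{\omega_\eta}\pi^\star\omega_X$ must be absorbed by subtracting a multiple of the Monge--Amp\`ere potential $\psi_\eta$ from the test function, using the identity $\Delta''_{\omega_\eta}\psi_\eta=n-\Lambda_{\omega_\eta}(\pi^\star\omega_X+t\omega_{\wh X})$.

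First, the entropy bound. By \eqref{ma13} the form $\omega_\eta$ is a genuine smooth Kähler metric in the fixed class $\{\pi^\star\omega_X+t\omega_{\wh X}\}$: its potential $\psi_\eta=\phi_{t,\ep}+\varphi$ is smooth (the right--hand side of \eqref{ma13} is a smooth positive volume form) and bounded uniformly by \eqref{ma15}, and $\omega_\eta^n=(\Psi_t\star\tau_\ep)\,\omega_{\wh X}^n$. Hence the entropy density of $\omega_\eta$ relative to $\omega_{\wh X}$ is $F_\eta=\log(\Psi_t\star\tau_\ep)$, and for every $p\ge 1$
\[
\int_{\wh X}|F_\eta|^p\,\omega_\eta^n=\int_{\wh X}\big|\log(\Psi_t\star\tau_\ep)\big|^p\,(\Psi_t\star\tau_\ep)\,\omega_{\wh X}^n .
\]
Splitting the domain according to whether $\Psi_t\star\tau_\ep\ge 1$ or not and using the elementary inequalities $x|\log x|^p\le C_{p,\delta_0}x^{1+\delta_0}$ for $x\ge1$ and $\sup_{0<x\le1}x|\log x|^p<\infty$, together with the uniform bound \eqref{ma14}, this is $\le C_p$ with $C_p$ independent of $\eta$. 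Fixing $p>n$ and taking $\chi:=\pi^\star\omega_X$ (semipositive, positive off an analytic set), Theorem~\ref{phong} yields a constant $C_{\rm univ}$, independent of $\eta$, such that every $v\in L^1(\wh X,\omega_\eta^n)$ which is $\cC^2$ on $(v\ge-1)$ and satisfies $\Delta''_{\omega_\eta}v\ge-a$ obeys $\sup_{\wh X}v\le C_{\rm univ}(1+\Vert v\Vert_{L^1(\wh X,\omega_\eta)})$.

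Next, the two inequalities. Choose the smooth metrics on the $\mathcal O(D_i)$ so that $\sqrt{-1}\ddbar\log|s_D|^{2\ep_0}=\omega_{\wh X}-\pi^\star\omega_X$ off $D$ (as a current on $\wh X$ it equals this plus a non-negative integration current along $D$, which only improves the lower bounds), and put $C':=C_\theta$. For (a) take $v:=f+C'\log|s_D|^{2\ep_0}-C'\psi_\eta$; since $\theta\le C_\theta\,\omega_{\wh X}$ gives $\Lambda_{\omega_\eta}\theta\le C'\big(\Lambda_{\omega_\eta}\pi^\star\omega_X+\Delta''_{\omega_\eta}\log|s_D|^{2\ep_0}\big)$, the hypothesis \eqref{twistedlap} combined with $\Delta''_{\omega_\eta}\psi_\eta=n-\Lambda_{\omega_\eta}\pi^\star\omega_X-t\Lambda_{\omega_\eta}\omega_{\wh X}$ yields $\Delta''_{\omega_\eta}v\ge c-C'n+C't\,\Lambda_{\omega_\eta}\omega_{\wh X}\ge -a$ with $a=C'n-c$; note $v\in L^1(\omega_\eta^n)$ because $\log|s_D|\in L^q(\omega_\eta^n)$ for every $q$ (H\"older with \eqref{ma14}). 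Step~1 then gives $\sup_{\wh X}v\le C_{\rm univ}(1+\Vert v\Vert_{L^1})$, and since $|\psi_\eta|\le C$ and $\Vert\log|s_D|^{2\ep_0}\Vert_{L^1(\omega_\eta)}\le C$ are uniform, we get $\sup_{\wh X}\big(C\log|s_D|^{2}+f\big)\le C\big(1+\int_{\wh X}|f|\,\omega_\eta^n\big)$, i.e.\ \eqref{meanvalue} (up to adjusting $C$, $\log|s_D|^2$ and $\log|s_D|^{2\ep_0}$ being interchangeable). For (b), the extra assumption $\theta\le C_\theta\pi^\star\omega_X$ makes the $\log|s_D|^{2\ep_0}$ term superfluous: with $v:=f-C'\psi_\eta$ the same computation gives $\Delta''_{\omega_\eta}v\ge c-C'n+(C'-C_\theta)\Lambda_{\omega_\eta}\pi^\star\omega_X+C't\Lambda_{\omega_\eta}\omega_{\wh X}\ge-a$, and Step~1 together with $|\psi_\eta|\le C$ gives \eqref{meanvalue, I}.

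The main obstacle is entirely contained in Step~1: obtaining the entropy bound \emph{uniformly in both} $t$ and $\ep$. This is precisely where the Monge--Amp\`ere preparations of Section~\ref{MAM} are used --- the a priori estimate \eqref{new53} delivers the uniform $L^\infty$ bound \eqref{ma15} (so that the very definition of $\omega_\eta$ is uniformly controlled), and the uniform $L^{1+\delta_0}$ bound \eqref{ma14} on the regularised densities is what upgrades to an $L^p$ entropy bound for every $p$. Once Step~1 is available, the remainder is a transcription of the arguments behind Corollary~\ref{mv} and Lemma~\ref{mv1}, the only substitute being the use of $-C'\psi_\eta$ in the test function to absorb $\Lambda_{\omega_\eta}\pi^\star\omega_X$, which replaces the trivial bound $\Lambda_{\omega_t}\pi^\star\omega_X\le n$ available in the untwisted situation $\omega_t=\pi^\star\omega_X+t\omega_{\wh X}$.
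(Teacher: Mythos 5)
Your proposal is correct and follows essentially the same route as the paper: a uniform entropy bound for $\omega_\eta$ deduced from \eqref{ma14}, followed by the Guo--Phong--Sturm mean-value inequality applied to the test function $f+C\log|s_D|^{2\ep_0}-C\psi_\eta$, whose Laplacian lower bound is obtained by writing $\pi^\star\omega_X+t\omega_{\wh X}=\omega_\eta-\ddc\psi_\eta$ and using the uniform $L^\infty$ bound \eqref{ma15} on $\psi_\eta$. The only differences are expository (you spell out the $L^p$ entropy estimate and the $L^1$-integrability of $\log|s_D|$, which the paper leaves implicit), plus a harmless notational slip in part (a) where $C'$ is set equal to $C_\theta$ before that constant is introduced.
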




\begin{proof} Given the arguments for the proof of Corollary \ref{mv}, all we still have to verify is that the Laplace inequality for $f$
with respect to $(\wh X, \omega_\eta)$ hold true. The slight trouble here is that we do not have a uniform lower bound for $\omega_\eta$ (this is the case for $\omega_t$ and it was very useful...).  
We therefore proceed as follows.
	
	\noindent The inequality
	\begin{equation}\label{ma17}\theta_L \leq C(\pi^* \omega_X +t \omega_{\wh X}+ \sqrt{-1}\ddbar \log|s_D|^{2\ep_0})
	\end{equation}
	holds true in the sense of currents for some positive constant $C$.
	Then 
	\begin{equation}\label{ma18}
		\theta_L - C\sqrt{-1}\ddbar   (\log|s_D|^{2\ep_0}- \phi_{\eta} - \varphi)\leq C(\pi^* \omega_{\rm orb} +t \omega_{\wh X} + \ddc \phi_{\eta}).
	\end{equation}
	Since we have $\omega_{\eta} = \pi^* \omega_{\rm orb} +t \omega_{\wh X} + \ddc \phi_{\eta}$,
	it follows that the inequality
	\begin{equation}\label{ma19}\Lambda_{\omega_{\eta}} (\theta_L - C\sqrt{-1}\ddbar ( \log|s_D|^{2\ep_0}  -\phi_{\eta} -\varphi)) \leq C\end{equation}
	is satisfied. Together with \eqref{twistedlap}, we obtain
	\begin{equation}\label{ma119}
		\Delta''_{\omega_\eta}  ( \log|s_D|^{2\ep_0}  -\phi_{\eta} -\varphi +f) \geq -C_1 \end{equation}
	\smallskip
	
	\noindent The inequality \eqref{ma14} implies that the entropy $\omega_{\eta}$ is uniformly bounded, we thus infer that the inequality 
	\begin{equation}\label{ma16}\sup_{\wh X}\left(C\log |s_D|^{2}+ f\right)\leq C(1+ \int_{\wh X}|f|dV_\eta)
	\end{equation}
	holds uniformly with respect to $\eta= (t, \ep)$, where we have used that both $\varphi$ and $\phi_{\eta}$ are uniformly bounded. Therefore we obtain \eqref{meanvalue} and the point (a) is established.
	
\noindent The idea for (b) is absolutely the same, except that we do not need the factor $\log|s_D|^{2}$, thanks to the additional 
hypothesis. We therefore skip the details.	
\end{proof}
\medskip




\noindent It so happens that the family $(\wh X, \omega_\eta)$ satisfies the following uniform estimates -- this will be very useful later.  

\begin{proposition}
	Let $(E, h_0)$ be a holomorphic Hermitian vector bundle on $\wh X$, where $h_0$ is a smooth metric. Then there exists a constant $C$ such that for any parameter $\eta=(t, \ep)$ whose components are  $\ll 1$ we have 
	\begin{equation}\label{ma20}
		\int_{\wh X}\log\frac{1}{|\sigma_D|^2}|\Lambda_\eta\Theta(E, h_0)|_{h_0} \omega_{\eta} ^n \leq C
	\end{equation}
\end{proposition}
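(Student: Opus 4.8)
The plan is to reduce \eqref{ma20} to a uniform estimate for the integral of $\log\frac{1}{|\sigma_D|^2}$ against the mixed measures $\omega_{\wh X}\wedge\omega_\eta^{n-1}$, and then to obtain that estimate by repeatedly replacing factors of $\omega_\eta$ by the fixed smooth form whose cohomology class it carries, controlling the error terms through integration by parts.

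\emph{First step (reduction).} Since $h_0$ is a fixed smooth metric on $E$ and $\wh X$ is compact, the curvature $\Theta(E,h_0)$ is a bounded $\End(E)$-valued $(1,1)$-form, so there is a constant $C_0>0$ with $-C_0\,\omega_{\wh X}\otimes\Id_E\le\sqrt{-1}\Theta(E,h_0)\le C_0\,\omega_{\wh X}\otimes\Id_E$. Contracting with $\omega_\eta$ gives the pointwise bound $|\Lambda_\eta\Theta(E,h_0)|_{h_0}\le C_0\,\tr_{\omega_\eta}(\omega_{\wh X})$, and since $\tr_{\omega_\eta}(\omega_{\wh X})\,\omega_\eta^n=n\,\omega_{\wh X}\wedge\omega_\eta^{n-1}$, the left-hand side of \eqref{ma20} is at most $C_0\,n\int_{\wh X}\log\frac{1}{|\sigma_D|^2}\,\omega_{\wh X}\wedge\omega_\eta^{n-1}$. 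It therefore suffices to bound this last integral uniformly in $\eta$. Write $\psi:=\log\frac{1}{|\sigma_D|^2}$; after normalising the metric on $\mathcal O(D)$ we may assume $\psi\ge 0$, and then $\psi$ has at worst logarithmic poles along $D$, hence $\psi\in L^1(\wh X,\omega_{\wh X}^n)$, while $\ddc\psi=\Theta_D-[D]$ in the sense of currents, where $\Theta_D$ denotes the smooth curvature form of the chosen metric on $\mathcal O(D)$; in particular $\ddc\psi\le\Theta_D\le C\,\omega_{\wh X}$.

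\emph{Second step (peeling off $\omega_\eta$).} By \eqref{ma13} I can write $\omega_\eta=\alpha+\ddc\psi_\eta$, where $\alpha:=\pi^\star\omega_X+t\,\omega_{\wh X}$ is smooth, closed and semipositive with $\alpha\le C\,\omega_{\wh X}$ for $t\le 1$, and $\psi_\eta:=\phi_{t,\ep}+\varphi$ is uniformly bounded by \eqref{ma15}; moreover $\{\omega_\eta\}=\{\alpha\}$ since $\ddc\psi_\eta$ is exact. Replacing one factor of $\omega_\eta$ and integrating by parts against the closed current $T:=\omega_{\wh X}\wedge\omega_\eta^{n-2}$,
\[
\int_{\wh X}\psi\,\omega_{\wh X}\wedge\omega_\eta^{n-1}=\int_{\wh X}\psi\,\alpha\wedge T+\int_{\wh X}\psi_\eta\,\ddc\psi\wedge T .
\]
Using $\ddc\psi=\Theta_D-[D]$, the bound $|\psi_\eta|\le C$, the sandwich $-C\omega_{\wh X}\le\Theta_D\le C\omega_{\wh X}$ and $[D]\ge 0$, the second integral is bounded, in absolute value, by a fixed constant times cohomological pairings among the classes $\{\omega_{\wh X}\}$, $\{\alpha\}=\{\omega_\eta\}$ and $\{D\}$, all of which stay bounded for $t\in[0,1]$. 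Iterating $n-1$ times converts every copy of $\omega_\eta$ into $\alpha$, so
\[
\int_{\wh X}\psi\,\omega_{\wh X}\wedge\omega_\eta^{n-1}=\int_{\wh X}\psi\,\omega_{\wh X}\wedge\alpha^{n-1}+O(1),
\]
with $O(1)$ independent of $\eta$. Since $\omega_{\wh X}\wedge\alpha^{n-1}\le C^{n-1}\omega_{\wh X}^n$ for $t\le 1$, we get $\int_{\wh X}\psi\,\omega_{\wh X}\wedge\alpha^{n-1}\le C^{n-1}\int_{\wh X}\psi\,\omega_{\wh X}^n<\infty$, a fixed finite quantity, and \eqref{ma20} follows.

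\emph{Main obstacle.} The delicate point is the legitimacy of the integrations by parts, since $\psi$ blows up along $D$ while near $D$ the powers $\omega_\eta^k$ are, in general, only closed positive currents with bounded local potentials rather than smooth forms. I would handle this in the usual way: $\omega_\eta$ has a globally bounded $\ddc$-potential, so its Bedford--Taylor powers and their wedge products with the smooth forms $\omega_{\wh X}$ and $\alpha$ are well defined; as $-\psi=\log|\sigma_D|^2$ is quasi-psh with analytic poles along $D$, it is integrable against all currents occurring, and the Bedford--Taylor--Demailly integration-by-parts formula applies (using that $\psi_\eta$ is bounded). Alternatively, one performs the whole computation with $\psi$ replaced by the bounded quasi-psh regularisations $\psi_\delta:=\log\frac{1}{|\sigma_D|^2+\delta}$ --- for which $\ddc\psi_\delta\le C\,\omega_{\wh X}$ uniformly in $\delta$ and every term is unproblematic --- and then lets $\delta\to 0$ by monotone convergence.
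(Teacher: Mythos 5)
Your argument is correct and follows essentially the same route as the paper: after reducing \eqref{ma20} to bounding $\int_{\wh X}\log\frac{1}{|\sigma_D|^2}\,\omega_{\wh X}\wedge\omega_\eta^{n-1}$ via the bounded curvature of $h_0$, you peel off the factors of $\omega_\eta$ one by one by integrating by parts against the uniformly bounded potentials $\psi_\eta$ and using $\ddc\log\frac{1}{|\sigma_D|^2}=\theta_D-[D]$, with the error terms controlled cohomologically — exactly the iteration in \eqref{ma22}--\eqref{ma23}. Your additional remarks justifying the integration by parts (Bedford--Taylor theory, or the regularisation $\log\frac{1}{|\sigma_D|^2+\delta}$) only make explicit a point the paper leaves implicit.
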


\begin{proof}
	The first observation is that up to a constant, the expression we have to bound is
	\begin{equation}\label{ma21}
		\int_{\wh X}\log\frac{1}{|\sigma_D|^2}\omega_{\wh X}\wedge \omega_{\eta}^{n-1}.
	\end{equation}
	
	Now we estimate \eqref{ma21} by using integration by parts: assume that $\tau$ is smooth function on $\wh X$, such that $\displaystyle \sup_{\wh X}|\tau|= 1$. 
	Then we have
	\begin{equation}\label{ma22}
		\int_{\wh X}\log\frac{1}{|\sigma_D|^2}\ddc\tau\wedge \omega_{\wh X}\wedge \omega_{\eta}^{n-2}= \int_{\wh X}\tau \theta_D\wedge \omega_{\wh X}\wedge \omega_{\eta}^{n-2}- \int_D \tau \omega_{\wh X}\wedge \omega_{\eta}^{n-2}.
	\end{equation}
	Therefore, up to a constant only depending on $D$ and $\{\omega_\eta\}$ this is bounded by 
	\begin{equation}\label{ma23}
		C\int_{\wh X}\omega_{\wh X}^2\wedge \omega_{\eta}^{n-2}\end{equation}	
	where $C$ in \eqref{ma23} is the supreme of the norm of $\theta_D$ with respect to $\omega_{\wh X}$. Applying this to $\tau = \psi_\eta$, we know that \eqref{ma21} is bounded by 
	$$C \int_{\wh X}\omega_{\wh X}^2\wedge \omega_{\eta}^{n-2} . $$
	Iterated applications of this argument shows that \eqref{ma21} is bounded by a constant independent of $\eta$. 
\end{proof}

	
%


\section{Hermite Einstein Metrics in conic setting}\label{coset}

\noindent We show in this section that the results we have established in Section \ref{smoothset} still hold 
if we endow $X$ with the metric $\omega_{\rm orb}$ constructed in Section \ref{orbmetric}.
\smallskip

\noindent The results we have established in Section \ref{MAM} show that for every $\eta= (t, \ep)$ we have a smooth Kähler metric $\omega_\eta$ on $\wh X$ satisfying the following properties.

\begin{itemize}
\smallskip 

\item[(i)]  The metric $\omega_\eta =\pi^*\omega_{\rm orb} +t\omega_{\wh X} +\ddc \psi_\eta$ belongs to the class $\{\pi^*\omega_{\rm orb} +t\omega_{\wh X}\}$.
\smallskip 
\item[(ii)]  We have $\omega_\eta\to \pi^*\omega_{\rm orb}$ as $\eta\rightarrow 0$ and the convergence is 
uniform on compacts in the complement of the support of $D$. Moreover, we have $\phi_\eta\to 0$ in $L^1(\wh X, \pi^*\omega_{\rm orb})$.
\smallskip 

\item[(iii)] The uniform mean value inequality holds. Namely, fix a smooth $(1,1)$-form $\theta$ on $\wh X$. Let $\rho$ be a function on $\wh X$ which is smooth when restricted to the set $\rho \geq -1$ and such that the relation
\begin{equation}\label{eqcon4}
	\Lambda_{\eta} \theta+ \Delta_{\eta}''\rho \geq a  
\end{equation} 
holds, where $a$ is a constant. Here the Laplace and the contradiction are with respect to the metric $\omega_\eta$. Then there exists a positive constant $C> 0$ independent of $\eta$ such that 
	\begin{equation}\label{newcon21}
		\sup_{\wh X}\left(C\log |s_D|^{2}+ \rho\right)\leq C\big(1+ \int_{\wh X}|\rho|dV_\eta\big)
	\end{equation}
where the subscript $\eta$ means that the Laplace operator or norm is with respect to the metric $\omega_\eta$.
\smallskip 

\item[(iv)] In the same set-up as in (iii), assume moreover that we have $\theta\leq C_\theta\pi^\star\omega_X$. Then we obtain a stronger version of \eqref{newcon21}, namely
\begin{equation}\label{newcon21, I}
		\sup_{\wh X}\rho\leq C_2\big(1+ \int_{\wh X}|\rho|dV_\eta\big).
	\end{equation}
\end{itemize}	

\medskip

\noindent Let $\cF$ be a reflexive sheaf on $X$ which is stable with respect to $\omega_{\rm orb}$. As we have already seen,
there exists a desingularisation $\pi: \wh X \rightarrow X$ of $X$ such that $E := \pi^* \cF/\Tor$ is locally free. We denote by $D$ be the exceptional locus of $\pi$, which is assumed to be snc.
As already explained, the bundle $E$ is stable with respect to $\omega_\eta$, and we recall that we have constructed 
a smooth Hermitian metric $h_E$ on $E$ for which the curvature is bounded from below by $\pi^\star\omega_X$. 
\smallskip

\noindent By the theorem of Uhlenbeck-Yau, there exists a metric $h_\eta := h_0\circ H_\eta$ on $E$ which is HE 
with respect to $\omega_\eta$. The endomorphism $H_\eta$ can be decomposed as $$H_\eta =e^{- \frac{1}{r}\rho_\eta} \exp (s_\eta) ,$$ 
where 
\begin{equation}
\Tr (s_\eta)= 0, \qquad \Lambda_\eta\theta_L+ \Delta_\eta(\rho_\eta)= const
\end{equation} 
(notations as in Section 2) together with the normalisation $\int_{\wh X} \rho_\eta dV_\eta=0$. Set $S_\eta = \exp (s_\eta)$.
Thanks to \eqref{newcon21}, \eqref{newcon21, I} we have 
\begin{equation}\label{newcon211}
\max\left\{C\log |s_D|^{2}+ \rho_{\eta}, -\rho_\eta\right\}\leq C\big(1+ \int_{\wh X}|\rho_{\eta}|dV_\eta\big),
\end{equation}
since we have $\theta_L\geq -C\pi^\star\omega_X$ as consequence of \eqref{ross2}.
\smallskip

\noindent We recall that in Section 3 we have established the inequality
\begin{equation}\label{new25, I}
\Delta_{\eta}''\left(\log \Tr(H_\eta)\right)\geq \frac{\Tr\big(\Lambda_\eta \Theta(E, h_{E})\circ H_\eta\big)}{\Tr(H_\eta)}+ C,
\end{equation}
and if we define $\displaystyle \theta:= - \frac{\Tr\big(\Theta(E, h_{E})\circ H_\eta\big)}{\Tr(H_\eta)}$, then we have 
\begin{equation}\label{ross300}
\log \Tr(H_\eta)\leq C\big(1+ \int_{\wh X}|\log \Tr(H_\eta)|dV_\eta\big)
\end{equation}
as consequence of \eqref{newcon21, I}, which applies since $\theta\geq -C\pi^\star\omega_X$.
\medskip

\noindent In this section we are aiming at the following two main results. 

\begin{thm}\label{Estconic, I} There exists a constant $C> 0$ such that the inequalities 
	\begin{equation}\label{estconic1}
		\Tr (H_\eta)\leq C, \qquad -C\leq \rho_\eta\leq -C\log|s_D|^2\qquad \int_{\wh X}|D's_{\eta}|^2\frac{dV_{\eta}}{\log\frac{1}{|s_D|^2}}\leq C
	\end{equation}
	hold, where $s_D$ is the product of sections defining the exceptional divisor of the map $\pi: \wh X\to X$.
\end{thm}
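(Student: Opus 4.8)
The plan is to run the proof of Theorem~\ref{Est, I} almost verbatim, with $\omega_t$ replaced by the smooth approximations $\omega_\eta$ of Section~\ref{MAM} and with Corollary~\ref{mv} replaced by its conic refinements \eqref{newcon21}--\eqref{newcon21, I}. All the ingredients that were special to the smooth case have already been upgraded for us: the two-sided control \eqref{newcon211} on $\rho_\eta$, the bound \eqref{ross300} on $\log\Tr(H_\eta)$, and — most importantly — the uniform integral estimate \eqref{ma20}, which takes over here the role played in the smooth case by the crude pointwise bound $\sup_{\wh X}|\Lambda_t\Theta(E,h_E)|\,dV_t\leq C\,dV$.

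First I would record the differential inequality. Writing $\zeta_\eta:=\log H_\eta=-\tfrac1r\rho_\eta\otimes\Id_E+s_\eta$, the argument of Lemma~\ref{W1} — which uses nothing about the background metric beyond its being Kähler on $\wh X$, together with the normalisation $\int_{\wh X}\rho_\eta\,dV_\eta=0$ — gives
\[
0\ \geq\ \int_{\wh X}\big\langle\Phi(\zeta_\eta)(D'\zeta_\eta),D'\zeta_\eta\big\rangle\,dV_\eta+\int_{\wh X}\Tr\big(\zeta_\eta\,\Lambda_\eta\Theta(E,h_E)\big)\,dV_\eta .
\]
As in Section~\ref{smoothset} this leads, as $\eta\to0$, to the dichotomy: either \textbf{(1)} $\int_{\wh X}(|\rho_\eta|+\log\Tr S_\eta)\,dV_\eta$ stays bounded, or \textbf{(2)} there are sequences $\delta_i\to0$, $\eta_i\to0$ with $\int_{\wh X}(\delta_i|\rho_i|+\delta_i\log\Tr S_i)\,dV_i=1$ (notation as in Section~\ref{smoothset}, with $dV_i:=dV_{\eta_i}$).

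In Case~(1), feeding the $L^1$ bound into \eqref{newcon211} and \eqref{ross300} yields at once $\Tr H_\eta\leq C$ and $-C\leq\rho_\eta\leq-C\log|s_D|^2$, hence $|\zeta_\eta|\leq C(1+\log\frac1{|s_D|^2})$. Then the second integral above is bounded, using \eqref{ma20} together with the elementary inequality $|\Lambda_\eta\Theta(E,h_E)|_{h_E}\,\omega_\eta^n\leq C\,\omega_{\wh X}\wedge\omega_\eta^{n-1}$ (whose integral is a bounded cohomological quantity), while \eqref{ineqpsi} and $|\zeta_\eta|\lesssim\log\frac1{|s_D|^2}$ give $\langle\Phi(\zeta_\eta)(D'\zeta_\eta),D'\zeta_\eta\rangle\geq\frac{c}{\log\frac1{|s_D|^2}}|D'\zeta_\eta|^2$; since $D'\zeta_\eta$ splits orthogonally (in Hilbert--Schmidt norm) into the pure-trace part $-\tfrac1r(\partial\rho_\eta)\Id_E$ and the traceless part $D's_\eta$, one has $|D's_\eta|^2\leq|D'\zeta_\eta|^2$, and the displayed inequality then gives the last estimate of \eqref{estconic1}. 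So Case~(1) implies the theorem.

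The real work — and the step I expect to be the main obstacle — is to rule out Case~(2), i.e.\ to establish the conic analogue of Lemma~\ref{weak}. Putting $u_i:=\delta_i\zeta_i$, multiplying \eqref{newcon211} and \eqref{ross300} by $\delta_i$ and using $\Tr S_i\geq r$ and the normalisation in Case~(2) gives $|u_i|\leq C(1+\delta_i\log\frac1{|s_D|^2})$, the analogue of \eqref{2ndcase2}. I would then follow \cite[pp.~885--886]{Sim88} and the proof of Lemma~\ref{weak}: on a fixed compact $K\Subset\wh X\setminus D$ the eigenvalues of $u_i$ are uniformly bounded for $i$ large, so the rescaled form of the differential inequality above yields $\int_K|D'u_i|^2\,dV_i\leq C$, and a diagonal/Rellich argument produces a weak $W^{1,2}$-limit $u_\infty$ on compacts of $\wh X\setminus D$ satisfying the limiting inequality of Lemma~\ref{weak}(2), now with respect to $\pi^\star\omega_{\rm orb}$ (using $\omega_\eta\to\pi^\star\omega_{\rm orb}$, property~(ii) of Section~\ref{coset}). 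The two points requiring genuine care in the conic setting are: that $\int_{U}|u_i|\,dV_i\to0$ as the neighbourhood $U\supset D$ shrinks, uniformly in $i$ — which follows from $\delta_i\to0$ and the uniform integrability of $\log\frac1{|s_D|^2}$ against $\omega_\eta^n$ (a consequence of $\omega_\eta^n=(\Psi_t\star\tau_\ep)\,\omega_{\wh X}^n$ with $\Psi_t\star\tau_\ep$ bounded in $L^{1+\delta_0}$, cf.\ \eqref{ma14}, and $\log\frac1{|s_D|^2}\in L^q$ for all $q$); and that $u_\infty\not\equiv0$, obtained as in Lemma~\ref{weak} after checking that $\delta_i\rho_i$ converges on compacts to a $\pi^\star\omega_{\rm orb}$-harmonic, hence smooth, limit (via the equation for $\rho_{\eta}$ multiplied by $\delta_i$). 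Once $u_\infty$ is in hand, the conclusion is purely algebraic and identical to \cite{Sim88}: its eigenvalues are constant a.e.\ and not all equal, and an appropriate eigenspace defines a subsheaf of $E$ destabilising with respect to $\pi^\star\omega_{\rm orb}$ — equivalently with respect to the Kähler class $\{\omega_X\}=\tfrac1C\{\omega_{\rm orb}\}$ — contradicting the stability of $\cF$. Hence only Case~(1) occurs and \eqref{estconic1} follows.
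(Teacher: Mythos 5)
Your overall architecture — the dichotomy between a uniformly bounded $L^1$ case and a renormalised blow-up case, the treatment of Case~(1) via \eqref{newcon211}, \eqref{ross300}, \eqref{ma20} and \eqref{ineqpsi}, and the construction of the weak limit $u_\infty$ in Case~(2) — matches the paper's proof. But there is a genuine gap at the very end of Case~(2), precisely where you write that the conclusion is ``purely algebraic and identical to \cite{Sim88}.'' It is not. Simpson's argument produces the destabilising conclusion only after one identifies the analytic quantity coming out of the limiting inequality, namely
$\int_{\wh X}\Tr\big(\pi_\gamma\Lambda\Theta(E,h_E)\big)\,dV-\int_{\wh X}|D''\pi_\gamma|^2\,dV$,
with the actual degree of the subsheaf $\wh\cF_\gamma$ cut out by the eigenspace projection $\pi_\gamma$. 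When the background form is a smooth Kähler metric this is the standard Chern--Weil formula; here the degree must be computed against $\pi^\star\omega_{\rm orb}$, which degenerates along $D$ and has conic-type singularities, while the induced metric on $\det\wh\cF_\gamma$ is itself singular along the singular set of $\wh\cF_\gamma$. The wedge product of these two singular objects is not a priori well defined, so the identity \eqref{deg1} requires proof. The paper supplies it by showing that the local weights of the induced metric on $\det\wh\cF_\gamma$ are quasi-psh (via the Gram determinant \eqref{deg4}), invoking Bedford--Taylor intersection theory together with the fact that $\omega_{\rm orb}=C\omega_X+\sqrt{-1}\ddbar\varphi$ with $\varphi$ \emph{bounded and continuous}, passing to the limit along a monotone smooth approximation of $\pi^\star\omega_{\rm orb}$ as in \eqref{II2}, and checking the finiteness \eqref{IIl2} of the second-fundamental-form term via \eqref{II1}. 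Without some version of this step, Case~(2) does not yield a contradiction with the $\omega_{\rm orb}$-stability of $\cF$, and the proof is incomplete.

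A secondary, related inaccuracy: your parenthetical ``equivalently with respect to the Kähler class $\{\omega_X\}=\tfrac1C\{\omega_{\rm orb}\}$'' presupposes that the slope of $\wh\cF_\gamma$ produced by the analysis is already known to be the cohomological one; that is exactly the content of \eqref{deg1} and cannot be assumed. Everything else in your write-up — the Case~(1) estimates, the uniform smallness of $\int_U|u_i|\,dV_i$ near $D$ from \eqref{ma14}, and the non-vanishing of $u_\infty$ — is correct and coincides with the paper's Lemma~\ref{weakconic} and the surrounding arguments.
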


\medskip

\noindent As in the proof of Theorem \ref{HE}, Theorem \ref{Estconic, I} shows that there exists a sub-sequence, $h_\eta$ converging to a metric $h$ of  
$\displaystyle E|_{\wh X \setminus D}$, which verifies the HE equation with respect to $\pi^\star\omega_{\rm orb}$. The fact that the singularities of the metric  
$\omega_{\rm orb}$ are compatible with those of $X$ (at least on an open subset) has the following consequence. 

\begin{thm}\label{reg}
Let $V\subset X$ be an open subset of $X$ such that there exists a finite cover $p: U \rightarrow  V =U/\Gamma$ with the property that $p^* (\omega_{orb})$ is smooth Kähler on $U$. 
Assume moreover that there exists a vector bundle $F$ on $U$ such that $F = p^\star(\cF)^{\star\star}$. Then the metric $p^\star h$ extends to a smooth Hermitian metric on $F$, which moreover satisfies the HE with respect to $p^\star(\omega_{orb})$.
\end{thm}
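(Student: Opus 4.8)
The plan is to pull $h$ back by $p$, observe that it is smooth and Hermite--Einstein away from a small analytic set, upgrade its a priori boundedness to a genuine quasi-isometry with a fixed smooth background metric on $F$, and then conclude by elliptic regularity.

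Write $W_U\subset U$ for the union of the ramification locus of $p$ and the preimage $p^{-1}$ of the locus where $\cF|_V$ fails to be locally free. Since $V=U/\Gamma$ has quotient singularities the finite group $\Gamma$ acts freely in codimension one, and since $F=p^\star(\cF)^{\star\star}$ is locally free on all of $U$, the set $W_U$ is analytic of codimension $\ge 2$ in $U$; moreover $p$ restricts to an unramified covering $U\setminus W_U\to V_0$, where $V_0\subset V\cap\xreg$ is the open set on which $\cF$ is a vector bundle and $F|_{U\setminus W_U}=p^\star(\cF|_{V_0})$. Because the Hermite--Einstein condition is local and invariant under local biholomorphisms, $p^\star h$ is a \emph{smooth} Hermitian metric on $F|_{U\setminus W_U}$ satisfying $\Lambda_{p^\star\omega_{\rm orb}}\Theta(F,p^\star h)=c\,\Id_F$ there, where by hypothesis $p^\star\omega_{\rm orb}$ is a smooth Kähler metric on $U$.

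The heart of the argument is to produce a fixed smooth metric $h_0$ on $F$ and a constant $C>0$ with $C^{-1}h_0\le p^\star h\le Ch_0$ on $U\setminus W_U$. First, exactly as in \eqref{ross201}, the pull-back of the reference metric $h_{\cF,0}$ compares with $h_0$ via $C^{-1}e^{N\phi_Z\circ p}h_0\le p^\star h_{\cF,0}\le Ch_0$ (here $\codim W_U\ge 2$ is used so that the maps defining $\cF|_V$ extend across $W_U$), while the estimates of Theorem \ref{Estconic, I}, transported to $\cF|_{X_0}$, give $\Tr\exp(s)\le C$ and $\exp(s)\ge Ce^{N\phi_Z}\Id_\cF$ together with an $L^2_{\rm loc}$ bound on the curvature; combining, $p^\star h$ is squeezed between $C^{-1}e^{2N\phi_Z\circ p}h_0$ and $Ch_0$, and in particular it is bounded above with locally $L^2$ curvature on $U$. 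To remove the degenerating factor $e^{N\phi_Z\circ p}$ from the lower bound I would run the proof of Corollary \ref{order0}, which is legitimate in the present setting by Remark \ref{orb} since $p^\star\omega_{\rm orb}$ is smooth Kähler and hence satisfies a local Sobolev inequality: the analogues of Theorems \ref{orbi, I} and \ref{orbi, II} (whose proofs use only the smoothness of the upstairs Kähler metric and the comparison just recorded) give $\Delta''_{p^\star\omega_{\rm orb}}\log|\mu|^2_{(p^\star h)^\star}\ge -C$ in the sense of distributions on $U$ for every local holomorphic section $\mu$ of $F^\star$, and Moser iteration then bounds $\sup|\mu|^2_{(p^\star h)^\star}$, whence $C^{-1}h_0\le p^\star h$ as well. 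Thus $p^\star h$ extends over $W_U$ as a bounded measurable metric on $F$ which is quasi-isometric to $h_0$.

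Finally, $p^\star h$ is a bounded metric on the holomorphic bundle $F$ over $U$ which is smooth and solves the Hermite--Einstein equation with respect to the smooth Kähler metric $p^\star\omega_{\rm orb}$ on the complement of the codimension-$\ge 2$ analytic set $W_U$, and which has locally $L^2$ curvature; by the standard interior regularity for Hermite--Einstein metrics with bounded local potential (Uhlenbeck--Yau \cite{UY86}, Bando--Siu \cite{BS94}) the equation extends across $W_U$ as an identity of currents and elliptic bootstrapping, carried out in the gauge supplied by $h_0$, propagates smoothness to all of $U$; by continuity the extended metric is Hermite--Einstein with respect to $p^\star\omega_{\rm orb}$ on $U$, and it is $\Gamma$-invariant, hence descends to the orbifold structure of $V$. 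The step I expect to be the main obstacle is the middle one --converting the a priori degenerate lower bound into a genuine two-sided quasi-isometry-- since it is exactly there that one needs the Laplacian estimate for sections of $F^\star$ together with a Sobolev inequality for $p^\star\omega_{\rm orb}$; once this is available, the remaining regularity is routine.
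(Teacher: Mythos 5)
Your proposal is correct and follows essentially the same route as the paper: the two-sided quasi-isometry $C^{-1}h_0\le p^\star h\le Ch_0$ is obtained exactly as in the paper via Remark \ref{orb} (i.e.\ the Laplacian estimates of Theorems \ref{orbi, I}--\ref{orbi, II} plus Moser iteration, legitimate because $p^\star\omega_{\rm orb}$ is smooth Kähler), and the removable-singularity step across the codimension-$\ge 2$ set is the same in substance. The only difference is cosmetic: where you invoke Bando--Siu/Uhlenbeck--Yau regularity as a black box, the paper spells out the needed input, namely the local $L^2$ bound on $D'H$ obtained from holomorphic sections of the trivialized $F$ together with the vanishing of the cut-off integrals \eqref{rev16}.
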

\medskip

\noindent We first discuss the proof of Theorem \ref{Estconic, I}.

\subsection{Proof of Theorem \ref{Estconic, I}}  After the preparations above, the arguments are very similar to those already discussed in Section \ref{smoothset}. We will therefore go through the proof again, but only provide a detailed arguments when necessary.  
\smallskip

\noindent Set $\displaystyle \Phi(x, y):= \frac{\exp(x-y)- 1}{x-y}$. Since $\omega_\eta$ is smooth and the metric $h_E\circ H_\eta$ is HE, Lemma \ref{W1} shows that the inequality
	\begin{equation}\label{altconic5}
		0 \geq  
		\int_{\wh X}\left\langle \Phi(\log H_\eta)(D' \log H_\eta), D' \log H_\eta\right\rangle dV_\eta+ \int_{\wh X} \Tr \big(\log H_\eta\cdot  \Lambda_{\eta}  \Theta(E, h_0)\big)dV_\eta
	\end{equation}
holds for every $\eta$. 
\medskip

\noindent As $\eta$ tends to zero, two cases can occur:

\begin{enumerate}
	\smallskip
	
	\item[(1)] There exists a constant $C> 0$ such that 
	$$\int_{\wh X}(|\rho_\eta| +  \log {\Tr S_\eta}) dV_\eta \leq C$$
	for all $\eta$ with positive and small enough components.
	\smallskip
	
	\item[(2)] There exist sequences $(\delta_i)_{i\geq 1}$ and  $(\eta_i)_{i\geq 1}$ of parameters
	converging to zero such that 
	$$\int_{\wh X} (|\delta_i \rho_i| + \delta_i \log {\Tr S_i})dV_i = 1$$
We denote by $dV_i$ is the
	volume element corresponding to the metric 
	$\displaystyle \omega_{\eta_i}$.
\end{enumerate}

\subsubsection{The analysis of the case $(1)$} Since $h_E\circ H_\eta$ is HE with respect to $\omega_\eta$, we
 have
\begin{equation}\label{new3conic6}
\max\left\{C\log |s_D|^{2}+ \rho_{\eta}, -\rho_\eta\right\}\leq C, \qquad \Tr (H_\eta)\leq C
\end{equation}
point-wise on $\wh X$, as consequence of \eqref{ross200} and \eqref{newcon211}.

 For this we also use the fact that the trace of $s_\eta$ is equal to zero. Again, the constant 
in \eqref{new3conic6} is independent of $\eta$. 
\medskip

\noindent We combine \eqref{new3conic6} with the inequality \eqref{altconic5},
so we get
\begin{equation}\label{1stcase3conic}
	\int_{\wh X}\left\langle \Phi(\log H_\eta)(D' \log H_\eta), D'\log H_\eta\right\rangle dV_\eta
	\leq
	C\int_{\wh X} \big(1+ \log\frac{1}{|s_D|^2}\big) |\Lambda_{\omega_\eta}\Theta(E, h_0)|dV_\eta
\end{equation}

\noindent By \eqref{ma20}, the RHS of \eqref{1stcase3conic} is uniformly bounded. Then
$$	\int_{\wh X}\left\langle \Phi(\log H_\eta)(D' \log H_\eta), D'\log H_\eta\right\rangle dV_\eta \leq C$$
for some constant $C$ independent of $\eta$.
\noindent Moreover, the estimate \eqref{new3conic6} and the inequality for \eqref{ineqpsi} show that 
\begin{equation}\label{1stcase4conic}
	\left\langle \Phi(\log H_\eta)(D'\log H_\eta), D'\log H_\eta\right\rangle 
	\geq \frac{C}{\log\frac{1}{|s_D|^2}}|D'\log H_\eta|^2
\end{equation}
holds, provided that the positive constant $C\ll 1$ is small enough.
\smallskip

\noindent All in all, we obtain
\begin{equation}\label{1stcase5conic}
	\int_{\wh X}
	\frac{1}{\log\frac{1}{|s_D|^2}}|D'\log H_\eta|^2dV_\eta\leq C, \end{equation}
which completes the proof of Theorem \ref{Estconic, I}.

\subsubsection{The analysis of the second case} We show that this cannot occur. By using the estimates 
\eqref{ross300} and \eqref{newcon211}, we we infer that the inequality
\begin{equation}\label{2ndcaseconic2}
	|u_i|\leq C+ \delta_i\log\frac{1}{|\sigma_D|^2}
\end{equation}
holds true pointwise on $\wh X$.
\medskip

\noindent The following result (analogue of Lemma 5.4 in \cite{Sim88}) was already discussed in Section \ref{smoothset} and exactly the same arguments apply. Therefore, we will not repeat the proof here.
\begin{lemme}\label{weakconic} There exist a subsequence of $(u_i)_{i\geq 1}$ converging weakly to a limit
	$u_\infty$ on compact subsets of $\wh X\setminus D$ such that the following hold.
	\begin{enumerate}
		\smallskip
		
		\item[\rm (1)] The endomorphism $u_\infty$ is non-identically zero and it belongs to the space $H^1$ (i.e. it is in $L^2$ together with its differential).
		\smallskip
		
		\item[\rm (2)] Let $\Psi:\R\times \R\to \R_+$ be a smooth, positive function such that $\displaystyle \Psi(a, b)< \frac{1}{a-b}$ holds for any
		$a> b$. Then we have 
		$$0 \geq  
		\int_{\wh X}\left\langle \Psi(u_\infty)(D'u_\infty), D'u_\infty\right\rangle dV_{\rm orb}+ \int_{\wh X} \Tr \big(u_\infty \Lambda_{0}\Theta(E, h_E)\big)dV_{\rm orb}$$
		where $\Lambda_0$ and $dV_{\rm orb}$ are the contraction with and the volume element corresponding to $\pi^\star\omega_{\rm orb}$, respectively. 
	\end{enumerate}
\end{lemme}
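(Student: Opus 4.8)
\medskip

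\noindent We indicate how the proof goes, which follows verbatim the argument of Lemma~\ref{weak}, itself modelled on \cite[pp.~885--888]{Sim88}; the only changes are that the fixed K\"ahler metric $\omega_t$ is replaced by the family of smooth metrics $\omega_\eta$ on $\wh X$ produced in Section~\ref{MAM}, and the reference class $\pi^\star\omega_X$ by $\pi^\star\omega_{\rm orb}$. The two substitutes for the ingredients used in the smooth case are: the uniform mean--value inequality (iii)--(iv) of this section, already encoded in the pointwise bound \eqref{2ndcaseconic2}, namely $|u_i|\le C+\delta_i\log\frac{1}{|\sigma_D|^2}$ on $\wh X$; and the integral estimate \eqref{ma20}, which bounds $\int_{\wh X}\log\tfrac1{|\sigma_D|^2}\,|\Lambda_\eta\Theta(E,h_0)|_{h_0}\,\omega_\eta^n$ uniformly in $\eta$. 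From \eqref{2ndcaseconic2} and $\delta_i\to0$ one reads off that the eigenvalues of $u_i$ are uniformly bounded on every compact $K\subset\wh X\setminus D$, that $(u_i)$ is uniformly bounded in $L^2(\wh X,\omega_\eta)$, and that for each $\ep>0$ there is a neighbourhood $U_\ep\supset D$ with $\int_{U_\ep}|u_i|\,dV_\eta<\ep$ for every $i$.

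First I would rescale the differential inequality \eqref{altconic5} by $\delta_i$. Using the explicit shape of $\Phi$, the properties of $\Psi$, and the bound \eqref{2ndcaseconic2}, the rescaled inequality yields, for every compact $K\subset\wh X\setminus D$ and all $i\ge i_K$,
$$\int_K\bigl\langle\Psi(u_i)(D'u_i),D'u_i\bigr\rangle\,dV_\eta+\int_{\wh X}\Tr\bigl(u_i\,\Lambda_\eta\Theta(E,h_E)\bigr)\,dV_\eta\le0,$$
the curvature integral being finite uniformly in $i$ thanks to \eqref{2ndcaseconic2} and \eqref{ma20}. On $K$ the eigenvalues of $u_i$ are bounded, hence $\Psi(u_i)$ is bounded below there and $\int_K|D'u_i|^2\,dV_\eta\le C_K$; a diagonal argument then extracts a subsequence with $u_i\rightharpoonup u_\infty$ weakly in $W^{1,2}_{\rm loc}(\wh X\setminus D)$ and, by Rellich, $u_i\to u_\infty$ in $L^2_{\rm loc}(\wh X\setminus D)$, the uniform $L^2$ bound giving $u_\infty\in H^1$ with respect to $\pi^\star\omega_{\rm orb}$. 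To obtain (2) one passes to the limit in the displayed inequality, using lower semicontinuity of the nonnegative Dirichlet functional under weak $W^{1,2}$ convergence, the $L^2_{\rm loc}$ convergence of $u_i$, the locally uniform convergence $\omega_\eta\to\pi^\star\omega_{\rm orb}$ from property (ii), and once more \eqref{ma20} together with the smallness of $u_i$ near $D$ to see that no mass of the curvature integral escapes to $D$; letting $K$ exhaust $\wh X\setminus D$ yields the asserted inequality.

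It remains to show $u_\infty\not\equiv0$, the only point where the normalisation of case~(2) enters. Taking the trace of $u_i=-\frac{\delta_i}{r}\rho_i\otimes\Id_E+\delta_is_i$ gives $\Tr u_i=-\delta_i\rho_i$; combining $\Tr s_i=0$, concavity (yielding $\delta_i\log\Tr(S_i)\le\delta_i\log r+r|u_i|+\delta_i|\rho_i|$) and the case~(2) normalisation, one gets
$$r\int_{\wh X}|u_i|\,dV_\eta\ \ge\ 1-\delta_i\log r-2\int_{\wh X}\delta_i|\rho_i|\,dV_\eta.$$
If $u_\infty\equiv0$, then $\delta_i\rho_i=-\Tr u_i\to0$ in $L^2_{\rm loc}(\wh X\setminus D)$; the uniform two-sided bounds \eqref{newcon211} (splitting every integral over $U_\ep$ and its complement) force $\delta_i\int_{\wh X}|\rho_i|\,dV_\eta\to0$ and $\int_{\wh X}|u_i|\,dV_\eta\to0$, so the left-hand side above tends to $0$ while the right-hand side tends to $1$ --- a contradiction. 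Hence $u_\infty\not\equiv0$.

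The main obstacle is keeping every integral estimate uniform in $\eta$ although, unlike in Section~\ref{smoothset}, there is no uniform lower bound $\omega_\eta\ge c\,\pi^\star\omega_{\wh X}$: the background metric $\pi^\star\omega_{\rm orb}$ is degenerate on $\wh X$, having both zeros and poles along the exceptional divisor, so the control of the curvature term near $D$ and of the volumes $\Vol_\eta(U_\ep)$ has to be squeezed out of \eqref{ma20}, the uniform $L^{1+\delta_0}$ bound on the Monge--Amp\`ere densities (Lemma~\ref{lp} and \eqref{new51}), and the two-sided estimate \eqref{newcon211}, rather than from naive pointwise comparisons. Once Lemma~\ref{weakconic} is in place, the two concluding steps are identical to the smooth case: the inequality in (2), applied to suitable functions $\Psi$, forces the eigenvalues of $u_\infty$ to be a.e.\ constant and not all equal, and an appropriate eigenspace of $u_\infty$ defines a subsheaf of $E$ destabilising with respect to $\pi^\star\omega_{\rm orb}$, hence --- after pushing forward to $X$ --- a subsheaf of $\cF$ destabilising with respect to $\omega_{\rm orb}$, contradicting the stability hypothesis and ruling out case~(2).
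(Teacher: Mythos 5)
Your proposal is correct and follows essentially the same route as the paper, which simply declares that the argument of Lemma~\ref{weak} (itself adapted from \cite[pp.~885--888]{Sim88}) carries over verbatim: the truncation near $D$ via the pointwise bound on $|u_i|$, the rescaled differential inequality, the local $W^{1,2}$ bounds with Rellich, and the non-vanishing of $u_\infty$ via the case~(2) normalisation and concavity. Your explicit identification of the two conic-specific substitutes --- the uniform estimate \eqref{ma20} for the curvature term and the locally uniform convergence $\omega_\eta\to\pi^\star\omega_{\rm orb}$ --- is exactly what the paper leaves implicit.
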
 

\medskip

\noindent Next, Lemma \ref{weakconic} combined with the arguments of \cite{Sim88} imply that  the eigenvalues of $u_\infty$ are constant a.e. on $\wh X$. 
We show next that an appropriate eigenspace of $u_\infty$ defines a destabilising subsheaf of $E$. Due to the singularities of $\omega_{\rm orb}$, a few additional arguments are to be invoked in order to make sure that the usual approach goes through. This is done in the rest of the current section.
\smallskip
 
 \noindent Let $\lambda_1,\cdots, \lambda_r$ be the eigenvalues of $u_\infty$. Fix a gamma $\gamma$. Let $p_\gamma :\mathbb R\rightarrow \mathbb R$ be the function such that $p_\gamma (\lambda_i)=1$ for $\lambda_i <\gamma$ and $p_\gamma (\lambda_i)=0$ for $\lambda_i > \gamma$. The projection $\pi_\gamma :=p_\gamma (u_\infty)$ defines a $L^2_1$-subbundle of $E$. 
Now by using Uhlenbeck-Yau, we know that  $\pi_\gamma |_{\wh X\setminus D}$ is holomorphic.  The map $\pi_\gamma$ defines a meromorphic map $\wh X\setminus D \dashrightarrow G (a, r)$, where $G (a, r)$ is the Grassmann manifold associated to $E$, where $a$ is the rank of $\pi_\gamma$.  Then  $\wh X\setminus D \dashrightarrow G (a, r)$ can be extend to be holomorphic morphism outside some subvariety of codimension $2$ in $\wh X$. It defines thus a subsheaf $\wh \cF_\gamma \subset E$ on $\wh X$ such that $E/ \wh \cF_\gamma$ is torsion free.
\smallskip

To finish the proof, it remains to show that there exists a $\gamma$ such that 
\begin{equation}\label{deg} 
\mu_{\omega_{\rm orb}} (\wh\cF_\gamma) \geq \mu_{\omega_{\rm orb}} (E), \qquad \rk \wh\cF_\gamma <\rk E.
\end{equation}
Note that $\pi_\gamma$ is the projection of $E$ to the subsheaf $\wh\cF_\gamma$.
The important remark at this point is that the existence of such $\gamma$ follows from \cite{Sim88}, provided that we are able to justify the formula
\begin{equation}\label{deg1}
c_1 (\wh \cF_\gamma)\wedge \pi^*\omega_{\rm orb} ^{n-1}= \int_{\wh X} \Tr \big(\pi_\gamma \Lambda_{\omega_{\rm orb}} \Theta(E, h_E)\big) \pi^* \omega_{\rm orb} ^n -  \int_{\wh X }  |D'' \pi_\gamma|^2 \pi^* \omega_{\rm orb} ^n.
\end{equation}
This is standard if we replace $\pi^* \omega_{\rm orb}$ with a smooth metric, but we note that the inverse image $\pi^* \omega_{\rm orb}$ has both zeroes and 
poles. We show that it still holds in our case basically because $\omega_{\rm orb}= \omega_X+ \sqrt{-1}\ddbar \varphi$, where $\varphi$ is 
a continuous function on $X$.  
\medskip

\noindent The first thing to remark is that the formula \eqref{deg1} simply asserts that in case of a saturated subsheaf
\begin{equation}\label{deg2}
0\to \wh\cF_\gamma \to E
\end{equation}
one can still computing the degree with respect to the singular metric $\pi^* \omega_{\rm orb}$ by the usual Chern-Weil representative with respect to the induced metric on $\wh\cF_\gamma$ restricted to the set $\wh X\setminus \Sigma$. The "danger" here is that the form given by the Chern-Weil theory is singular, and so is the metric $\pi^\star \omega_{\rm orb}$: in general, their wedge product is not well defined. However, 
in our specific case this is not happening as we next show.
\smallskip

\noindent Let $x_0\in \Sigma\in U$ be a point in the singular set of $\wh\cF_\gamma$ together with an open coordinate subset $U\subset X$. It is a standard fact that there exists a \emph{finite}
set of sections $s^i:= (s^i_1,\dots, s^i_r)_{i=1,\dots, N}$ of $\wh\cF_\gamma|_U$ such that for each point $p$ of $U\setminus \Sigma$ one of the 
$s^i$ above is a frame at the said point. 

\noindent Let $s\in \{s^i\}$ be one of the frames above; the metric induced on $\wh\cF_\gamma$ is given by the coefficients
\begin{equation}\label{deg3}
g_{\beta\ol\alpha}:= \langle \iota(s_\beta), \iota(s_\alpha)\rangle
\end{equation}
so if we write $\displaystyle \iota(s_\alpha)= \sum_{k=1}^m f^k_\alpha e_k$ for some homomorphic functions $f^k_\alpha$ (here $m$ is the 
rank of $E$ and $(e_k)$ is a holomorphic frame), such that the rank of the matrix $\displaystyle (f^k_\alpha)_{k, \alpha}$ is maximal (i.e. $r$) on $U\setminus \Sigma$ locally near the point $p$. It follows that we can write 
\begin{equation}
g_{\beta\ol\alpha}= \sum_{k, l}f^l_\beta\ol f^k_\alpha h_{l\ol k}
\end{equation}
and by choosing $(e_k)$ to be a normal frame, we see that the local weight of the metric induced on the line bundle 
$$L:= \det(\wh\cF_\gamma)$$ 
has singularities of type $\displaystyle \log\sum |g_i|^2$.

\noindent Indeed this can be seen as follows. According to the formula for the Gramm determinant, we have
\begin{equation}\label{deg4}
\det(g_{\beta\ol\alpha})= \big|\iota(s_1)\wedge \dots\wedge \iota(s_r)\big|^2_{\Lambda^rh_E}
\end{equation}
from which we infer that minus of the local weights of the metric for the determinant of $\cF$ (i.e. $dd^c\log\det(g_{\beta\ol\alpha})$) are quasi-psh. 

\noindent In general, let
$T$ be any closed positive current of bidegree $(1,1)$ defined on an open subset $\Omega \subset \CC^n$ and let $\varphi$ be a bounded quasi-psh function on $\Omega$. By the results of Bedford-Taylor, the intersection products
\begin{equation}
T\wedge (dd^c\varphi)^p\wedge (dd^c\Vert z\Vert^2)^q
\end{equation}
are well-defined and moreover they have very good monotonicity properties, cf. e.g. \cite[Chapter 3, Section 3]{Dem}.
Coming back to our case, integrals of type
\begin{equation}\label{deg5}
 \int_{(U, x_0)} dd^c\log\det(g_{\beta\ol\alpha})\wedge (dd^c\varphi)^p\wedge dd^c\Vert z\Vert^2
\end{equation}
are convergent. This already shows that we can compute the degree by using the metric $\pi^* \omega_{\rm orb}$, since we have the equality
\begin{equation}\label{II2}
\int_{\wh X}\theta_L\wedge \pi^\star \omega_{\rm orb}^{n-1}= \lim_{\ep\to 0}\int_{\wh X}\theta_L\wedge \omega_{\ep}^{n-1}
\end{equation}
where $\omega_{\ep}\in \{\pi^\star \omega_{\rm orb}+ \ep\omega_{\wh X}\}$ is smooth and it converges towards $\pi^\star \omega_{\rm orb}$ in such a way that the potentials 
are monotonic, cf. \cite[Thm 13.12]{Dem09} (Here we used the fact that the potentials of $\pi^\star \omega_{\rm orb}$ are continuous. Then we can ask the approximating currents are smooth).
Moreover, we can assume that on the pre-image of the set $\Omega\subset X$ (cf. Section 6) the sequence $(\omega_{\ep})_{\ep> 0}$ is stationary, equal to the inverse image of $\omega_X$ modulo a bounded form  
converging to zero.
\smallskip

\noindent In \eqref{II2} we denote by $\theta_L$ the curvature form of $L= \det(\wh\cF_\gamma)$ induced by the 
metric \eqref{deg3}. Therefore, the total mass of the $(n,n)$-current $\theta_L\wedge \pi^\star \omega_{\rm orb}^{n-1}$
equals the degree of $\wh \cF$ with respect to $\pi^\star \omega_{\rm orb}$.

 \medskip

\noindent Next we recall the formula
\begin{equation}\label{II1}
\Theta_{h_{\wh \cF}}= \pi_\gamma  \Theta(E, h_E)|_{\wh \cF}+ \beta^\star\wedge \beta
\end{equation}
which holds pointwise in the complement of the singularities of $\wh\cF$.
Here we denote by $\beta$ the $(1,0)$--form with values in $\Hom(\wh\cF, E)$
whose adjoint is the second fundamental form $\dbar \pi_\gamma$. 

\noindent By taking the trace we get a $(1,1)$ positive current 
\begin{equation}\label{II3}
-\Tr(\beta^\star\wedge \beta)
\end{equation}
which is in $L^1(\wh X, \wh \omega)$ and we claim that the relation 
\begin{equation}\label{IIl2}
\int_{\wh X}\Tr(\beta^\star\wedge \beta)\wedge \pi^\star \omega_{\rm orb}^{n-1}<\infty
\end{equation}
holds. Indeed, the fact that the (improper) integral on the LHS of \eqref{IIl2} is finite can be seen as consequence of the 
formula \eqref{II1}, since the projection $\pi_\gamma$ is a bounded map. 
\medskip

\noindent It then follows that we have 
\begin{equation}\label{degcep}
\int_{\wh X} \theta_{L}\wedge \pi^\star \omega_{\rm orb}^{n-1}
= \int_{\wh X} \Tr \big(\pi_\gamma  \Theta(E, h_E)\big) \wedge  \pi^\star \omega_{\rm orb}^n -  \int_{\wh X}\Tr(\beta^\star\wedge \beta)\wedge \pi^\star \omega_{\rm orb}^{n-1},	
\end{equation}
and in conclusion
\eqref{deg1} is proved, since we have 
$$\int_{\wh X}\Tr(\beta^\star\wedge \beta)\wedge \pi^\star \omega_{\rm orb}^{n-1}= 
\int_{\wh X }  |D'' \pi_\gamma|^2 \pi^* \omega_{\rm orb} ^n,$$
and moreover, the LHS of \eqref{degcep} equals precisely the degree of $L$ with respect to $\pi^\star \omega_{\rm orb}$, cf. \eqref{II2}.


\medskip

\noindent The rest of the arguments in \cite{Sim88} apply \emph{mutatis mutandis} and the proof of Theorem \ref{Estconic, I} is finished.

\medskip

\subsection{Proof of Theorem \ref{reg}} The basic idea goes back at least to the 
work of Li-Tian, \cite{LT19}. 
Let $\Sigma\subset U$ be the inverse image of the singular loci (of $\cF$ and $X$, it has in any case codimension at least two). We have already seen in Remark \ref{orb} that if we write $\displaystyle \pi^\star H= H_0\exp(\eta)$, then
\begin{equation}\label{rev12}
\sup_{U\setminus \Sigma}|\eta|\leq C< \infty,
\end{equation}
where $H_0$ is a fixed, smooth metric on $F$. 
\smallskip

\noindent Let $\tau$ be any holomorphic section of $F$. We have the formula
\begin{equation}\label{rev13}
  \sqrt{-1}\ddbar |\tau|^2_H= \sqrt{-1}\langle D'_H\tau, D'_H\tau\rangle-
 \sqrt{-1}\langle \Theta(F,H)\cdot \tau,\tau\rangle 
\end{equation}
that we wedge with the $g:= \pi^\star \omega_{\rm orb}$ raised to the power $n-1$.
It follows that we have the equality
\begin{equation}\label{rev14}
  \Delta_g|\tau|^2_H= \Lambda_g\sqrt{-1}\langle D'_H\tau, D'_H\tau\rangle+
 \gamma |\tau|^2_H
\end{equation}
at each point of $U\setminus \Sigma$, where $\gamma$ is a constant.

\noindent It follows that $D'_H\tau$ is in $L^2$ (by multiplying with a truncation function, precisely as we did pages 14-15 in section 6) and since $F|_U$ is trivial, we can construct sufficiently many sections $\tau$ so as to infer that $D'H$ is in $L^2$, and so is $D' _H \tau$. Then the point-wise equation which $H$ satisfies in $U\setminus \Sigma$ extends to $U$ in weak sense, and from this point we are using the elliptic theory. Indeed, the Hermite-Einstein equation is 
\begin{equation}\label{rev15}
\Lambda_g\dbar\big( (D'H)\circ H^{-1}\big)= f
\end{equation}
on $U\setminus \Sigma$. Since we already know that $D' _H \tau$ is $L^2$-integrable, it is sufficient to show that 
\begin{equation}\label{rev16}
\lim_{\ep\to 0}\int_U\sqrt{-1}\partial \Xi_\ep\wedge \dbar \Xi_\ep\wedge \omega_{\rm euc}^{n-1}=0
\end{equation}
This follows from \eqref{conc9}.

\section{Partial results towards Campana-Höring-Peternell conjecture}

Let $X$ be a $3$-dimensional compact Kähler normal space with klt singularities. Throughout this section we denote by $\Sigma \subset X$ the union of the singular loci of $X$ and $\cF$, and let $Z\subset X$ be the locus such that $X\setminus Z$ has locally quotient singularities.  Then $Z$ is either empty, or a finite number of points $(p_i)$ in $X$. We suppose that $\cF$ is stable with respect to a Kähler metric 
$\omega_X$. 
\medskip

\noindent In this section we denote by $h= h_\cF$ the metric obtained in Theorem \ref{HEquot}, which verifies the Hermite-Einstein
equation on $X_0$ and such that it has quotient singularities in the complement of a fixed open nbd $U$ of $Z$. 
Our first goal in the current section is to show that following holds.

\begin{thm}\label{part-11} We assume that the singular set $\Sigma$ of $X$ is the disjoint union of $Z\cup \Sigma_0$, where $\Sigma_0$ is closed and $X$ has quotient singularities at each point of $\Sigma_0$. Then we have 
$$\int_{X\setminus \Sigma}\Delta(\cF, h)\wedge (\omega_{\rm orb}+ \ddc\phi)\geq 0,$$
where the function $\phi$ is such that the support of the form $\omega_{\rm orb}+ \ddc\phi$ is contained in the complement of $U$.  
\end{thm}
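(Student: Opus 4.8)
The plan is to follow the proof of Theorem~\ref{Chern1}(3), the decisive new input being that, by Theorem~\ref{HEquot}(i) together with Theorem~\ref{reg}, the metric $h$ is \emph{orbifold-regular} on $X\setminus U$: under the local uniformizations of the quotient singularities there it pulls back to a genuine smooth Hermitian metric. Set $\alpha:=\omega_{\rm orb}+\ddc\phi$ and $X_0:=X\setminus\Sigma$. By hypothesis $\Supp\alpha\subset X\setminus U$, and since $Z\subset U$ consists of the non-quotient points (and we may assume $\Sigma_0\subset X\setminus U$, the remaining case being easier because $\alpha\equiv0$ near $\Sigma_0\cap U$), a neighbourhood of $\Supp\alpha$ carries an orbifold structure for which $\cF$, $h$, $\omega_{\rm orb}$ and $\alpha$ are all orbifold-smooth. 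In particular $\Delta(\cF,h)$ extends across $\Sigma_0$ as a smooth, $d$-closed orbifold $(2,2)$-form, so $\Delta(\cF,h)\wedge\alpha$ is orbifold-smooth near $\Sigma_0$ and vanishes identically near $Z$; hence $\int_{X\setminus\Sigma}\Delta(\cF,h)\wedge\alpha$ converges absolutely. (Integrability near $\Sigma$ is in any case covered by the argument of Theorem~\ref{Chern1}(1), i.e. by $|\Delta(\cF,h)|_{\omega_X}\le C|\Theta(\cF,h)|^2$ and Theorem~\ref{HEquot}(iii).)

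For $\ep>0$ I would introduce the truncation $\Xi_\ep$ obtained as the product of the standard $\log\log$-cutoffs attached to $Z$ and to $\Sigma_0$, so that $0\le\Xi_\ep\le1$, $\Xi_\ep\to1$ pointwise on $X_0$, and $\Xi_\ep\,\Delta(\cF,h)$ has compact support in the manifold $X_0$. Writing $\alpha=\omega_{\rm orb}+\ddc\phi$ one then splits
\begin{equation*}
\int_{X_0}\Xi_\ep\,\Delta(\cF,h)\wedge\alpha=\int_{X_0}\Xi_\ep\,\Delta(\cF,h)\wedge\omega_{\rm orb}+\int_{X_0}\Xi_\ep\,\Delta(\cF,h)\wedge\ddc\phi .
\end{equation*}
The first summand is $\ge0$ for every $\ep$: the Hermite--Einstein condition for $h$ with respect to $\omega_{\rm orb}$ (Theorem~\ref{HEquot}(i)) gives, exactly as in the proof of Theorem~\ref{Chern1}(3), the pointwise inequality $\Delta(\cF,h)\wedge\omega_{\rm orb}\ge0$ on $X_0$ (here $n=3$, so $\omega_{\rm orb}^{n-2}=\omega_{\rm orb}$).

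The crux is to show that the second summand tends to $0$ as $\ep\to0$. Since $\Delta(\cF,h)$ is $d$-closed on $X_0$, and since $\partial(\Xi_\ep\,\partial\phi)$ has bidegree $(2,0)$ while $\Delta(\cF,h)$ has bidegree $(2,2)$, so that their wedge product vanishes on the threefold $X$, Stokes' theorem applied to the compactly supported form $\Xi_\ep\,\Delta(\cF,h)\wedge\partial\phi$ yields
\begin{equation*}
\int_{X_0}\Xi_\ep\,\Delta(\cF,h)\wedge\ddc\phi=\pm\sqrt{-1}\int_{X_0}\Delta(\cF,h)\wedge\dbar\Xi_\ep\wedge\partial\phi .
\end{equation*}
Here $\phi$ is smooth near $Z$ (there $\ddc\phi=-\omega_{\rm orb}$ is a smooth form with smooth local potential, by Lemma~\ref{omorb}), and I would choose it to vanish to order $\ge2$ at the points of $Z$; near $\Sigma_0$ it is orbifold-smooth. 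The support of $\dbar\Xi_\ep$ shrinks to $\Sigma$. Near $Z$ one bounds $|\dbar\Xi_\ep\wedge\partial\phi|\le C$, so this contribution is $\le C\int_{\Supp\dbar\Xi_\ep}|\Theta(\cF,h)|^2\,dV\to0$ by the $L^2$-bound of Theorem~\ref{HEquot}(iii), precisely as in Theorem~\ref{Chern1}(3). Near $\Sigma_0$ the form $\Delta(\cF,h)\wedge\partial\phi$ is orbifold-smooth, hence bounded, whereas $\int_{\Supp\dbar\Xi_\ep}|\dbar\Xi_\ep|\,\omega_{\rm orb}^{n}\to0$ by the computation with the $\log\log$-cutoff already carried out for \eqref{conc9}. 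Therefore the second summand $\to0$; letting $\ep\to0$ in the displayed splitting, the left-hand side tends to $\int_{X\setminus\Sigma}\Delta(\cF,h)\wedge\alpha$ while the right-hand side tends to $\lim_\ep\int_{X_0}\Xi_\ep\,\Delta(\cF,h)\wedge\omega_{\rm orb}\ge0$, which proves the theorem.

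The main obstacle is this last integration by parts: one must make Stokes legitimate despite the singularities of $\Delta(\cF,h)$ along $\Sigma$ and of $\omega_{\rm orb}$ along $\Sigma_0$. Near $Z$ this is the delicate point already handled in the proof of Theorem~\ref{Chern1}(3), and rests on the finiteness of the $L^2$-norm of the curvature. Near $\Sigma_0$ it is genuinely new, and it works \emph{only because} $h$ is orbifold-regular there --- which is exactly the extra information that Theorem~\ref{HEquot} provides over Theorem~\ref{HE}, and the reason the statement features $\omega_{\rm orb}$ rather than $\omega_X$.
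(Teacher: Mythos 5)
Your proposal is correct and follows essentially the same route as the paper: pointwise positivity of $\Delta(\cF,h)\wedge\omega_{\rm orb}$ from the Hermite--Einstein condition, orbifold-regularity of $h$ near $\Sigma_0$ via Theorem~\ref{reg}, and the integration by parts justified as in Theorem~\ref{Chern1}(2) together with the $L^2$ curvature bound to show the $\ddc\phi$ contribution vanishes in the limit. The only cosmetic difference is that you also truncate near $\Sigma_0$, whereas the paper truncates only near $Z$ and treats a neighbourhood of $\Sigma_0$ directly as an orbifold on which $\Delta(\cF,h)$ is a smooth $\dbar$-closed $(2,2)$-form, so Stokes applies there without a cutoff.
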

\medskip

\begin{remark}
As a consequence of the results in \cite{GK20}, the inequality in Theorem \ref{part-11} states that the orbifold Chern 
classes corresponding to $\cF$ are satisfying the Bogomolov inequality.   
\end{remark} 

\begin{remark}
Actually, it is conjectured in \cite{CHP22} that such a result should hold for any klt space, regardless to its dimension and the structure of its singular points. The arguments proving Theorem \ref{part-11}
equally show that in order to establish the 3-dimensional version of this conjecture it would suffice to
establish a version of the usual regularisation result for $W^{1,1}$-Sobolev spaces. 
\end{remark}
\smallskip

\begin{proof}
\noindent We note that $h$ is smooth in the complement of $\Sigma$, and it is induced by 
an orbifold metric in the complement of an open set $U$ containing the set of points $(p_i)$ (e.g. the union of "coordinate" sets centred at these points). Moreover, according to our hypothesis,
we can assume that 
\begin{equation}\label{bmy-1}
Z\subset U\subset X\setminus \Sigma_0.
\end{equation}
\noindent Moreover, we can assume that 
\begin{equation}\label{bmy-2}
\omega_{\rm orb}|_U= \omega_X,
\end{equation}
this is actually the only part in our arguments where \eqref{bmy-1} is needed.
\smallskip

\noindent Since $h$ is HE with respect to $\omega_{\rm orb}$, the inequality
\begin{equation}\label{pointHE}\Delta (\cF, h) \wedge 
	\omega_{\rm orb} \geq 0
\end{equation}
holds pointwise (as top forms) on $X\setminus \Sigma$. Moreover, since $\omega_{\rm orb}$ is of conic singularity near $\Sigma_0$, thanks to Theorem \ref{reg}, in the neighbourhood of $\Sigma_0$, $\Delta (\cF, h)$ is the push-forward of a smooth $\dbar$-closed $(2,2)$-form from its local uniformisation.  
In particular, $\Delta (\cF, h)$ is well defined and $\dbar$-closed when restricted to the support of the form $\omega_{\rm orb} +\ddc\phi$ (which contains $\Sigma_0$), and moreover satisfies 
\begin{equation}\label{pointHE1}
	\Delta (\cF, h) \wedge \omega_{\rm orb} \geq 0 
\end{equation}
locally near $\Sigma_0$. Here we use the assumption that $Z$ and $\Sigma_0$ are disjoint.
\smallskip

\noindent We also have
\begin{equation} \int_{X\setminus \Sigma}\Delta (\cF, h)\wedge\big(\omega_{\rm orb}+ \sqrt{-1}\ddbar\phi\big) = \int_{X}\Delta (\cF, h)\wedge\big(\omega_{\rm orb}+ \sqrt{-1}\ddbar\phi\big)
\end{equation}
because $\Delta (\cF, h)$ is in $L^1(X, \omega_{\rm orb})$. 

\noindent Without loss of generality we can assumed that the condition $\phi= \O(\Vert z\Vert^2)$ holds at each of the $p_i$ and that 
\begin{equation}\label{bmy9}\sup_{X\setminus Z}|\ddc \phi|_{\omega_{\rm orb}}\leq C< \infty
\end{equation}
since basically the function $\phi$ is a well-chosen potential for $\omega_{\rm orb}$ multiplied with a truncation function.
\medskip

\noindent  Let $(\chi_\ep)_{\ep> 0}$ be the usual family of truncation functions, converging to the characteristic function of 
$X\setminus Z$. A first remark is that we have the equality 
\begin{equation}\label{bmy5}  
\int_{X}\Delta (\cF, h)\wedge\big(\omega_{\rm orb}+ \sqrt{-1}\ddbar\phi\big)= \int_{X}\chi_\ep\Delta (\cF, h)\wedge\big(\omega_{\rm orb}+ \sqrt{-1}\ddbar\phi\big),
\end{equation}
for all $\ep\ll 1$. Moreover, the first term of the RHS of \eqref{bmy5} 
\begin{equation}
\int_{X}\chi_\ep\Delta (\cF, h)\wedge\omega_{\rm orb} \geq 0
\end{equation}
is positive, as consequence of \eqref{pointHE} and \eqref{pointHE1}.

\smallskip

\noindent The next identity we have to establish is
\begin{equation}\label{ymb1}
\int_{X} \chi_\ep \Delta (\cF,h) \wedge 
	\ddbar \phi= -\int_{X}\partial \phi\wedge \dbar\chi_\ep \wedge \Delta (\cF, h)
\end{equation}
for every positive $\ep$. This of course looks like an obvious consequence of Stokes theorem, but it is not so immediate since we are integrating over a non-compact domain singular forms.
In our specific situation, it holds thanks to \eqref{bmy-2} combined to the point (2) of Theorem \ref{Chern1}.
\smallskip

\noindent As in the proof of Theorem \ref{Chern1}, it is easy to see that
\begin{equation}\label{bmy4}  
	\sup_{X_{\rm reg}}|\partial \phi\wedge\dbar \chi_\ep|_{\omega_{\rm orb}}\leq C
\end{equation}
by a direct calculation, and on the other hand we claim that there exists a constant $C> 0$ such that 
\begin{equation}\label{bmy8} 
\Delta (\cF,h) \wedge \omega_{\rm orb} \leq C |\Theta(\cF, h)|^2dV_{\omega_{\rm orb}}
\end{equation}
at each point of $X_{\rm reg}$ -- this has already been verified in Section 5.  
\medskip

\noindent In conclusion, we have proved that 
\begin{equation}\label{ymb-3}
\lim_{\ep\to 0}\int_{X} \chi_\ep \Delta (\cF, h) \wedge 
	\ddbar \phi= 0
\end{equation}
so Theorem \ref{part-11} is established.
\end{proof}
\medskip

\begin{remark}
We assume that, instead of choosing $\phi$ such that $\omega_{\rm orb}+ \sqrt{-1}\ddbar\phi$ has compact support we impose the 
following condition: \emph{for any point $x\in X_{\rm sing}$ contained in the singular loci of $X$ we have}
$$\frac{\phi}{\sum |f_j|^2}\in \CC^\infty(X, x)$$
where $(f_j)$ is the ideal corresponding to $X_{\rm sing}$. Then the arguments used for Theorem \ref{part-11} show that 
$$\int_{X\setminus \Sigma}\Delta(\cF, h_\cF)\wedge (\omega_{\rm orb}+ \ddc\phi)\geq 0$$ holds true. 
\end{remark}

\medskip

\noindent We end this section with the proof of Theorem \ref{Chern2}.
\smallskip

\begin{proof}
It may be that Theorem \ref{Chern2} is an important step towards the general case. Indeed, it is expected that any klt K\"ahler space $X$ admits a so-called \emph{partial resolution of singularities},
i.e. there exists a birational map 
\begin{equation}
\pi: \wt X\to X
\end{equation}
such that $\wt X$ has only quotient singularities and $\pi$ is an isomorphism over the complement of the non-orbifold loci of $X$
(i.e. over the complement of a finite number of points). 
\smallskip

\noindent In the context of Theorem \ref{Chern2} this is a hypothesis, and consider the inverse image sheaf 
\[\wt \cF:= \big(\pi^\star(\cF)/ {\rm Tor}\big)^{\star\star}.\] 
\smallskip

\noindent It has two important properties that we next discuss, namely:
\begin{itemize}
\smallskip 

\item It is isomorphic with the $\pi$-inverse image of $\cF$ over the orbifold subset of $X$.
\smallskip 

\item $\wt \cF$ admits a $\mathbb Q$-sheaf structure in the complement of a finite number of points $(p_i)_{i=1,\dots, N}\subset \wh X$ of $\wh X$.
\end{itemize}
To see this, we first consider $\wh \cF:= \pi^\star (\cF)/\Tor $, and let $0$ be a point in the support of the exceptional divisor of $\pi$, together with the 
local uniformization \[\tau: U\to V= U/\Gamma.\]
The double dual of the inverse image $\tau^\star \wh \cF$ is a reflexive sheaf $\mathcal F_U$ on a open subset of $\mathbb C^3$, hence it is a vector bundle 
in the complement of a finite set of points. We then take the $\Gamma$-invariant direct image of $\mathcal F_U$,
which coincides with  $\wh \cF$ outside a set of codimension at least two. Therefore, the bi-dual of $\wh \cF$ is isomorphic with
the $\Gamma$-invariant direct image of $\mathcal F_U$, which shows that the bullets above hold true.
\smallskip

\noindent Let $\omega_{\wh X}$ be a metric with orbifold singularities on $\wh X$. Then for each positive $\ep> 0$ the form
\begin{equation}\label{coho6}
\omega_\ep:= \pi^\star\omega_X+ \ep\omega_{\wh X}
\end{equation}
 is a metric with quotient singularities.
  
Next we see that $\wh \cF$ is stable with respect to $\omega_\ep$, as soon as the parameter $\ep\ll 1$ is small enough. 
Let $h_\ep$ be the corresponding HE metric constructed in Theorem \ref {HEquot}. Then we have the inequality
\begin{equation}\label{coho7}
\int_{\wh X_0}\Delta (\wh \cF, h_\ep)\wedge\omega_{\ep}\geq 0
\end{equation}
 which follows by the usual differential geometry calculation, combined with the estimates in Theorem \ref {HEquot}
 in order to justify the convergence of the integral in \eqref{coho7}. The main point is to show that the equality
\begin{equation}\label{coho5}
\int_{\wh X}\Delta (\wh \cF, h_\ep)\wedge(\omega_{\ep}+ \sqrt{-1}\ddbar\phi)= \int_{\wh X}\Delta (\wh \cF, h_\ep)\wedge \omega_{\ep}
\end{equation}
holds for any function $\phi$ which is smooth in orbifold sense. Modulo the use of local uniformizations, the argument to establish 
\eqref{coho5} is the same as the one we used in Theorem \ref{Chern1}, so we will not reproduce it here. 
\smallskip

\noindent In particular, \eqref{coho5} holds for a function $\phi$ such that $\omega_{\ep}+ \sqrt{-1}\ddbar\phi$ is equal to zero
locally near the points $p_i$. Then we have the equality
\begin{equation}\label{coho9}
\int_{\wh X}\Delta (\wh \cF, h_\ep)\wedge\big(\omega_{\ep}+ \sqrt{-1}\ddbar\phi\big)= 
\int_{\wh X}\Delta (\wh \cF, h_1)\wedge\big(\omega_{\ep}+ \sqrt{-1}\ddbar\phi\big)
\end{equation}
justified by the fact that the form $\omega_{\ep}+ \sqrt{-1}\ddbar\phi$ has compact support contained in the 
open subset of $\wh X$ where $\wh \cF$ is a $\mathbb Q$--sheaf. 

\noindent Next we write 
\[\omega_{\ep}+ \sqrt{-1}\ddbar\phi= \pi^\star(\omega_X+ \sqrt{-1}\ddbar\psi)+ \ep\eta,\]
where $\psi$ is chosen in such a way that $\omega_X+ \sqrt{-1}\ddbar\psi$ has support in a 
compact set $K\subset X$ contained in the orbifold loci of $X$. Moreover, we choose $\eta$ with compact support in 
a set $\wh K$ contained in the 
complement of $(p_i)_i$'s.  All in all, we obtain the inequality
\begin{equation}\label{coho10} 
\int_{\wh X}\Delta (\wh \cF, h_1)\wedge\pi^\star\big(\omega_{X}+ \sqrt{-1}\ddbar\psi\big)\geq -\ep 
\int_{\wh X}\Delta (\wh \cF, h_1)\wedge \eta
\end{equation}
and as a last step we are taking the limit in \eqref{coho10} as $\ep\to 0$.
\end{proof}

\medskip


\begin{thebibliography}{6666666}
 
\bibitem[BS94]{BS94}  S. Bando and Y.-T. Siu. : {\em Stable sheaves and Einstein-Hermitian metrics. } Geometry and analysis on complex manifolds, 39–50, World Sci. Publ., River Edge, NJ, 1994.

\bibitem[CHP16]{CHP16} F. Campana, A. Höring and Th. Peternell {\em Abundance for Kähler threefolds}
Annales de l'ENS, Volume 49(3), 971-105, 2016.

\bibitem[CHP21]{CHP22} F. Campana, A. Höring and Th. Peternell {\em Addendum to the paper: Abundance for K\"ahler threefolds},
http://math.unice.fr/\~{}hoering/articles/addendum-abundance.pdf, 2021.

\bibitem[CP22]{CP22} J. Cao, M. Paun {\em On the Ohsawa-Takegoshi extension theorem,} arXiv:2002.0496.

\bibitem[Chen23]{Chen23} X. Chen {\em Admissible Hermitian-Yang-Mills connections over normal varieties}, arXiv:2205.12305.

\bibitem[CW]{CW} X. Chen and R. A. Wentworth {\em A Donaldson-Uhlenbeck-Yau theorem for normal varieties and semistable bun-
dles on degenerating families}, arXiv preprint arXiv:2108.06741 (2021)

\bibitem[CT23]{CT23} A. Clarke, C. Tipler {\em Blowing-up Hermitian Yang--Mills connections}, arXiv:2301.00525.

\bibitem[CGG]{CGG}  B. Claudon, P. Graf, H. Guenancia {\em Equality in the Miyaoka-Yau inequality and uniformization of non-positively curved klt pairs} arXiv:2305.04074, 2023. 

\bibitem[Dem]{Dem}  J.-P. Demailly {\em Complex analytic and differential geometry.} https://www-fourier.ujf-grenoble.fr/~demailly/manuscripts/agbook.pdf 

\bibitem[Dem92]{Dem92}  J.-P. Demailly {\em Regularization of closed positive currents and Intersection Theory,} J. Alg. Geom. 1 (1992) 361-409.

\bibitem[Dem09]{Dem09}  J.-P. Demailly {\em Analytic methods in algebraic geometry,} D17 at https://www-fourier.ujf-grenoble.fr/~demailly/documents.html

\bibitem[Div16]{Div16}  S. Diverio: {\em Segre forms and Kobayashi-Lübke inequality.} Math. Z. 283 (2016), no. 3-4, 1033–1047.

\bibitem[Don85]{Don85}  S.K. Donaldson: {\em Anti Self-Dual Yang-Mills Connections Over Complex Algebraic Surfaces and Stable Vector Bundles} Proceedings of the London Mathematical Society Volumes3-50, Issue1, January 1985.

 \bibitem[EGZ09]{EGZ09} Ph. Eyssidieux, V. Guedj and A. Zeriahi :{\em Singular Kähler-Einstein metrics} J. Amer. Math. Soc. 22 (2009), 607-639.
 
 \bibitem[Fau]{Fau} M. Faulk: {\em Hermitian-Einstein metrics on stable vector bundles over compact Kähler orbifolds}, arXiv:2202.08885, 2022.

\bibitem[GR70]{GR70}   H. Grauert; O. Riemenschneider: {\em Verschwindungssätze für analytische Kohomologiegruppen auf komplexen Räumen}
Inventiones mathematicae (1970) Volume: 11, page 263-292

\bibitem[GKKP11]{GKKP11} D. Greb, S. Kebekus, S. Kovacs and Th. Peternell: {\em Differential forms on log canonical spaces} Publications mathématiques de l'IHÉS volume 114, pages87–169 (2011).

\bibitem[GK20]{GK20}  P. Graf and T. Kirschner
{\em Finite quotients of three-dimensional complex tori} Annales de l'Institut Fourier,  Tome 70, no 2 (2020), p. 881-914. 

\bibitem[GPS21]{Phong1} B. Guo, D.H. Phong, and F. Tong, : {\em On $L^{\infty}$ estimates for complex Monge-Ampère
equations,} arXiv:2106.02224.

\bibitem[GPS22]{Phong} Guo, B., Phong, D.H, Sturm, J. : {\em Green's functions and complex Monge-Ampère equations.}
arXiv:2202.04715, 2022.

\bibitem[GPSS]{GPSS} Guo, B., Phong, D.H, Song, J.,  Sturm, J. : {\em Sobolev inequalities on Kähler spaces.}
arXiv:2311.00221, 2023.

 \bibitem[JCS22]{Jon} M. Jonsson, N. McCleerey, S. Shivaprasad: {\em Geodesic Rays in the Donaldson--Uhlenbeck--Yau Theorem,}
arXiv:2210.09246, 2022. 

\bibitem[KM92]{KollarMori92} J. Koll\'ar, S. Mori: {\em Classification of Three-Dimensional Flips,} J.~Amer.~Math.~Soc.~5 (1992), no.~3, pp.~533--703.

\bibitem[Kol98]{Kol98} S. Kołodziej,: {\em The complex Monge-Ampère equation,} Acta Math. 180 (1998), no. 1, 69–117

\bibitem[LT19]{LT19}  C. Li and G. Tian,: {\em Orbifold regularity of weak Kähler-Einstein metrics}, Advances in complex geometry , 169-178, Contemp. Math., 735, Amer. Math. Soc. 2019.

\bibitem[LüTe95]{LüTe95} Lübke, M. and A. Teleman,: {\em The Kobayashi-Hitchin correspondence,} World Sci. Publ., River Edge, NJ (1995).

\bibitem[MS73]{MS73} J. H. Michael, L. M. Simon {\em Sobolev and Mean-Value Inequalities on
Generalized Submanifolds of $\mathbb R^n$,} Comm. Pure and Applied Maths, (1973), 361-379.

\bibitem[O]{O} W. Ou: {\em Orbifold modifications of complex analytic varieties}, arXiv:2401.07273, 2014. 

\bibitem[Ros68]{Ros68} H. Rossi,: {\em Picard variety of an isolated singular point,} Rice Univ. Studies 54 (1968), no.
4, 63–73.

\bibitem[RuSe17]{RuSe17}  J. Ruppenthal, M. Sera {\em Modifications of torsion-free coherent analytic sheaves,} arXiv:1308.3973, 
 Annales de l'Institut Fourier, 67 no. 1 (2017), p. 237-265. 

\bibitem[Sim88]{Sim88}  C. Simpson {\em Constructing Variations of Hodge Structure Using Yang-Mills Theory and Applications to Uniformization} Journal of the American Mathematical Society  Vol. 1, No. 4 (Oct., 1988), pp. 867-918

\bibitem[Siu87]{Siu87}  Y. -T. Siu {\em Lectures on Hermitian-Einstein Metrics for Stable Bundles and Kähler-Einstein Metrics} Birkäuser Verlag, Basel, (1987).

\bibitem[TZ06]{TZ06}  G. Tian and Z. Zhang, {\em On the Kähler-Ricci flow on projective manifolds of general type,} Chinese Ann. Math. Ser. B 27 (2006), no. 2, 179–192.

\bibitem[Tom19]{Tom19} M. Toma, {\em Bounded sets of sheaves on relative analytic spaces,} Annales Henri Lebesgue, Volume~4~(2021), pp.~1531--1563

\bibitem[UY86]{UY86}
 Uhlenbeck, K.; Yau, S.-T. {\em On the existence of Hermitian-Yang-Mills connections in stable vector bundles. }  Comm. Pure Appl. Math. 39 (1986), no. S, suppl., S257–S293. 
\end{thebibliography}
\end{document}